\theoremstyle{plain}
\newtheorem{theorem}{Theorem}
\newtheorem{lemma}[theorem]{Lemma}
\newtheorem{proposition}[theorem]{Proposition}
\newtheorem{corollary}[theorem]{Corollary}
\theoremstyle{definition}
\newtheorem{definition}[theorem]{Definition}
\newtheorem{remark}[theorem]{Remark}
\numberwithin{theorem}{subsection}
\numberwithin{equation}{subsection}
\newcommand{\C}{\mathbb{C}}
\newcommand{\HH}{\mathbb{H}}
\newcommand{\R}{\mathbb{R}}
\newcommand{\Z}{\mathbb{Z}}
\newcommand{\GL}{\mathrm{GL}}
\newcommand{\Ad}{\mathrm{Ad}}
\newcommand{\ad}{\mathrm{ad}}
\newcommand{\VSO}{\mathrm{VSO}}
\newcommand{\im}{\operatorname{Im}}
\newcommand{\re}{\operatorname{Re}}
\newcommand{\fg}{\mathfrak{g}}
\newcommand{\fh}{\mathfrak{h}}
\newcommand{\gl}{\mathfrak{gl}}
\newcommand{\cS}{\mathcal{S}}
\newcommand{\cT}{\mathcal{T}}
\newcommand{\cA}{\mathcal{A}}
\newcommand{\cF}{\mathcal{F}}
\newcommand{\mfF}{\mathfrak{F}}
\begin{document}

\title[Heisenberg on Siegel and Bergman spaces]{The Heisenberg group action on the Siegel domain and the structure of Bergman spaces}

\author{Julio A. Barrera-Reyes}
\address{Centro de Investigaci\'on en Matem\'aticas, Guanajuato, M\'exico}
\email{julio.barrera@cimat.mx}

\author{Ra\'ul Quiroga-Barranco}
\address{Centro de Investigaci\'on en Matem\'aticas, Guanajuato, M\'exico}
\email{quiroga@cimat.mx}

\subjclass{Primary 32A36, 30H20, 22E25; Secondary 47B35, 53D20}

\keywords{Heisenberg group, Siegel domain, Bergman spaces, Fock spaces, Toeplitz operators}

\begin{abstract}
	We study the biholomorphic action of the Heisenberg group $\mathbb{H}_n$ on the Siegel domain $D_{n+1}$ ($n \geq 1$). Such $\mathbb{H}_n$-action allows us to obtain decompositions of both $D_{n+1}$ and the weighted Bergman spaces $\mathcal{A}^2_\lambda(D_{n+1})$ ($\lambda > -1$). Through the use of symplectic geometry we construct a natural set of coordinates for $D_{n+1}$ adapted to $\mathbb{H}_n$. This yields a useful decomposition of the domain $D_{n+1}$. The latter is then used to compute a decomposition of the Bergman spaces $\mathcal{A}^2_\lambda(D_{n+1})$ ($\lambda > -1$) as direct integrals of Fock spaces. This effectively shows the existence of an interplay between Bergman spaces and Fock spaces through the Heisenberg group $\mathbb{H}_n$. As an application, we consider $\mathcal{T}^{(\lambda)}(L^\infty(D_{n+1})^{\mathbb{H}_n})$ the $C^*$-algebra acting on the weighted Bergman space $\mathcal{A}^2_\lambda(D_{n+1})$ ($\lambda > -1$) generated by Toeplitz operators whose symbols belong to $L^\infty(D_{n+1})^{\mathbb{H}_n}$ (essentially bounded and $\mathbb{H}_n$-invariant). We prove that $\mathcal{T}^{(\lambda)}(L^\infty(D_{n+1})^{\mathbb{H}_n})$ is commutative and isomorphic to $\mathrm{VSO}(\mathbb{R}_+)$ (very slowly oscillating functions on $\mathbb{R}_+$), for every $\lambda > -1$ and $n \geq 1$. 
\end{abstract}

\maketitle


\section{Introduction} 
\label{sec:intro}
Analytic function spaces and their operators is a current and very active research topic. Among the classical examples of these we have the weighted Bergman spaces over the Siegel domain, the natural unbounded realization of the complex unit ball. Equally important are the Toeplitz operators acting on such Bergman spaces. The two of them together yield concrete examples of Hilbert spaces and $C^*$-algebras of operators acting on them. In fact, it has been very fruitful to study these objects towards a better understanding of both complex analysis and operator theory.

A Toeplitz operator comes from the choice of a (usually essentially bounded) measurable function called a symbol. Choosing a family of such symbols then yields a $C^*$-algebra: the one generated by the corresponding Toeplitz operators. To understand the structure of these $C^*$-algebras in terms of the choice of symbols is at the core of the study of Toeplitz operators. For example, on strictly pseudoconvex bounded domains with smooth boundary the $C^*$-algebra generated by Toeplitz operators whose symbols are continuous up to the boundary is essentially commutative (see~\cite[Theorem~4.1.25]{UpmeierToepBook}). 

It is by now well known that geometric restrictions on the families of symbols yield Toeplitz operators that generate actually commutative $C^*$-algebras. On the unit disk, it was proved in \cite{KorenblumZhu1995} that radial symbols (depending only on the radial variable of polar coordinates) yield commutative $C^*$-algebras. More recent works have shown that this phenomenon is part of a more general principle: on bounded symmetric domains, families of symbols invariant under suitable biholomorphism groups yield Toeplitz operators that generate commutative $C^*$-algebras. Beyond the seminal work \cite{KorenblumZhu1995} this was proved on the upper half-plane for dilations and translations (see \cite{GKVHyperbolic,GKVParabolic}) and for the so-called maximal Abelian subgroups of biholomorphisms on the unit ball and the Siegel domain (see \cite{QVUnitBall1}). Later on, a large family of biholomorphism groups acting on bounded symmetric domains that yield commutative $C^*$-algebras through invariant symbols were exhibited in \cite{DOQJFA}. This line of work is also related to the understanding of the properties of commutators and semicommutators of Toeplitz operators. For this we refer to \cite{AppuhamyLe2016,AxlerCuckovicRao2000,ChoeKooLee2004,CuckovicLouhichi2008,Le2017} as examples.

Within the scope of this setup, we consider for $n \geq 1$ the $n+1$-dimensional Siegel domain $D_{n+1}$ and the biholomorphic action of the Heisenberg group $\HH_n$ on $D_{n+1}$. We study the symbols on $D_{n+1}$ that are $\HH_n$-invariant and the $C^*$-algebras generated by the corresponding Toeplitz operators on the weighted Bergman spaces. We will denote such $C^*$-algebras by $\cT^{(\lambda)}(L^\infty(D_{n+1})^{\HH_n})$, where $\lambda > -1$. We refer to Definition~\ref{def:two-types-symbols} and subsection~\ref{subsec:Toeplitz-HHn-invariant} where this notation is introduced.

With respect to the Toeplitz operators with $\HH_n$-invariant symbols we obtain in Theorem~\ref{thm:Heisenberg-Toeplitz-multiplier} simultaneous diagonalizing expressions given by multiplier operators acting on $L^2(\R_+)$. In particular we have a proof, by explicit diagonalization, of the commutativity of the $C^*$-algebras $\cT^{(\lambda)}(L^\infty(D_{n+1})^{\HH_n})$, for every $\lambda > -1$ and $n \geq 1$.

A striking fact of the diagonalizing formulas obtained in Theorem~\ref{thm:Heisenberg-Toeplitz-multiplier} is that, although they depend on the weight $\lambda$ as expected, they do not depend on $n$ and so they are the same for all Siegel domains $D_{n+1}$ where $n \geq 1$. We state this fact in Theorem~\ref{thm:Heisenberg-Toeplitz-C*-commutative} which yields another of our main results: for all $\lambda > -1$, the $C^*$-algebras $\cT^{(\lambda)}(L^\infty(D_{n+1})^{\HH_n})$ are all isomorphic, independently of $n$, to a fixed $C^*$-algebra acting on $L^2(\R_+)$. 

As for the weight $\lambda > -1$, we prove that the isomorphism class of the $C^*$-algebras $\cT^{(\lambda)}(L^\infty(D_{n+1})^{\HH_n})$ is independent of $\lambda$ as well. Our Theorem~\ref{thm:Toeplitz-Hn-VSO} proves that the $C^*$-algebras $\cT^{(\lambda)}(L^\infty(D_{n+1})^{\HH_n})$ are all isomorphic to $\VSO(\R_+)$, the algebra of very slowly oscillating functions on $\R_+$ (see Definition~\ref{def:VSO}), acting on $L^2(\R_+)$. The reason is that the diagonalizing formulas \eqref{eq:gamma-for-Heisenberg} obtained in Theorem~\ref{thm:Heisenberg-Toeplitz-multiplier} yield exactly the same family of functions found in \cite{HHM2014VerticalWeighted,HMV2013Vertical} for Toeplitz operators with vertical symbols acting on the weighted Bergman spaces over the upper half-plane. Since the $C^*$-algebras generated by the latter sort of Toeplitz operators have been proved in \cite{HHM2014VerticalWeighted} to be isomorphic to $\VSO(\R_+)$ acting on $L^2(\R_+)$, we thus conclude that $\cT^{(\lambda)}(L^\infty(D_{n+1})^{\HH_n})$ have such isomorphism class as well for every $\lambda > -1$ and $n \geq 1$. For further details on this reasoning and its relevance we refer to Remarks~\ref{rmk:Heisenberg-Toeplitz-C*-commutative} and \ref{rmk:Toeplitz-Hn-VSO}. The latter prove that the most natural generalization of the horizontal translations on the upper half-plane is the action of the Heisenberg group $\HH_n$ on $D_{n+1}$; this is so from the viewpoint of Toeplitz operators with suitably invariant symbols and the $C^*$-algebras that they generate.

To obtain the theorems described above we prove in Theorem~\ref{thm:Rlambda-Ulambda} a result which is interesting on its own: the existence, for every $\lambda > -1$ and $n \geq 1$, of a unitary map
\[
	U_\lambda : 
	\cA^2_\lambda(D_{n+1}) \longrightarrow
		\int_{\R_+}^\oplus \cF^2_{2\xi}(\C^n) \dif \xi,
\]
that decomposes the weighted Bergman spaces $\cA^2_\lambda(D_{n+1})$ as a direct integral of Fock spaces $\cF^2_{2\xi}(\C^n)$, for $\xi \in \R_+$. This decomposition is the consequence of our construction of a Segal-Bargmann type transform on the $L^2$-spaces containing the weighted Bergman spaces. This sort of construction usually requires a careful choice of coordinates on the domain in question. For this we resort to Lie theory and symplectic geometry, which are natural tools given the action of the Heisenberg group.

We consider in Section~\ref{sec:sympgeom-Siegel} the notion of moment maps for symplectic actions on K\"ahler manifolds and compute the moment map for the action of the Heisenberg group $\HH_n$ on $D_{n+1}$, as well as for some of its subgroups which we classify for the connected case. Next, we use this to define moment map symbols (see Definition~\ref{def:moment-map-symbol} and \cite{QSMomentMapJFA}). It is proved in Proposition~\ref{prop:Hn-orbits-moment-map-center} that the moment map symbols corresponding to the center of $\HH_n$ are precisely the $\HH_n$-invariant symbols. We define in subsection~\ref{subsec:group-moment-coordinates} a set of coordinates on $D_{n+1}$ that are very well adapted to the action of $\HH_n$. Furthermore, these coordinates are defined using both the action of $\HH_n$ and the moment map of the center of $\HH_n$. They turn out to be quite natural since they greatly simplify the construction of our Segal-Bargmann type transform. For example, we need to take Fourier transform on a single variable only: the central coordinate of $\HH_n$. As a comparison the Segal-Bargmann transform obtained in \cite{QVUnitBall1} for the nilpotent MASG (maximal Abelian subgroup) required to take Fourier transform in as many coordinates as the dimension of the Siegel domain. Also to compare previous techniques with ours we show in subsection~\ref{subsec:nilpotent-MASG} how to obtain the diagonalizing formulas \eqref{eq:gamma-for-Heisenberg} using the nilpotent MASG. This is possible due to the fact that for symbols $\HH_n$-invariance implies invariance under the nilpotent MASG (see Corollary~\ref{cor:Heisenberg-inv-nilpMASG-inv}). As noted in Remark~\ref{rmk:Heisenberg-vs-nilpotentMASG} our methods are more simple and clear than going through the nilpotent MASG. Most importantly, our Lie theoretic and symplectic methods provide more information and insight of the Toeplitz operators with $\HH_n$-invariant symbols.

\section{Analysis on the Siegel domain}
\label{sec:Siegel}
\subsection{Bergman spaces and Toeplitz operators}
In the rest of this work we will consider the complex vector space $\C^{n+1}$ and assume that $n \geq 1$. Furthermore, any vector $z \in \C^{n+1}$ will be decomposed as $z = (z',z_{n+1})$ where $z' \in \C^n$ and $z_{n+1} \in \C$. The Siegel domain in $\C^{n+1}$ is defined~by
\[
	D_{n+1} = \{ z \in \C^{n+1} \mid \im(z_{n+1}) > |z'|^2 \},
\]
which is well known to be biholomorphically equivalent to the $(n+1)$-dimensional unit ball through the Cayley transform. In particular, $D_{n+1}$ is a symmetric domain that yields the unbounded realization of the $(n+1)$-dimensional unit ball.

We will denote by $\dif z$ the Lebesgue measure on $\C^{n+1}$, and for every $\lambda > -1$ we will consider the measure $v_\lambda$ on $D_{n+1}$ given by 
\[
	\dif v_\lambda(z) 
		= \frac{c_\lambda}{4} \big(\im(z_{n+1}) - |z'|^2\big)^\lambda \dif z,
\]
where the constant $c_\lambda$ is defined as
\[
	c_\lambda = \frac{\Gamma(\lambda + n + 2)}{\pi^{n+1} \Gamma(\lambda + 1)}.
\]
The weighted Bergman space on the Siegel domain $D_{n+1}$ with weight $\lambda > -1$ is denoted by $\cA^2_\lambda(D_{n+1})$ and consists of the holomorphic functions on $D_{n+1}$ that belong to $L^2(D_{n+1}, v_\lambda)$. It is well known that $\cA^2_\lambda(D_{n+1})$ is a closed subspace of $L^2(D_{n+1},v_\lambda)$ and that its associated orthogonal projection $B_{n+1,\lambda} : L^2(D_{n+1},v_\lambda) \rightarrow \cA^2_\lambda(D_{n+1})$ satisfies
\[
	B_{n+1,\lambda}(f)(z) 
			= \int_{D_{n+1}} f(w) K_{n+1,\lambda}(z,w) \dif v_\lambda(w),
\]
where the function $K_{n+1,\lambda} : D_{n+1} \times D_{n+1} \rightarrow \C$ is given by 
\[
	K_{n+1,\lambda}(z,w) 
		= \frac{1}{\Big(\displaystyle\frac{z_{n+1} - \overline{w}_{n+1}}{2i} - z' \cdot \overline{w'}\Big)^{\lambda + n + 2}}.					
\]
The projection $B_{n+1,\lambda}$ and the function $K_{n+1,\lambda}$ are called the Bergman projection and the Bergman kernel of $D_{n+1}$, respectively.

For every $a \in L^\infty(D_{n+1})$, we will denote by $T^{(\lambda)}_a = T_a$ the Toeplitz operator acting on the Bergman space $\cA^2_\lambda(D_{n+1})$, which is defined as the compression of the multiplier operator operator $M_a$. In other words, we have 
\[
	T^{(\lambda)}_a = B_{n+1,\lambda} M_a|_{\cA^2_\lambda(D_{n+1})}.
\]

\subsection{The Heisenberg group}
In this section we describe our main Lie theory tools: the Heisenberg group and its action on the Siegel domain. This can be considered as a particular case of the quasi-translations studied in \cite[Page~27]{UpmeierToepBook}. Here we state some properties that 
will be used latter on, we prove some of them for the sake of completeness and refer to \cite{UpmeierToepBook} for further details. We note that our Siegel domain may be seen as a generalized upper half-space, while \cite[Example~1.3.72]{UpmeierToepBook} considers what may be described as a left-hand side half-space. Nevertheless, a simply linear biholomorphism relates both realizations.

Let us consider in $\C^n \times \R$ the product given by
\[
	(z',s) \cdot (w', t) 
		= (z'+ w', s + t + 2 \im(z'\cdot \overline{w'})),
\]
where $z',w' \in \C^n$ and $s,t \in \R$. It is straightforward to check that $\C^n \times \R$ endowed with this operation becomes a Lie group, which we will denote by $\HH_n$ and will be called the \textbf{Heisenberg group}. Note that $(0,0)$ is the identity element and that we also have $(z',s)^{-1} = (-z',-s)$ for this product.

The next result collects the very basic properties of the Heisenberg group.

\begin{proposition}\label{prop:Heisenberg-properties}
	For every $n \geq 1$, the Heisenberg group $\HH_n$ has Lie algebra given by $\fh_n = \C^n \times \R$ carrying the Lie brackets
	\[
		[(w',t),(z',s)] = (0, 4\im(w'\cdot \overline{z'})).
	\]
	The exponential map $\exp : \fh_n \rightarrow \HH_n$ is the identity map, and the adjoint representation $\Ad_{\HH_n} : \HH_n \rightarrow \GL(\fh_n)$ is given by
	\[
		\Ad_{\HH_n}(w',t)(z',s) = (z', s + 4 \im(w'\cdot \overline{z'})).
	\]
	Furthermore, the centers $Z(\HH_n)$ and $Z(\fh_n)$ of $\HH_n$ and $\fh_n$ are both given by $\{0\} \times \R \subset \C^n \times \R$, considered as a subgroup and as a subspace, respectively.
\end{proposition}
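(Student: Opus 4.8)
The plan is to verify each of the four claims directly from the group law, since the Heisenberg group is presented so explicitly that every computation reduces to tracking the real scalar component and the $\im(z' \cdot \overline{w'})$ terms. I would organize the proof around the four assertions in the order they are stated: the Lie bracket, the exponential map, the adjoint representation, and the center.

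First I would compute the Lie bracket. Since $\HH_n$ is realized on $\C^n \times \R$ with the polynomial group law $(z',s) \cdot (w',t) = (z'+w', s+t+2\im(z'\cdot\overline{w'}))$, the most efficient route is to use the group-commutator expansion: identifying $\fh_n$ with the tangent space at the identity $(0,0)$ (which is all of $\C^n \times \R$ since the underlying space is a vector space), one computes the second-order term of the commutator $g h g^{-1} h^{-1}$ for $g = (w',t)$, $h = (z',s)$. Because the nonlinearity in the product sits only in the central coordinate and is the bilinear form $2\im(z'\cdot\overline{w'})$, the bracket will come out as $(0, 4\im(w'\cdot\overline{z'}))$ after collecting the antisymmetric part; the factor $4$ arises as twice the difference of the two mixed terms. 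Alternatively one could differentiate one-parameter subgroups $t \mapsto t\cdot X$ directly, which is clean here precisely because $\exp$ is the identity.

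Next I would establish that $\exp : \fh_n \to \HH_n$ is the identity map. The key observation is that for a fixed $X = (z',s)$ the curve $t \mapsto tX = (tz', ts)$ is a one-parameter subgroup: one checks $(t_1 X)\cdot(t_2 X) = (t_1+t_2)X$ using the group law, noting that the cross term $2\im(t_1 z' \cdot \overline{t_2 z'}) = 2 t_1 t_2 \im(z'\cdot\overline{z'}) = 0$ because $z'\cdot\overline{z'} = |z'|^2$ is real. Hence the integral curve of the left-invariant vector field through $X$ passing through the origin at time $0$ is exactly $t \mapsto tX$, so $\exp(X) = X$. With $\exp$ identified, the adjoint representation is then $\Ad_{\HH_n}(g)X = \frac{d}{dt}\big|_{t=0}\, g\cdot(tX)\cdot g^{-1}$, and plugging in $g=(w',t)$, $X=(z',s)$ and using $g^{-1}=(-w',-t)$ gives the stated formula by a direct conjugation computation; only the central coordinate is affected, producing the shift $s + 4\im(w'\cdot\overline{z'})$, which is consistent with the bracket via $\Ad_{\exp X} = e^{\ad_X}$ since $\ad$ is two-step nilpotent here.

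Finally, for the centers I would compute directly. An element $(w',t)$ is central in $\HH_n$ iff $(w',t)\cdot(z',s) = (z',s)\cdot(w',t)$ for all $(z',s)$, which by the group law reduces to $2\im(w'\cdot\overline{z'}) = 2\im(z'\cdot\overline{w'})$ for all $z'$; since $\im(z'\cdot\overline{w'}) = -\im(w'\cdot\overline{z'})$, this forces $\im(w'\cdot\overline{z'}) = 0$ for all $z'\in\C^n$, hence $w'=0$, giving $Z(\HH_n) = \{0\}\times\R$. For the Lie algebra center, $(w',t) \in Z(\fh_n)$ iff $[(w',t),(z',s)] = (0, 4\im(w'\cdot\overline{z'})) = 0$ for all $(z',s)$, which again forces $w'=0$, so $Z(\fh_n) = \{0\}\times\R$ as well. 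None of these steps presents a genuine obstacle; the only point requiring mild care is bookkeeping the numerical constants (the factors $2$ and $4$) and the sign/antisymmetry of $\im(z'\cdot\overline{w'})$, so I would state once and for all that $\im(w'\cdot\overline{z'}) = -\im(z'\cdot\overline{w'})$ and reuse it throughout.
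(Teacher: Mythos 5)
Your proof is correct, and it takes a somewhat different route from the paper's on two of the four claims. Both arguments identify the exponential map in the same way (the curves $r \mapsto rX$ are one-parameter subgroups because the cross term $2\im(z'\cdot\overline{z'})$ vanishes); the divergence lies in the bracket and the adjoint representation. The paper expands the product $(rw',rt)\cdot(rz',rs)$ and invokes the Campbell--Baker--Hausdorff formula (Helgason's lemma) to read off $[X,Y]$ from the quadratic term, and then obtains $\Ad_{\HH_n}$ from the Lie-theoretic identity $\Ad_{\HH_n}(\exp X) = e^{\ad(X)} = I + \ad(X)$, valid because $\ad(X)^2 = 0$; thus in the paper the adjoint formula logically depends on the bracket computation. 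You work at the group level throughout: the bracket as the bilinear term of the group commutator $ghg^{-1}h^{-1}$ (which in this two-step nilpotent group equals $(0,4\im(w'\cdot\overline{z'}))$ exactly, as your bookkeeping of the two mixed terms shows), and $\Ad_{\HH_n}(g)X = \frac{\dif}{\dif t}\big|_{t=0}\, g\cdot(tX)\cdot g^{-1}$ by direct conjugation, with $\Ad_{\exp X} = e^{\ad_X}$ demoted to a consistency check. Your route is more elementary and self-contained (no citation of BCH, and the two computations are logically independent of each other), while the paper's is shorter once the standard identities are granted and displays the role of nilpotency explicitly. Finally, your treatment of the centers is more explicit than the paper's, which declares them ``clear from the formulas obtained''; your observation that $\im(w'\cdot\overline{z'}) = 0$ for all $z'$ forces $w' = 0$ (nondegeneracy of the standard symplectic form on $\C^n$) is precisely the point that justifies that claim, both for $Z(\HH_n)$ and for $Z(\fh_n)$.
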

\begin{proof}
	Since $\HH_n$ has the manifold structure given by a real vector space, its tangent space at the identity $(0,0)$ is $\C^n \times \R$. Hence, the latter yields the underlying vector space of the Lie algebra $\fh_n$. 
	
	From the definition of the product in $\HH_n$ it follows immediately that for every fixed $(z',s) \in \HH_n$, the map $\R \rightarrow \HH_n$ given by $r \mapsto (rz',rs)$ is a homomorphism of Lie groups. By definition of the exponential map of Lie groups, it follows that the exponential map of $\HH_n$ is indeed the identity map.
	
	On the other hand, for any $(z',s), (w',t) \in \HH_n$ we have
	\[
		(rw',rt) \cdot (rz',rs) 
			=  (r(w' + z'), r(t + s) + 2r^2\im(w'\cdot \overline{z'}))
	\]
	for every $r \in \R$. Hence, it follows from our computation of the exponential map and \cite[Chapter~II~Lemma~1.8]{Helgason} or the Campbell-Baker-Hausdorff formula (see~\cite[Page~669]{KnappBeyond2nd}) that the Lie brackets in $\fh_n$ are given by
	\[
		[(w',t),(z',s)] = (0, 4\im(w'\cdot \overline{z'})).
	\]
	In particular, the Lie algebra adjoint representation $\ad : \fh_n \rightarrow \gl(\fh_n)$ satisfies $\ad(X)^2 = 0$ for every $X \in \fh_n$.
	
	The previous remarks allow us to compute the Lie group adjoint representation $\Ad_{\HH_n} : \HH_n \rightarrow \GL(\fh_n)$ as follows
	\begin{align*}	
		\Ad_{\HH_n}(w',t) &= \Ad_{\HH_n}(\exp(w',t)) = e^{\ad(w',t)} \\
			&= I_{\fh_n} + \ad(w',t)
			= I_{\fh_n} + [(w',t), \cdot],
	\end{align*}
	where the second identity follows from elementary Lie theory. This yields the required formula for $\Ad_{\HH_n}$.
		
	Finally, the claims on the centers of $\HH_n$ and $\fh_n$ are clear from the formulas obtained.
\end{proof}

We will consider the $\HH_n$-action on $\C^{n+1}$ obtained from the next elementary result, whose proof we present for the sake of completeness.

\begin{proposition}\label{prop:Hn-action}
	The assignment $\HH_n \times \C^{n+1} \rightarrow \C^{n+1}$ defined by
	\[
		(w',t)\cdot z = (z' + w', z_{n+1} + t + 2i z'\cdot \overline{w'} + i |w'|^2).
	\]
	is a free $\HH_n$-action which is holomorphic and preserves the map on $\C^{n+1}$ given by $z \mapsto \im(z_{n+1}) - |z'|^2$. In particular, this $\HH_n$-action yields a holomorphic action on~$D_{n+1}$. 
\end{proposition}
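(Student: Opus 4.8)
The plan is to verify directly the four asserted properties, dealing with the action axioms first. First I would check that $(0,0)$ acts as the identity, which is immediate from the defining formula. The substantive point is the compatibility of the formula with the group product of $\HH_n$. To this end I would fix two elements $(w',t), (v',s) \in \HH_n$ and an arbitrary $z = (z',z_{n+1}) \in \C^{n+1}$, set $y = (v',s)\cdot z$, and expand both $(w',t)\cdot y$ and $\big((w',t)\cdot(v',s)\big)\cdot z$ componentwise. The $\C^n$-components agree trivially, since both equal $z' + v' + w'$. For the last components one compares $z_{n+1}$ plus a collection of correction terms; the one genuinely nontrivial identity needed is that the twisting term $2\im(w'\cdot\overline{v'})$ coming from the group law of $\HH_n$, together with the term $2i\re(w'\cdot\overline{v'})$ arising from expanding $i|w'+v'|^2$, recombine as $2i\,v'\cdot\overline{w'}$, which is precisely the cross term produced by composing the two maps. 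This is the step I expect to be the main obstacle, not conceptually but bookkeeping-wise: one must carefully separate real and imaginary parts and invoke $\overline{w'\cdot\overline{v'}} = v'\cdot\overline{w'}$.

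Once the action axioms hold, freeness is immediate: if $(w',t)\cdot z = z$, then comparing $\C^n$-components forces $w' = 0$, and then the last components force $t = 0$, so the stabilizer of every point is trivial. Holomorphicity is equally direct: for each fixed $(w',t)$ the two components of the map $z \mapsto (w',t)\cdot z$ are affine functions of $(z',z_{n+1})$ in which no conjugate of a coordinate of $z$ occurs (the antiholomorphic-looking terms $2iz'\cdot\overline{w'}$ and $i|w'|^2$ are holomorphic in $z$ because $w'$ is held fixed), so each such map is holomorphic.

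It remains to show that $\Phi(z) := \im(z_{n+1}) - |z'|^2$ is preserved. Writing $\zeta = (w',t)\cdot z$, I would compute $\im(\zeta_{n+1})$ and $|\zeta'|^2$ separately. Using $t \in \R$ one obtains $\im(\zeta_{n+1}) = \im(z_{n+1}) + 2\re(z'\cdot\overline{w'}) + |w'|^2$, while $|\zeta'|^2 = |z' + w'|^2 = |z'|^2 + 2\re(z'\cdot\overline{w'}) + |w'|^2$. Subtracting, the two correction terms cancel and $\Phi(\zeta) = \Phi(z)$, as required. Finally, since $D_{n+1} = \{z : \Phi(z) > 0\}$ and $\Phi$ is $\HH_n$-invariant, each map $z \mapsto (w',t)\cdot z$ carries $D_{n+1}$ bijectively onto itself; being also holomorphic, its restriction yields a holomorphic $\HH_n$-action on $D_{n+1}$.
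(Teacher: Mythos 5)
Your proposal is correct and follows essentially the same route as the paper: a direct componentwise verification of the action axioms (the paper compresses this into ``a straightforward computation'' with a reference to Upmeier's book, whereas you spell out the key cross-term identity $2\im(w'\cdot\overline{v'}) + 2i\re(w'\cdot\overline{v'}) = 2i\,v'\cdot\overline{w'}$, which is exactly the bookkeeping step involved), followed by the same cancellation computation showing $\im(z_{n+1}) - |z'|^2$ is preserved. Your explicit treatment of freeness and holomorphicity is a welcome addition, since the paper's proof leaves those points implicit even though the statement asserts them.
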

\begin{proof}
	A straightforward computation using the product law in $\HH_n$ shows that the mapping in the statements yields indeed an action (see also \cite[Example~1.3.72]{UpmeierToepBook}). It is also clear that such action yields biholomorphisms of $\C^{n+1}$.
	
	On the other hand, we have for every $(w',t) \in \HH_n$ and $z \in \C^{n+1}$
	\begin{align*}
		\im\big(\big((w',t)\cdot z\big)_{n+1}\big) 
				&- \big|\big((w',t)\cdot z\big)'\big|^2 = \\
				&= \im(z_{n+1}) + 2 \im(iz'\cdot \overline{w'}) + |w'|^2 
					- |z' + w'|^2 \\
				&= \im(z_{n+1}) + 2 \re(z'\cdot \overline{w'}) + |w'|^2
					- |z' + w'|^2 \\
				&= \im(z_{n+1}) - |z'|^2,
	\end{align*}
	thus proving the $\HH_n$-invariance of the given function. In particular, the $\HH_n$-action preserves $D_{n+1}$ and so defines a biholomorphic action on it.
\end{proof}

\begin{remark}\label{rmk:Hn-NilpotentAction}
	It is clear that the subset $\R^{n+1} = \R^n \times \R \subset \HH_n$ is a closed Lie subgroup. Moreover, the corresponding $\R^{n+1}$-action on $D_{n+1}$ yields precisely one of the maximal Abelian subgroups (MASGs) of biholomorphisms studied in \cite{QVUnitBall1,QVUnitBall2}, namely the so-called nilpotent MASG.
\end{remark}

\section{Symplectic geometry on the Siegel domain}
\label{sec:sympgeom-Siegel}
\subsection{Symplectic manifolds and moment maps}
Our main geometric tool will be symplectic geometry and the computation of moment maps of symplectic actions. Hence, we review some definitions and properties. Further details can be found in \cite{QSMomentMapJFA} and the references therein.

A symplectic manifold is a manifold $M$ carrying a non-degenerate closed $2$-form $\omega$. In this case, $\omega$ is called the symplectic form of $M$, and the pair $(M,\omega)$ will be used to denote the symplectic manifold when emphasis on the $2$-form is required. The geometric symmetries of a symplectic manifold are given by the so-called symplectomorphisms. These are diffeomorphisms $\varphi : M \rightarrow M$ that preserve $\omega$. More precisely, we require that $\omega_z(u,v) = \omega_{\varphi(z)}(\dif\varphi_z(u), \dif\varphi_z(v))$, for every $z \in M$ and $u, v \in T_z M$.

For a given smooth action of a Lie group $H$ on a manifold $M$ we have a natural map from $\fh$, the Lie algebra of $H$, to the Lie algebra of smooth vector fields on $M$. More precisely, for any $X \in \fh$ we will denote by $X^\sharp$ the vector field given by
\[
	X^\sharp_z = \frac{\dif}{\dif r}\sVert[2]_{r=0} \exp(rX)\cdot z,
\]
for every $z \in M$. It is easily seen that this map defines an anti-homomorphism of Lie algebras (see \cite[Chapter~II~Theorem~3.4]{Helgason}).

If $H$ is a Lie group acting smoothly on a symplectic manifold $M$, then we will say that the $H$-action is symplectic when every element of $H$ yields a symplectomorphism of $M$. When such an action is given we have the fundamental notion of moment map.

\begin{definition}\label{def:moment-map}
	Let $H$ be a Lie group with Lie algebra $\fh$ and denote by $\fh^*$ the dual vector space of the latter. For a symplectic $H$-action on a symplectic manifold $(M,\omega)$, a moment map for the $H$-action is a smooth map $\mu : M \rightarrow \fh^*$ for which the following properties hold.
	\begin{enumerate}
		\item For every $X \in \fh$, the function $\mu_X : M \rightarrow \R$ given by $\mu_X(z) = \langle \mu(z), X\rangle$ satisfies
		\[
			\dif \mu_X = \omega(X^\sharp, \cdot).
		\]
		\item The map $\mu : M \rightarrow \fh^*$ is $H$-equivariant for the dual action of the adjoint representation on the target. In other words, we have
		\[
			\mu(h\cdot z) = \Ad_H^*(h)(\mu(z)),
		\]
		for every $z \in M$ and $h \in H$, where $\Ad_H^*(h) = \Ad_H(h^{-1})^\top$ is the dual map of $\Ad_H(h^{-1})$ and $\Ad_H : H \rightarrow \GL(\fh)$ denotes the adjoint representation of~$H$.
	\end{enumerate}
\end{definition}

\begin{remark}\label{rmk:moment-map}
	With the notation from Definition~\ref{def:moment-map}, if the group $H$ is Abelian, then the adjoint representation is trivial and the $H$-equivariance condition reduces to an $H$-invariance condition: $\mu(h\cdot z) = \mu(z)$ for every $z \in M$ and $h \in H$. But, for a general Lie group $H$ it is important to consider both the adjoint representation and its dual representation. On the other hand, to simplify computations it is useful to introduce some sort of identification between $\fh$ and $\fh^*$. When $\fh$ is considered just as a vector space, this is straightforward: any inner product on $\fh$ implements one such identification. However, one has to carry over the adjoint representation and so not every inner product will be adequate. The first alternative is to look for an inner product in $\fh$, positive definite or not, which is $\Ad_H(H)$-invariant. Then, the corresponding identification between $\fh$ and $\fh^*$ also realizes an identification between $\Ad_H$ and $\Ad_H^*$. This allows to replace in Definition~\ref{def:moment-map} $\fh^*$ and $\Ad_H^*$ with $\fh$ and $\Ad_H$, respectively. Nevertheless, there are Lie groups $H$ for which there does not exist $\Ad_H(H)$-invariant inner products on $\fh$: the Heisenberg group is one of them. In this situation, the best alternative is to choose some ``natural'' inner product, identify $\fh$ and $\fh^*$ and compute the representation on $\fh$ that corresponds to $\Ad_H^*$ on $\fh^*$. We will follow this alternative for the Heisenberg group and its subgroups. To achieve this we consider the next general result.
\end{remark}

\begin{lemma}\label{lem:rho-from-Ad*}
	Let $H$ be a Lie group with Lie algebra $\fh$, let $\langle\cdot,\cdot\rangle$ be an inner product on $\fh$ and consider the identification $\fh \simeq \fh^*$ given by $\langle\cdot,\cdot\rangle$. Then, the map $\rho : H \rightarrow \GL(\fh)$ defined by $\langle\rho(h)(X), Y\rangle = \langle X, \Ad_H(h^{-1})(Y)\rangle$, for every $h \in H$, $X,Y \in \fh$, where $\Ad_H$ is the adjoint representation of $H$, is the representation corresponding to $\Ad_H^* : H \rightarrow \GL(\fh^*)$ for the identification $\fh \simeq \fh^*$. Furthermore, if $\langle\cdot,\cdot\rangle$ is $\Ad_H(H)$-invariant, then $\rho = \Ad_H$.
\end{lemma}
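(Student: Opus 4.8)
The plan is to recognize the defining relation for $\rho$ as nothing more than the unraveling, through the chosen identification, of the commuting square that expresses the phrase ``$\rho$ is the representation corresponding to $\Ad_H^*$''. Write $\Phi : \fh \to \fh^*$ for the linear isomorphism induced by the form, $\Phi(X) = \langle X, \cdot\rangle$; nondegeneracy is exactly what makes $\Phi$ bijective, and it is all I will use, so that the argument covers the indefinite ``inner products'' allowed in Remark~\ref{rmk:moment-map}. Saying that $\rho(h) \in \GL(\fh)$ corresponds to $\Ad_H^*(h) \in \GL(\fh^*)$ means precisely that $\Phi \circ \rho(h) = \Ad_H^*(h) \circ \Phi$ for every $h \in H$. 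First I would check that the stated relation $\langle \rho(h)(X), Y\rangle = \langle X, \Ad_H(h^{-1})(Y)\rangle$ really determines a map in $\GL(\fh)$: for fixed $h$ and $X$ the right-hand side is a linear functional of $Y$, so by nondegeneracy there is a unique vector $\rho(h)(X)$ representing it; linearity in $X$ is immediate, and invertibility follows from that of $\Ad_H(h^{-1})$ together with nondegeneracy.

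Next I would verify the commuting square by evaluating both sides on an arbitrary $Y \in \fh$. Recalling $\Ad_H^*(h) = \Ad_H(h^{-1})^\top$, the definition of the transpose gives $(\Ad_H^*(h)\Phi(X))(Y) = \Phi(X)(\Ad_H(h^{-1})(Y)) = \langle X, \Ad_H(h^{-1})(Y)\rangle$, whereas $(\Phi(\rho(h)(X)))(Y) = \langle \rho(h)(X), Y\rangle$. Hence $\Phi \circ \rho(h) = \Ad_H^*(h) \circ \Phi$ holds for all $Y$ if and only if the defining relation of $\rho$ holds, which is the first assertion. That $\rho$ is a genuine representation then comes for free: $\rho(h) = \Phi^{-1} \circ \Ad_H^*(h) \circ \Phi$ is the $\Phi$-conjugate of $\Ad_H^*$, and $\Ad_H^*$ is a homomorphism because inversion and transposition each reverse the order of composition, so their combination preserves it.

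For the last assertion I would assume $\langle\cdot,\cdot\rangle$ is $\Ad_H(H)$-invariant and start again from the defining relation. Applying invariance with the element $h$ to rewrite the right-hand side yields $\langle X, \Ad_H(h^{-1})(Y)\rangle = \langle \Ad_H(h)(X), \Ad_H(h)\Ad_H(h^{-1})(Y)\rangle = \langle \Ad_H(h)(X), Y\rangle$, so that $\langle \rho(h)(X), Y\rangle = \langle \Ad_H(h)(X), Y\rangle$ for every $Y$; nondegeneracy then forces $\rho(h)(X) = \Ad_H(h)(X)$, that is, $\rho = \Ad_H$.

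There is no substantial obstacle here: the entire content is the careful matching of the dual and transpose conventions fixed in Definition~\ref{def:moment-map}. The one point I would guard against is relying on positive definiteness; phrasing the Riesz-type identification and the final cancellation purely via nondegeneracy is what lets the lemma apply to the non-positive-definite form one is forced to use for the Heisenberg group.
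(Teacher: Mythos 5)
Your proof is correct and follows essentially the same route as the paper's: both identify $\rho(h)$ as $\Ad_H(h^{-1})^\top$ and show the defining relation is precisely the commutativity of the square $\Phi \circ \rho(h) = \Ad_H^*(h) \circ \Phi$ for the identification $\Phi(X) = \langle X, \cdot\rangle$, with the invariant case following by moving $\Ad_H(h)$ across the pairing. Your added care in using only nondegeneracy (so the indefinite case of Remark~\ref{rmk:moment-map} is covered) and in verifying well-definedness and the homomorphism property are just explicit versions of steps the paper leaves implicit.
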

\begin{proof}
	If we consider the transpose operation with respect to the given inner product as well as our identification $\fh \simeq \fh^*$, then we clearly have $\rho(h) = \Ad_H(h^{-1})^\top$, for every $h \in H$. From this it follows that $\rho$ is a representation of $H$ on $\fh$. Furthermore, the identification $\fh \simeq \fh^*$ is given by the map $X \mapsto \langle X, \cdot \rangle$, and so the definition of $\rho$ is equivalent to the commutativity of the following diagram
	\[
		\xymatrix{
			\fh \ar[rr]^{\rho(h)} \ar[d] & & \fh \ar[d] \\	
			\fh^* \ar[rr]^{\Ad_H^*(h)}& & \fh^*
		}
	\]
	for every $h \in H$. This shows that $\rho$ corresponds to $\Ad_H^*$ under the identification $\fh \simeq \fh^*$ under consideration. The last claim is now a clear consequence.
\end{proof}

\begin{remark}\label{rmk:rho-from-Ad*}
	In the rest of this work, and in the notation of Lemma~\ref{lem:rho-from-Ad*}, we will say that $\rho$ is the representation of $H$ on its Lie algebra $\fh$ transpose to $\Ad_H$. For these notation and construction to be considered we assume the choice of some inner product in $\fh$. The upshot of Lemma~\ref{lem:rho-from-Ad*} is that $\rho$, the transpose of $\Ad_H$, is the same as $\Ad_H^*$, the dual of $\Ad_H$, up to the identification $\fh \simeq \fh^*$ given by the chosen inner product.
\end{remark}

Definition~\ref{def:moment-map} and Lemma~\ref{lem:rho-from-Ad*} yield the next consequence. We will use it as alternative definition of moment map in the rest of this work, since it will provide a more convenient way to compute moment maps for the symplectic actions that we consider.

\begin{corollary}\label{cor:moment-map-fh}
	Let $H$ be a Lie group acting by symplectomorphisms on a symplectic manifold $(M, \omega)$ and let us consider the identification $\fh \simeq \fh^*$ obtained from a given inner product $\langle\cdot,\cdot\rangle$ on $\fh$. With respect to such identification, a map $\mu : M \rightarrow \fh$ is a moment map for the $H$-action on $M$ if and only if the following conditions are satisfied.
	\begin{enumerate}
		\item For every $X \in \fh$, the function $\mu_X : M \rightarrow \R$ given by $\mu_X(z) = \langle \mu(z), X\rangle$ satisfies $\dif \mu_X = \omega(X^\sharp, \cdot)$.
		\item The map $\mu : M \rightarrow \fh$ is $H$-equivariant with respect to the representation $\rho : H \rightarrow \GL(\fh)$ transpose to $\Ad_H$ and defined in Lemma~\ref{lem:rho-from-Ad*}.
	\end{enumerate}
\end{corollary}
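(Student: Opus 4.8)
The plan is to transport both defining conditions of a moment map from Definition~\ref{def:moment-map}, which are phrased on $\fh^*$ using $\Ad_H^*$, to the corresponding conditions on $\fh$ using $\rho$, via the linear isomorphism $\Phi : \fh \to \fh^*$, $\Phi(X) = \langle X, \cdot\rangle$, induced by the chosen inner product. Given a candidate $\mu : M \to \fh$, I would set $\widetilde\mu = \Phi \circ \mu : M \to \fh^*$ and show that $\mu$ satisfies conditions (1)--(2) of the Corollary precisely when $\widetilde\mu$ satisfies conditions (1)--(2) of Definition~\ref{def:moment-map}. Since $\Phi$ is a linear isomorphism, the maps $\mu$ and $\widetilde\mu$ determine each other, and the assertion that $\mu$ is a moment map \emph{under the identification} $\fh \simeq \fh^*$ is, by convention, exactly the assertion that $\widetilde\mu$ is a moment map in the sense of Definition~\ref{def:moment-map}.

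For condition (1), I would observe that the scalar function attached to a given $X \in \fh$ is literally the same in both formulations. Indeed, the natural pairing of $\widetilde\mu(z) \in \fh^*$ with $X$ equals $\langle \Phi(\mu(z)), X\rangle = \langle \mu(z), X\rangle$ by the very definition of $\Phi$, so the function $\mu_X(z) = \langle \mu(z), X\rangle$ coincides with the function $z \mapsto \langle \widetilde\mu(z), X\rangle$ appearing in Definition~\ref{def:moment-map}(1). Consequently the differential requirement $\dif\mu_X = \omega(X^\sharp, \cdot)$ is identical in the two settings and needs no further argument.

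For condition (2), I would invoke Lemma~\ref{lem:rho-from-Ad*}, whose content is precisely that $\rho(h)$ on $\fh$ and $\Ad_H^*(h)$ on $\fh^*$ are intertwined by $\Phi$, that is $\Phi \circ \rho(h) = \Ad_H^*(h) \circ \Phi$ for every $h \in H$. Applying $\Phi$ to the relation $\mu(h\cdot z) = \rho(h)(\mu(z))$ and using this intertwining identity yields $\widetilde\mu(h\cdot z) = \Ad_H^*(h)(\widetilde\mu(z))$; conversely, since $\Phi$ is injective, the $\Ad_H^*$-equivariance of $\widetilde\mu$ forces the $\rho$-equivariance of $\mu$. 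Hence the two equivariance conditions are equivalent, and combining this with the equivalence for condition (1) completes both directions.

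I do not expect a genuine obstacle here: the statement is a formal translation, and the only points demanding care are bookkeeping—verifying that the pairing $\langle \widetilde\mu(z), X\rangle$ truly unwinds to the inner product $\langle \mu(z), X\rangle$, and correctly orienting the intertwining relation from Lemma~\ref{lem:rho-from-Ad*} (i.e.\ the direction in which $\Phi$ relates $\rho$ and $\Ad_H^*$). Once these are pinned down, the equivalence in both directions follows immediately from the bijectivity of $\Phi$.
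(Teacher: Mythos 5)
Your proposal is correct and takes essentially the same route as the paper's own proof: the paper likewise observes that under the identification $\fh \simeq \fh^*$ the dual pairing $\langle \mu(z), X\rangle$ becomes the inner product itself, making the two conditions (1) literally identical, and that the two equivariance conditions (2) correspond via Lemma~\ref{lem:rho-from-Ad*} and the commutative diagram in its proof, which is exactly your intertwining relation $\Phi \circ \rho(h) = \Ad_H^*(h) \circ \Phi$. Nothing further is needed.
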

\begin{proof}
	We note that in condition (1) from Definition~\ref{def:moment-map} the expression $\mu_X(z) = \langle \mu(z), X\rangle$ considers the angled brackets as evaluation of a linear functional on a vector. Under the identification $\fh \simeq \fh^*$ given by the inner product such evaluation corresponds to the inner product itself. Hence, both conditions (1), from our statement and from Definition~\ref{def:moment-map}, are equivalent under the identification $\fh \simeq \fh^*$. Similarly, the equivariance required in conditions (2) from the definition and this corollary are equivalent by Lemma~\ref{lem:rho-from-Ad*} and the commutative diagram found in its~proof.
\end{proof}

It is also useful to relate the moment maps for actions of subgroups to those of the action of the ambient group. We consider this problem within the setup of the previous results.

\begin{proposition}\label{prop:moment-map-subgroups}
	Let $H$ be a Lie group acting by symplectomorphisms on a symplectic manifold $(M,\omega)$ and let $G \subset H$ be a Lie subgroup with their Lie algebras denoted by $\fg$ and $\fh$, respectively. Let us also fix an inner product in $\fh$ and the corresponding identification $\fh \simeq \fh^*$. With the notation from Corollary~\ref{cor:moment-map-fh}, if $\mu : M \rightarrow \fh$ is a moment map for the $H$-action on $M$ and $\pi : \fh \rightarrow \fg$ is the orthogonal projection, then $\pi \circ \mu : M \rightarrow \fg$ is a moment map for the $G$-action on $M$.
\end{proposition}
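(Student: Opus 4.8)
The plan is to verify the two conditions of Corollary~\ref{cor:moment-map-fh} for the map $\pi \circ \mu : M \to \fg$, using that they already hold for $\mu : M \to \fh$. Throughout I keep the inner product on $\fh$ fixed, and I identify $\fg$ with its image in $\fh$ so that $\pi : \fh \to \fg$ is genuinely the orthogonal projection onto a subspace. The key algebraic fact I will exploit is that for the orthogonal projection one has $\langle \pi(v), X \rangle = \langle v, X \rangle$ whenever $X \in \fg$, since $v - \pi(v) \perp \fg$.

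First I would check condition (1). Fix $X \in \fg$ and write $(\pi\circ\mu)_X(z) = \langle \pi(\mu(z)), X\rangle$. By the observation above, since $X \in \fg \subset \fh$, this equals $\langle \mu(z), X\rangle = \mu_X(z)$. Hence $(\pi\circ\mu)_X = \mu_X$ as functions on $M$, and therefore $\dif (\pi\circ\mu)_X = \dif \mu_X = \omega(X^\sharp, \cdot)$, where the last equality is condition (1) for $\mu$. The only subtle point is that $X^\sharp$ must mean the fundamental vector field of the $G$-action; but since $G \subset H$ and the $G$-action is the restriction of the $H$-action, the exponential $\exp_G$ agrees with $\exp_H$ on $\fg$, so the fundamental vector field $X^\sharp$ for $G$ coincides with that for $H$ for every $X \in \fg$. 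Thus condition (1) transfers verbatim.

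Next I would check condition (2), the $G$-equivariance of $\pi\circ\mu$ for the representation $\rho_G : G \to \GL(\fg)$ transpose to $\Ad_G$. This is the step I expect to be the main obstacle, because one must relate $\rho_G$ (built from the adjoint action of $G$ on $\fg$) to $\rho_H$ (built from the adjoint action of $H$ on $\fh$), and these need not be compatible with $\pi$ in general. The point is that for $g \in G$ the subspace $\fg$ is $\Ad_H(g)$-invariant, so $\Ad_H(g)|_\fg = \Ad_G(g)$; dualizing with respect to the inner product, I must show $\pi \circ \rho_H(g) = \rho_G(g) \circ \pi$ on $\fh$. Concretely, for $g \in G$ and $X \in \fg$, $Y \in \fh$ one computes $\langle \pi(\rho_H(g)(Y)), X\rangle = \langle \rho_H(g)(Y), X\rangle = \langle Y, \Ad_H(g^{-1})(X)\rangle = \langle Y, \Ad_G(g^{-1})(X)\rangle$ using $\Ad_H(g^{-1})(X) \in \fg$; and $\langle \rho_G(g)(\pi(Y)), X\rangle = \langle \pi(Y), \Ad_G(g^{-1})(X)\rangle = \langle Y, \Ad_G(g^{-1})(X)\rangle$ by the same projection identity since $\Ad_G(g^{-1})(X) \in \fg$. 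These agree for all $X \in \fg$, giving the intertwining relation $\pi \circ \rho_H(g) = \rho_G(g) \circ \pi$.

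With the intertwining relation in hand the equivariance follows immediately: for $g \in G$ and $z \in M$,
\[
	(\pi\circ\mu)(g\cdot z) = \pi(\mu(g\cdot z)) = \pi(\rho_H(g)(\mu(z))) = \rho_G(g)(\pi(\mu(z))) = \rho_G(g)((\pi\circ\mu)(z)),
\]
where the second equality is the $H$-equivariance of $\mu$ from condition (2) for the $H$-action. This establishes condition (2) for $\pi\circ\mu$, and together with condition (1) shows that $\pi\circ\mu$ is a moment map for the $G$-action, completing the proof. The conceptual heart of the argument is the single compatibility $\pi \circ \rho_H(g) = \rho_G(g)\circ\pi$, whose proof rests entirely on the $\Ad_H(g)$-invariance of the subspace $\fg$ for $g\in G$ and the defining adjointness of the transpose representations.
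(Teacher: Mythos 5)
Your proof is correct and takes essentially the same approach as the paper's: both verify the two conditions of Corollary~\ref{cor:moment-map-fh}, reducing condition (1) to the pairing identity $\langle \pi(v), X\rangle = \langle v, X\rangle$ for $X \in \fg$, and condition (2) to a compatibility between $\rho_H$, $\rho_G$ and $\pi$ proved from $\Ad_H(g)|_{\fg} = \Ad_G(g)$ for $g \in G$. The only cosmetic difference is that you package this compatibility as the single intertwining relation $\pi \circ \rho_H(g) = \rho_G(g) \circ \pi$ on all of $\fh$, whereas the paper splits it into $\pi \circ \rho_H(g) = \pi \circ \rho_H(g) \circ \pi$ (via $\rho_H(g)$-invariance of $\fg^\perp$) together with $\rho_G(g) = \pi \circ \rho_H(g) \circ \pi|_{\fg}$.
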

\begin{proof}
	We will prove that $\pi \circ \mu$ satisfies the properties stated in Corollary~\ref{cor:moment-map-fh} that characterize a moment map for the $G$-action. Hence, we will use the notation of that result.
	
	We note that for $X \in \fg$
	\[
		\big(\pi\circ\mu\big)_X (z)
			= \langle\pi\circ\mu(z),X\rangle 
			= \langle\mu(z),X\rangle 
			= \mu_X(z),
	\]
	for every $z \in M$. Hence, the map $\pi \circ \mu$ satisfies Corollary~\ref{cor:moment-map-fh}(1) for the $G$-action as a consequence of $\mu$ satisfying the same property.
	
	Let us now consider the map $\rho_H : H \rightarrow \GL(\fh)$ given by $\rho_H(h) = \Ad_H(h^{-1})^\top$, which is the representation transpose to $\Ad_H$. We will use the notation where the group associated to a representation is indicated as a subindex. We claim that $\rho_H(h)(\fg^\perp) = \fg^\perp$ for every $h \in G$. This follows from the fact that, for every $h \in G$, we have  $\Ad_G(h^{-1}) = \Ad_H(h^{-1})|_{\fg}$, while the former leaves invariant $\fg$. As a consequence we have
	\[
		\pi \circ \rho_H(h) = \pi \circ \rho_H(h) \circ \pi
	\]
	for every $h \in G$. This identity is obtained by noting that it holds for elements of both $\fg$ and $\fg^\perp$. We thus conclude that for $h \in G$ and $X,Y \in \fg$ we have
	\begin{align*}
		\langle\pi\circ\rho_H(h)\circ\pi(X),Y\rangle 
			&= \langle\Ad_H(h^{-1})^\top(X), Y \rangle
			= \langle X, \Ad_H(h^{-1})(Y) \rangle \\
			&= \langle X, \Ad_G(h^{-1})(Y) \rangle 
			= \langle \Ad_G(h^{-1})^\top(X), Y \rangle,
	\end{align*}
	which yields the identity
	\[
		\rho_G(h) = \Ad_G(h^{-1})^\top 
				= \pi \circ \rho_H(h) \circ \pi |_{\fg},
	\]
	for every $h \in G$, where $\rho_{G}$ is the representation transpose to $\Ad_G$. Once, we have proved these properties we can compute as follows for every $z \in M$ and~$h \in G$
	\begin{align*}
		(\pi\circ\mu)(hz) &= \pi(\rho_H(h)(\mu(z))) \\
				&= (\pi \circ \rho_H(h) \circ \pi) (\pi\circ\mu(z)) \\
				&= \rho_{G}(h)(\pi\circ\mu(z)),
	\end{align*}
	where we have used the $H$-equivariance of $\mu$ in the first identity. This proves that $\pi\circ\mu$ satisfies Corollary~\ref{cor:moment-map-fh}(2) for the $G$-action. We conclude from this corollary that $\pi\circ\mu$ is a moment map for the $G$-action on $M$.
\end{proof}

\subsection{K\"ahler manifolds}
We recall here some well known facts and refer to \cite[Chapter~IX]{KNII} and \cite[Chapter~2]{Mok} for further details. Let us consider a complex manifold $M$ and let us denote by $J$ its complex structure as a tensor acting on its tangent bundle. A Riemannian metric $g$ on $M$ is called Hermitian when it satisfies
\[
	g_z(J_z u, J_z v) = g_z(u,v),
\]
for every $z \in M$ and $u, v \in T_zM$. In this case, the tensor $\omega$ defined by
\[
	\omega_z(u,v) = g_z(J_z u, v),
\]
for every $z \in M$ and $u, v \in T_zM$ as well, is a non-degenerate $2$-form. With this notation, we say that the pair $(M,g)$ is a Hermitian manifold with associated $2$-form $\omega$. If the $2$-form $\omega$ is closed, then $M$ is called a K\"ahler manifold and $\omega$ is referred as its associated K\"ahler form. In particular, every K\"ahler manifold is a symplectic manifold whose symplectic form is precisely the associated K\"ahler form.

Our main example is given by the description of the Siegel domain as a K\"ahler manifold. From now on we will freely use Wirtinger partial derivatives and their dual $1$-forms. For this and the next result we refer to \cite[Chapter~IX]{KNII}, \cite[Chapter~2]{Mok} and \cite{QSMomentMapJFA}.

\begin{proposition}\label{prop:Siegel-Kahler}
	The Siegel domain $D_{n+1}$ has a K\"ahler structure whose Riemannian metric and K\"ahler form $g$ and $\omega$, respectively, are given by the following expressions.
	\begin{align*}
		g_z =&\; \frac{1}{(\im(z_{n+1}) - |z'|^2)^2}
				\bigg(
					(\im(z_{n+1}) - |z'|^2) 
						\sum_{j=1}^n \dif z_j \otimes \dif \overline{z}_j \\
				&+ \sum_{j,k=1}^n \overline{z}_j z_k \dif z_j 
											\otimes \dif \overline{z}_k 
				+ \frac{1}{2i} \sum_{j=1}^n \big(
					\overline{z}_j \dif z_j \otimes \dif \overline{z}_{n+1}
						-z_j \dif z_{n+1} \otimes \dif \overline{z}_j
					\big) \\
				&+ \frac{1}{4} \dif z_{n+1} \otimes \dif \overline{z}_{n+1}
				\bigg) \\
		\omega_z =&\; \frac{i}{(\im(z_{n+1}) - |z'|^2)^2}
				\bigg(
					(\im(z_{n+1}) - |z'|^2) 
						\sum_{j=1}^n \dif z_j \wedge \dif \overline{z}_j \\
				&+ \sum_{j,k=1}^n \overline{z}_j z_k \dif z_j 
											\wedge \dif \overline{z}_k 
				+ \frac{1}{2i} \sum_{j=1}^n \big(
					\overline{z}_j \dif z_j \wedge \dif \overline{z}_{n+1}
						-z_j \dif z_{n+1} \wedge \dif \overline{z}_j
					\big) \\
				&+ \frac{1}{4} \dif z_{n+1} \wedge \dif \overline{z}_{n+1}
				\bigg).
	\end{align*}
	Furthermore, every biholomorphism of $D_{n+1}$ preserves both tensors and so the group of automorphisms $D_{n+1}$ yields an action which is both isometric and symplectic.
\end{proposition}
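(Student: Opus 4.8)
The plan is to exhibit the two tensors as those attached to a K\"ahler potential and then to deduce their invariance under biholomorphisms from the transformation law of the Bergman kernel. Write $r(z) = \im(z_{n+1}) - |z'|^2$, which is positive precisely on $D_{n+1}$, and set $\Phi = -\log r$. The first goal is to check that, in the conventions relating a Hermitian tensor to its fundamental form, the stated tensors are $g = \sum_{j,k} g_{j\overline{k}}\, \dif z_j \otimes \dif\overline z_k$ and $\omega = i\sum_{j,k} g_{j\overline{k}}\, \dif z_j \wedge \dif\overline z_k$, where $g_{j\overline{k}} = \partial_j\partial_{\overline k}\Phi$ is the complex Hessian of the potential.

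To do this I would first record the Wirtinger derivatives of $r$. Writing $r = \frac{z_{n+1} - \overline z_{n+1}}{2i} - \sum_{j=1}^n z_j\overline z_j$, one gets $\partial_j r = -\overline z_j$ and $\partial_{\overline j} r = -z_j$ for $j \leq n$, together with $\partial_{n+1} r = \frac{1}{2i}$ and $\partial_{\overline{n+1}} r = \frac{i}{2}$, while the only nonvanishing second derivatives are $\partial_j\partial_{\overline k} r = -\delta_{jk}$ for $j,k \leq n$. Substituting into $\partial_j\partial_{\overline k}\Phi = -\frac{\partial_j\partial_{\overline k} r}{r} + \frac{\partial_j r\,\partial_{\overline k} r}{r^2}$ and splitting into the cases ($j,k \leq n$), (exactly one index equal to $n+1$), and ($j=k=n+1$) reproduces precisely the coefficients displayed in the statement, both for $g$ and for $\omega$. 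Closedness of $\omega$ is then automatic, since it is $i\partial\overline\partial$ of a smooth function, and positive definiteness of $(g_{j\overline k})$ follows from the clean identity $\sum_{j,k} g_{j\overline k}\xi_j\overline\xi_k = \frac{|\xi'|^2}{r} + \frac{1}{r^2}\bigl|\tfrac{\xi_{n+1}}{2i} - \overline{z'}\cdot\xi'\bigr|^2$, which is nonnegative and vanishes only at $\xi = 0$; together with the general correspondence between a positive Hermitian form of type $\sum g_{j\overline k}\,\dif z_j \otimes \dif\overline z_k$ and its associated fundamental form, this makes $(D_{n+1},g)$ K\"ahler with K\"ahler form $\omega$.

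For the invariance I would avoid manipulating automorphisms directly and instead use that $\Phi$ is, up to an additive constant and the positive factor $n+2$, the Bergman potential. Indeed, restricting the kernel of the excerpt with $\lambda = 0$ to the diagonal gives $K_{n+1,0}(z,z) = r^{-(n+2)}$, so $i\partial\overline\partial\log K_{n+1,0}(z,z) = (n+2)\,\omega$. If $F$ is any biholomorphism of $D_{n+1}$, the transformation law $K_{n+1,0}(z,z) = |\det F'(z)|^2\,K_{n+1,0}(F(z),F(z))$ gives $\log K_{n+1,0}(z,z) = F^*\log K_{n+1,0}(z,z) + \log|\det F'(z)|^2$, and applying $i\partial\overline\partial$ annihilates the pluriharmonic term $\log|\det F'|^2$ because $\det F'$ is holomorphic; hence $F^*\omega = \omega$. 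Since $F$ is holomorphic, $\dif F$ commutes with $J$, and combining this with $F^*\omega = \omega$ through $\omega(\cdot, J\cdot) = g$ yields $F^*g = g$ as well, so $\mathrm{Aut}(D_{n+1})$ acts both symplectically and by isometries.

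The only real obstacle I anticipate is bookkeeping: keeping the factors $\frac{1}{2i}$ straight in the mixed second derivatives involving the $z_{n+1}$ coordinate, so that the cross terms match the stated $\frac{1}{2i}$-coefficients with the correct signs, and fixing once and for all the normalization convention identifying the Hermitian tensor $\sum g_{j\overline k}\,\dif z_j \otimes \dif\overline z_k$ with the Riemannian metric $g$ as written in the proposition.
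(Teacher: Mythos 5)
Your proposal is correct, but note that the paper does not actually prove this proposition: it is stated as a known fact with a reference to Kobayashi--Nomizu, Mok, and \cite{QSMomentMapJFA}, so any comparison is with the classical argument in those sources rather than with an in-paper proof. Your route is essentially that classical argument, reconstructed self-containedly: you exhibit $\Phi = -\log\big(\im(z_{n+1})-|z'|^2\big)$ as a K\"ahler potential, and your Wirtinger computations check out --- with $r = \frac{z_{n+1}-\overline{z}_{n+1}}{2i} - z'\cdot\overline{z'}$ one indeed gets $\partial_j\partial_{\overline{k}}\Phi = \frac{\delta_{jk}}{r} + \frac{\overline{z}_j z_k}{r^2}$ for $j,k\le n$, the cross terms $\frac{\overline{z}_j}{2i r^2}$ and $-\frac{z_k}{2i r^2}$, and $\frac{1}{4r^2}$ in the corner, matching the displayed coefficients; your positivity identity $\sum g_{j\overline{k}}\xi_j\overline{\xi}_k = \frac{|\xi'|^2}{r} + \frac{1}{r^2}\big|\frac{\xi_{n+1}}{2i} - \overline{z'}\cdot\xi'\big|^2$ is also correct and settles definiteness cleanly. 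The invariance step is where your argument adds genuine value over a bare citation: since $K_{n+1,0}(z,z) = r^{-(n+2)}$, your identification $(n+2)\,\omega = i\partial\overline{\partial}\log K_{n+1,0}(z,z)$ shows $g$ is $\frac{1}{n+2}$ times the Bergman metric, and the kernel transformation law $K(z,z) = |\det F'(z)|^2 K(F(z),F(z))$ plus pluriharmonicity of $\log|\det F'|^2$ gives $F^*\omega = \omega$ for \emph{every} biholomorphism $F$, with $F^*g = g$ following since $\dif F$ commutes with $J$; this is exactly how the references obtain automorphism-invariance, and it avoids any case analysis of generators of $\mathrm{Aut}(D_{n+1})$. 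Two small points you should make explicit if this were written up: first, the kernel transformation law on the unbounded domain $D_{n+1}$ is legitimate because pullback by a biholomorphism is a unitary between the (nontrivial) Bergman spaces, and the constant $c_0/4$ in the paper's normalization of $v_0$ cancels on both sides; second, the convention issue you flag is real --- the displayed $\sum g_{j\overline{k}}\,\dif z_j\otimes\dif\overline{z}_k$ must be read as its Hermitian symmetrization $\sum g_{j\overline{k}}\big(\dif z_j\otimes\dif\overline{z}_k + \dif\overline{z}_k\otimes\dif z_j\big)$ for the paper's relation $\omega_z(u,v) = g_z(J_zu,v)$ to hold with the stated $\omega$, but this is the same abuse of notation the paper itself commits, so your coefficient matching is the right consistency check.
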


In the rest of this work we will consider $D_{n+1}$ as a K\"ahler manifold for the structure described in Proposition~\ref{prop:Siegel-Kahler}. As an immediate consequence of the latter and Proposition~\ref{prop:Hn-action} we obtain the next result.

\begin{corollary}\label{cor:Hn-symplectic-action}
	The action of the Heisenberg group $\HH_n$ on the Siegel domain $D_{n+1}$ is symplectic.
\end{corollary}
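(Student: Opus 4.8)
The plan is to deduce the statement directly by assembling the two preceding propositions, so the argument is a matter of combining established facts rather than performing a fresh computation.

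First I would recall from Proposition~\ref{prop:Hn-action} that every element $(w',t) \in \HH_n$ acts on $D_{n+1}$ by the restriction of a holomorphic self-map of $\C^{n+1}$ that preserves the defining function $z \mapsto \im(z_{n+1}) - |z'|^2$, and hence restricts to a biholomorphism of $D_{n+1}$ onto itself. Consequently the $\HH_n$-action factors through the biholomorphism group $\mathrm{Aut}(D_{n+1})$: every group element yields an automorphism of the Siegel domain.

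Next I would invoke the final assertion of Proposition~\ref{prop:Siegel-Kahler}, which states that every biholomorphism of $D_{n+1}$ preserves both the Riemannian metric $g$ and the associated $2$-form $\omega$. Since $\omega$ is precisely the symplectic form coming from the K\"ahler structure, this says that each such biholomorphism is a symplectomorphism of $(D_{n+1}, \omega)$ in the sense recalled before Definition~\ref{def:moment-map}, i.e. it satisfies the pullback condition $\omega_z(u,v) = \omega_{\varphi(z)}(\dif\varphi_z(u), \dif\varphi_z(v))$.

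Combining the two observations, every element of $\HH_n$ acts on $D_{n+1}$ as a symplectomorphism, which is exactly what is required for the $\HH_n$-action to be symplectic. I do not expect any real obstacle here: the substantive content has already been absorbed into Proposition~\ref{prop:Siegel-Kahler}, where the nontrivial fact that holomorphic automorphisms of this K\"ahler domain preserve its canonical K\"ahler form is recorded. The purpose of this corollary is simply to note that the particular action of $\HH_n$ meets the hypotheses needed to speak of its moment map in the sequel, so that Corollary~\ref{cor:moment-map-fh} and Proposition~\ref{prop:moment-map-subgroups} become applicable to $\HH_n$ and its subgroups.
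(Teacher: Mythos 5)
Your proposal is correct and follows exactly the paper's reasoning: the corollary is stated there as an immediate consequence of Proposition~\ref{prop:Hn-action} (the action is by biholomorphisms of $D_{n+1}$) together with the last assertion of Proposition~\ref{prop:Siegel-Kahler} (every biholomorphism preserves the K\"ahler form $\omega$). Nothing is missing.
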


\subsection{Moment maps for the Heisenberg group and its subgroups}
We will now compute a moment map for the $\HH_n$-action on $D_{n+1}$. We start with the next simple lemma that computes the vector fields on $D_{n+1}$ induced from the Lie algebra~$\fh_n$.

\begin{lemma}\label{lem:X-sharp}
	For the $\HH_n$-action on $D_{n+1}$ and for every $X = (w',t) \in \fh_n$ we have
	\begin{align*}
		X^\sharp_z &= (w', t + 2i z'\cdot \overline{w'}) \\
				&= \sum_{j=1}^n\bigg(
					w_j \frac{\partial}{\partial z_j} 
						+ \overline{w}_j \frac{\partial}{\partial \overline{z}_j}
						\bigg) 
				+ \big(t + 2i z'\cdot \overline{w'} \big) 
						\frac{\partial}{\partial z_{n+1}}
				+ \overline{\big(t + 2i z'\cdot \overline{w'} \big)} 
						\frac{\partial}{\partial \overline{z}_{n+1}}
	\end{align*}
	for every $z \in D_{n+1}$, where the first expression belongs to $\C^{n+1} = T_z D_{n+1}$ and the second expression corresponds to the vector field as a linear combination of the Wirtinger operators.
\end{lemma}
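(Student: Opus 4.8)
The plan is to compute $X^\sharp_z$ directly from its definition as the velocity at $r=0$ of the orbit curve $r \mapsto \exp(rX)\cdot z$, and then to rewrite the resulting tangent vector of $\C^{n+1}$ in terms of Wirtinger operators.

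First I would invoke Proposition~\ref{prop:Heisenberg-properties}, which tells us that the exponential map of $\HH_n$ is the identity, so that $\exp(rX) = \exp(r(w',t)) = (rw', rt)$. Substituting this into the action formula of Proposition~\ref{prop:Hn-action} gives the explicit orbit curve
\[
	(rw', rt)\cdot z = \big(z' + rw',\; z_{n+1} + rt + 2ir\, z'\cdot \overline{w'} + ir^2 |w'|^2\big),
\]
where I have used that $r$ is real to pull it out of the conjugation and the modulus. Differentiating each component with respect to $r$ and evaluating at $r=0$ annihilates the quadratic term $ir^2|w'|^2$ and produces the first expression $X^\sharp_z = (w', t + 2i z'\cdot \overline{w'})$, viewed as an element of $\C^{n+1} = T_z D_{n+1}$.

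To obtain the second expression I would use the standard dictionary relating a velocity vector $v = (v_1,\dots,v_{n+1}) \in \C^{n+1}$, with $v_j = \frac{\dif}{\dif r} z_j$, to the real tangent vector $\sum_{j=1}^{n+1}\big( v_j \frac{\partial}{\partial z_j} + \overline{v}_j \frac{\partial}{\partial \overline{z}_j}\big)$. A short verification, splitting $v_j = \dot x_j + i\dot y_j$ and inserting the definitions of the Wirtinger operators $\frac{\partial}{\partial z_j}$ and $\frac{\partial}{\partial \overline{z}_j}$, confirms this identity. Applying it with $v_j = w_j$ for $1 \le j \le n$ and $v_{n+1} = t + 2iz'\cdot\overline{w'}$ yields the second displayed formula, the $\partial/\partial\overline{z}_{n+1}$-coefficient being exactly $\overline{v}_{n+1} = \overline{\big(t + 2iz'\cdot\overline{w'}\big)}$.

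There is no genuine obstacle here; the computation is elementary once the exponential map is known to be the identity. The only point demanding slight care is the passage between the two descriptions of the tangent vector, where one must ensure that the $\overline{z}_{n+1}$-coefficient carries the complex conjugate $\overline{\big(t + 2iz'\cdot\overline{w'}\big)}$ rather than the value itself — this is where a conjugation slip would most easily occur.
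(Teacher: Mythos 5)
Your proposal is correct and follows essentially the same route as the paper: compute $\exp(rX)\cdot z$ via Propositions~\ref{prop:Heisenberg-properties} and \ref{prop:Hn-action}, differentiate at $r=0$, and then convert the resulting $\C^{n+1}$-valued tangent vector into Wirtinger form using the standard dictionary $Y = \sum_j \big( f_j \,\partial/\partial z_j + \overline{f}_j\, \partial/\partial \overline{z}_j \big)$, which the paper simply cites from \cite{QSMomentMapJFA} rather than verifying by splitting into real and imaginary parts as you do.
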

\begin{proof}
	For the given objects, Propositions~\ref{prop:Heisenberg-properties} and \ref{prop:Hn-action} imply that for every $r \in \R$ we have
	\[
		\exp(rX)\cdot z = (z' + rw', z_{n+1} + rt + 2i r z'\cdot \overline{w'}
				+ i r^2 |w'|^2),
	\]
	whose derivative at $r = 0$ yields the first expression of the statement. On the other hand, if $Y = (f_1, \dots, f_{n+1})$ is a vector field on $D_{n+1}$ written as a $\C^{n+1}$-valued function, then $Y$ in terms of the Wirtinger operators is given by (see \cite{QSMomentMapJFA})
	\[
		Y = \sum_{j=1}^{n+1} \bigg(
			f_j \frac{\partial}{\partial z_j} 
				+ \overline{f}_j \frac{\partial}{\partial \overline{z}_j}
			\bigg),
	\]
	which yields the second expression of the statement.
\end{proof}

In the next result we obtain an expression for the representation transpose to the adjoint representation of $\HH_n$ on $\fh_n$ (see Lemma~\ref{lem:rho-from-Ad*} and Remark~\ref{rmk:rho-from-Ad*}). Recall that such transpose representation corresponds to the transpose of the adjoint representation.

\begin{lemma}\label{lem:rho-for-Heisenberg}
	Consider the natural isomorphism $\fh_n = \C^n \times \R \simeq \R^{2n+1}$ and the corresponding inner product $\langle\cdot,\cdot\rangle$ induced on $\fh_n$ from the canonical inner product in $\R^{2n+1}$. Then, in the notation of Lemma~\ref{lem:rho-from-Ad*}, the representation $\rho : \HH_n \rightarrow \GL(\fh_n)$ transpose to $\Ad_{\HH_n}$ is given by $\rho(w',t)(z',s) 
	= (z' + 4is w', s)$.
\end{lemma}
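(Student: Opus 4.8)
The plan is to use the characterization of $\rho$ provided by Lemma~\ref{lem:rho-from-Ad*}: with respect to the chosen inner product, $\rho(h)$ is simply the transpose $\Ad_{\HH_n}(h^{-1})^\top$. Concretely, I would verify that the candidate map $R(w',t)(z',s) = (z' + 4is\, w', s)$ satisfies the defining relation
\[
	\langle R(w',t)(z',s), (u',r)\rangle = \langle (z',s), \Ad_{\HH_n}(-w',-t)(u',r)\rangle
\]
for all $(z',s), (u',r) \in \fh_n$, which by Lemma~\ref{lem:rho-from-Ad*} forces $R = \rho$.

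First I would record the two ingredients. Under the identification $\fh_n \simeq \R^{2n+1}$, the induced inner product is
\[
	\langle (z',s), (w',t)\rangle = \re(z'\cdot\overline{w'}) + st,
\]
since $\re(z'\cdot\overline{w'})$ is exactly the standard real inner product on $\C^n \simeq \R^{2n}$. Next, from Proposition~\ref{prop:Heisenberg-properties} together with $(w',t)^{-1} = (-w',-t)$, we have
\[
	\Ad_{\HH_n}(-w',-t)(u',r) = (u', r - 4\im(w'\cdot\overline{u'})).
\]

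The core of the computation is then to expand the right-hand side of the defining relation,
\[
	\langle (z',s), \Ad_{\HH_n}(-w',-t)(u',r)\rangle
		= \re(z'\cdot\overline{u'}) + sr - 4s\,\im(w'\cdot\overline{u'}),
\]
and to recognize the last term as a real inner product. The key identity is $\im(\zeta) = \re(-i\zeta)$, which gives $\im(w'\cdot\overline{u'}) = \re\big((-iw')\cdot\overline{u'}\big)$ and hence
\[
	-4s\,\im(w'\cdot\overline{u'}) = \re\big((4is\,w')\cdot\overline{u'}\big).
\]
Collecting terms rewrites the whole expression as $\re\big((z'+4is\,w')\cdot\overline{u'}\big) + sr$, which is precisely $\langle (z'+4is\,w', s), (u',r)\rangle = \langle R(w',t)(z',s),(u',r)\rangle$. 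Since $(u',r)$ is arbitrary, this establishes the relation and therefore $\rho(w',t)(z',s) = (z'+4is\,w', s)$.

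Since everything reduces to a direct linear-algebra computation, there is no serious obstacle; the only point demanding care is the bookkeeping of the factor of $i$ and the sign when converting the imaginary-part pairing $\im(w'\cdot\overline{u'})$ into the real inner product, as this is exactly what produces the coefficient $4is$ in the final formula.
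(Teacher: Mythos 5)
Your proposal is correct and follows essentially the same route as the paper's own proof: both invoke the defining relation $\langle\rho(h)(X),Y\rangle=\langle X,\Ad_{\HH_n}(h^{-1})(Y)\rangle$ from Lemma~\ref{lem:rho-from-Ad*}, compute $\Ad_{\HH_n}(h^{-1})$ via Proposition~\ref{prop:Heisenberg-properties}, and convert the term $-4s\,\im(w'\cdot\overline{u'})$ into $\re\big((4is\,w')\cdot\overline{u'}\big)$ to identify the result as $\langle(z'+4is\,w',s),(u',r)\rangle$ for all $(u',r)$. The only cosmetic difference is that you verify a candidate map and appeal to uniqueness (non-degeneracy of the inner product), whereas the paper writes the same chain of equalities starting from $\langle\rho(h)(X),Y\rangle$; the computation is identical.
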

\begin{proof}
	For $X = (z',s), Y = (z_1',s_1)$ belonging to $\fh_n$ and $h = (w',t) \in \HH_n$ we have
	\begin{align*}
		\langle \rho(w',t)(z',s), Y \rangle 
			&= \langle \rho(h)(X), Y \rangle \\
			&= \langle X, \Ad_{\HH_n}(h^{-1})(Y)\rangle \\
			&= \langle (z',s), (z_1', s_1 - 4\im(w'\cdot \overline{z_1'}) \rangle \\
			&= \re(z'\cdot \overline{z_1'}) + s s_1 
				- 4 \im(s w' \cdot \overline{z_1'}) \\
			&= \re(z'\cdot \overline{z_1'} + 4i s w'\cdot \overline{z_1'})
				+ s s_1 \\
			&= \re((z' + 4i s w')\cdot \overline{z_1'})
				+ s s_1 \\
			&= \langle (z' + 4is w', s), Y\rangle,
	\end{align*}
	where we have used Proposition~\ref{prop:Heisenberg-properties} in the third identity. This yields the claimed expression.
\end{proof}

\begin{remark}\label{rmk:rho-for-Heisenberg}
	In the rest of this work, we will assume $\fh_n$ endowed with the inner product considered in Lemma~\ref{lem:rho-for-Heisenberg}. This fixes the identification between $\fh_n$ and $\fh_n^*$ required by Lemma~\ref{lem:rho-from-Ad*}, and so yields a well-defined representation transpose to $\Ad_{\HH_n}$, whose expression has been obtained in Lemma~\ref{lem:rho-for-Heisenberg}.
\end{remark}

We now obtain the main result of this section. Recall that we have agreed to use the definition of moment map provided by Corollary~\ref{cor:moment-map-fh}.

\begin{theorem}\label{thm:Heisenberg-moment-map}
	The map $\mu^{\HH_n} : D_{n+1} \rightarrow \fh_n$ given by
	\[
		\mu^{\HH_n}(z) = -\frac{(4iz', 1)}{2(\im(z_{n+1}) - |z'|^2)},
	\]
	is a moment map for the $\HH_n$-action on $D_{n+1}$.
\end{theorem}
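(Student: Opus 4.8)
The plan is to verify the two conditions that characterize a moment map in Corollary~\ref{cor:moment-map-fh}, using the inner product on $\fh_n$ fixed in Lemma~\ref{lem:rho-for-Heisenberg} and Remark~\ref{rmk:rho-for-Heisenberg}. Throughout I write $\phi(z) = \im(z_{n+1}) - |z'|^2$ for the defining function, so that $\mu^{\HH_n}(z) = -(4iz',1)/(2\phi(z))$. The first step is to make condition (1) explicit: for $X = (w',t) \in \fh_n$, pairing $\mu^{\HH_n}(z)$ with $X$ in the chosen inner product (the real inner product on $\C^n \simeq \R^{2n}$ together with the standard one on the central $\R$) and using $\re(-2iz'\cdot\overline{w'}) = 2\im(z'\cdot\overline{w'})$ gives the scalar function
\[
	\mu_X(z) = \langle \mu^{\HH_n}(z), X\rangle
		= \frac{1}{\phi(z)}\Big(2\im(z'\cdot\overline{w'}) - \frac{t}{2}\Big).
\]

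For condition (1) I would then compute $\dif\mu_X$ directly, differentiating the product $\phi^{-1}\cdot(2\im(z'\cdot\overline{w'}) - t/2)$ with Wirtinger operators, and separately compute the interior product $\omega(X^\sharp,\cdot) = \iota_{X^\sharp}\omega$ by contracting the K\"ahler form of Proposition~\ref{prop:Siegel-Kahler} against the vector field $X^\sharp$ in the Wirtinger form provided by Lemma~\ref{lem:X-sharp}; the two resulting $1$-forms must coincide. I expect this contraction-and-matching to be the main obstacle, since $\omega$ has four groups of terms and each must be paired correctly against the terms of $X^\sharp$. A cleaner route, which I would use to organize the computation, exploits that $\kappa = -\log\phi$ is a global K\"ahler potential, that is, $\omega = i\partial\overline\partial\kappa$, and that the $\HH_n$-action preserves $\phi$ and hence $\kappa$ by Proposition~\ref{prop:Hn-action}. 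Since each $X^\sharp$ generates a flow by biholomorphisms, $\mathcal{L}_{X^\sharp}$ commutes with $\partial$ and $\overline\partial$, so Cartan's formula applied to $d^c\kappa$ (with $\dif d^c\kappa = 2\omega$) yields $\iota_{X^\sharp}\omega = \dif\big(-\tfrac12\,\iota_{X^\sharp}d^c\kappa\big)$; evaluating $-\tfrac12\, d^c\kappa(X^\sharp)$ then reproduces exactly the function $\mu_X$ above, establishing (1) in one stroke.

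Finally, for condition (2) I would check equivariance under the representation $\rho(w',t)(z',s) = (z' + 4isw',s)$ of Lemma~\ref{lem:rho-for-Heisenberg}. Using that $\phi$ is $\HH_n$-invariant (Proposition~\ref{prop:Hn-action}) and that the action sends $z'$ to $z'+w'$, one substitutes into both sides:
\[
	\mu^{\HH_n}\big((w',t)\cdot z\big)
		= -\frac{(4i(z'+w'),1)}{2\phi(z)},
\]
while applying $\rho(w',t)$ to $\mu^{\HH_n}(z) = (-2iz'/\phi(z),\,-1/(2\phi(z)))$ shifts its first component by $4i\cdot(-1/(2\phi(z)))\,w' = -2iw'/\phi(z)$ and fixes the second, giving the same vector. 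This is a short direct computation and completes the verification that $\mu^{\HH_n}$ is a moment map.
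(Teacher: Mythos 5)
Your proposal is correct, and for the Hamiltonian condition (1) your preferred route is genuinely different from the paper's. The paper verifies (1) by direct computation: it writes out $\dif\mu_X$ with Wirtinger derivatives and, separately, contracts the K\"ahler form of Proposition~\ref{prop:Siegel-Kahler} with the field $X^\sharp$ of Lemma~\ref{lem:X-sharp}, then matches the two displayed $1$-forms term by term --- exactly the computation you describe as your fallback. Your main route instead observes that, writing $\phi(z)=\im(z_{n+1})-|z'|^2$, the function $\kappa=-\log\phi$ is an $\HH_n$-invariant global K\"ahler potential, and extracts the Hamiltonian as $\mu_X=-\tfrac12\,d^c\kappa(X^\sharp)$ via Cartan's formula; I checked that with the convention $d^c=i(\overline{\partial}-\partial)$, so that $\dif d^c\kappa=2i\partial\overline{\partial}\kappa=2\omega$, one indeed gets $-\tfrac12\,d^c\kappa(X^\sharp)=\bigl(4\im(z'\cdot\overline{w'})-t\bigr)/\bigl(2\phi(z)\bigr)$, which is precisely the $\mu_X$ appearing in the paper's proof, so the argument closes. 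The one step you assert without verification is the potential identity $\omega=i\partial\overline{\partial}(-\log\phi)$, which the paper never states; it does hold, and follows in a few lines from $\partial\overline{\partial}\phi=-\sum_{j=1}^n\dif z_j\wedge\dif\overline{z}_j$ together with the expansion of $\phi^{-2}\,\partial\phi\wedge\overline{\partial}\phi$, which reproduces the four groups of terms in Proposition~\ref{prop:Siegel-Kahler}, so you should include that short check to make the argument complete. As for what each approach buys: the paper's matching argument is self-contained given its stated formula for $\omega$, at the cost of two long displayed computations; yours is shorter once the potential is identified, treats all $X\in\fh_n$ uniformly, explains conceptually where the formula for $\mu^{\HH_n}$ comes from, and generalizes immediately to any group of biholomorphisms preserving a K\"ahler potential. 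Your verification of the equivariance condition (2) coincides with the paper's: the $\HH_n$-invariance of $\phi$ reduces everything to the $\rho$-equivariance of $z\mapsto(4iz',1)$, which is the same two-line substitution in both arguments.
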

\begin{proof}
	For simplicity we will denote $\mu = \mu^{\HH_n}$ in this proof. Following Corollary~\ref{cor:moment-map-fh} and the remarks from its proof, for every $X \in \fh_n$, the function $\mu_X : D_{n+1} \rightarrow \R$ is given by
	\[
		\mu_X(z) = \langle\mu(z),X\rangle
	\]
	where $\langle\cdot,\cdot\rangle$ denotes the inner product considered in $\fh_n$. In particular, for every $X = (w',t) \in \fh_n$ we have
	\[
		\mu_X(z) = \frac{4\im(z'\cdot \overline{w'}) - t}{2(\im(z_{n+1}) - |z'|^2)}
			= \frac{2i(z'\cdot \overline{w'} - \overline{z'}\cdot w') + t}%
					{i(z_{n+1} - \overline{z}_{n+1}) + 2z'\cdot \overline{z'}}.
	\]
	We compute the differential of this function using Wirtinger derivatives to obtain 
	\begin{align*}
		\dif \big(\mu_X\big)_z &= \frac{1}{(\im(z_{n+1}) - |z'|^2)^2} \times \\
				\times
				\Bigg(&
					\sum_{j=1}^n 
						\Big(
							2\im(z'\cdot \overline{w'}) \overline{z}_j
							- i(\im(z_{n+1}) - |z'|^2) \overline{w}_j
							- \frac{1}{2}t\overline{z}_j 
						\Big) \dif z_j  \\
					&+ \sum_{j=1}^n 
						\Big(
							2\im(z'\cdot \overline{w'}) z_j
							+ i(\im(z_{n+1}) - |z'|^2) w_j
							- \frac{1}{2}tz_j 
						\Big) \dif \overline{z}_j  \\
					&+ i \Big(\im(z'\cdot\overline{w'}) 
							- \frac{1}{4} t \Big) \dif z_{n+1}
					 - i \Big(\im(z'\cdot\overline{w'}) 
					 		- \frac{1}{4} t \Big) \dif \overline{z}_{n+1}
				\Bigg)
	\end{align*}
	for every $z \in D_{n+1}$. Also, using Proposition~\ref{prop:Siegel-Kahler} and Lemma~\ref{lem:X-sharp} we obtain the following after some manipulations
	\begin{align*}
		\omega_z&(X^\sharp_z,\cdot) = \frac{1}{(\im(z_{n+1}) - |z'|^2)^2} \times \\
			\times
			\Bigg(&
				-\Big(i(z'\cdot\overline{w'} - \overline{z'}\cdot w') 
				+ \frac{1}{2}t\Big) \sum_{j=1}^n \overline{z}_j \dif z_j 
				- i(\im(z_{n+1}) - |z'|^2) \sum_{j=1}^n \overline{w}_j \dif z_j  \\
				&-\Big(i(z'\cdot\overline{w'} - \overline{z'}\cdot w') 
				+ \frac{1}{2}t\Big) \sum_{j=1}^n z_j \dif \overline{z}_j
				+ i(\im(z_{n+1}) - |z'|^2) \sum_{j=1}^n w_j \dif \overline{z}_j  \\
				&+ i\Big(\frac{1}{2i}(z'\cdot\overline{w'} - \overline{z'}\cdot w')
						- \frac{1}{4} t \Big) \dif z_{n+1} \\
				&- i\Big(\frac{1}{2i}(z'\cdot\overline{w'} - \overline{z'}\cdot w')
						- \frac{1}{4} t \Big) \dif \overline{z}_{n+1}
			\Bigg).
	\end{align*}
	Comparing both of these expressions we easily conclude that 
	\[
		\dif \big(\mu_X\big)_z = \omega_z(X^\sharp_z,\cdot),
	\]
	for every $X \in \fh_n$ at every $z \in D_{n+1}$.
	
	It remains to prove the $\HH_n$-equivariance of $\mu$ with respect to the representation $\rho$ transpose to $\Ad_{\HH_n}$. Since the map $z \mapsto \im(z_{n+1}) - |z'|^2$ is already $\HH_n$-invariant (see Proposition~\ref{prop:Hn-action}), it is enough to prove that the map $\mu_0 : D_{n+1} \rightarrow \fh_n$ defined by $\mu_0(z) = (4iz',1)$ is $\HH_n$-equivariant. Hence, we compute for $h = (w',t) \in \HH_n$
	\begin{align*}
		\mu_0(h\cdot z) 
			&= \mu_0(z' + w', z_{n+1} + t + 2i z'\cdot \overline{w'} + i |w'|^2) \\
			&= (4i(z'+w'),1) = (4iz' + 4iw', 1) 
				= \rho(w',t)(4iz',1) \\
			&= \rho(w',t)(\mu_0(z)),
	\end{align*}
	which holds for every $z \in D_{n+1}$. This completes the proof that $\mu$ is a moment map for the $\HH_n$-action.
\end{proof}

We now obtain moment maps for the subgroups of $\HH_n$. Theorem~\ref{thm:Heisenberg-moment-map} and Proposition~\ref{prop:moment-map-subgroups} will provide the main tools to achieve this. First we consider all such subgroups.

\begin{proposition}\label{prop:Heisenberg-subgroups}
	The following is a complete list of the connected Lie subgroups of~$\HH_n$.
	\begin{enumerate}
		\item $V \times \R$, where $V \subset \C^n$ is a real subspace.
		\item $H_{V,f} = \mathrm{Graph}(f)$, the graph of a real linear functional $f : V \rightarrow \R$, where $V \subset \C^n$ is an isotropic (real) subspace for the canonical symplectic structure of $\C^n$.
	\end{enumerate}
\end{proposition}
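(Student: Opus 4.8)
The plan is to exploit the fact, recorded in Proposition~\ref{prop:Heisenberg-properties}, that the exponential map $\exp : \fh_n \to \HH_n$ is the identity. By Lie's correspondence the connected Lie subgroups of $\HH_n$ are in bijection with the Lie subalgebras of $\fh_n$, and since $\exp$ is a global diffeomorphism (indeed the identity map), the connected subgroup integrating a subalgebra $\fs$ is simply $\fs$ itself viewed as a subset of $\HH_n$. Thus the problem reduces entirely to classifying the Lie subalgebras of $\fh_n$, after which one reads off the subgroups and checks directly that each candidate is closed under the product of $\HH_n$ (so that it is genuinely a subgroup, and being a linear subspace it is automatically connected).

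First I would isolate the decisive algebraic feature of $\fh_n$: writing $\sigma(w',z') = \im(w'\cdot\overline{z'})$ for the standard symplectic form on $\C^n \simeq \R^{2n}$, the brackets from Proposition~\ref{prop:Heisenberg-properties} read $[(w',t),(z',s)] = (0, 4\sigma(w',z'))$, so that $[\fh_n,\fh_n] = Z(\fh_n) = \{0\}\times\R$, the one-dimensional center. Let $\fs \subseteq \fh_n$ be a linear subspace. Since $\fs \cap Z(\fh_n)$ is a subspace of the line $\{0\}\times\R$, it equals either $\{0\}$ or all of $\{0\}\times\R$, which produces a clean and exhaustive dichotomy.

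In the first case $\{0\}\times\R \subseteq \fs$, whence $[\fs,\fs] \subseteq [\fh_n,\fh_n] = Z(\fh_n) \subseteq \fs$ holds automatically; so every such subspace is a subalgebra, and writing $V$ for the image of $\fs$ under the projection $\fh_n \to \C^n$ one checks that $\fs = V \times \R$, giving the subgroups of type (1). In the second case $\fs \cap Z(\fh_n) = \{0\}$, so the projection restricted to $\fs$ is injective and $\fs$ is the graph of a real linear functional $f : V \to \R$, with $V$ again the image of $\fs$ in $\C^n$. Here the subalgebra condition is genuine: the bracket $(0, 4\sigma(w_1',w_2'))$ of two elements of $\fs$ is central, so it lies in $\fs$ precisely when it vanishes, i.e. when $\sigma(w_1',w_2') = 0$; requiring this for all $w_1',w_2' \in V$ forces $V$ to be isotropic, and conversely an isotropic $V$ makes $\fs = \mathrm{Graph}(f)$ abelian and hence a subalgebra. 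This yields the subgroups of type (2), and a short computation with the product law confirms that both $V\times\R$ and $\mathrm{Graph}(f)$ (with $V$ isotropic) are closed under multiplication.

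I expect the only real subtlety to lie in the second case: one must recognize that the isotropy of $V$ is exactly the obstruction to $\mathrm{Graph}(f)$ being a subalgebra, while $f$ itself is otherwise completely unconstrained. Everything else---the reduction to subalgebras via $\exp = \mathrm{id}$, the exhaustiveness of the dichotomy on $\fs \cap Z(\fh_n)$, and the closure of each candidate under the Heisenberg product---is routine once the bracket is rewritten in terms of the symplectic form $\sigma$.
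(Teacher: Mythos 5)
Your proof is correct and follows essentially the same route as the paper: reduce to classifying Lie subalgebras of $\fh_n$ using that $\exp$ is the identity, project onto $\C^n$, and split into two cases (your dichotomy on $\fs \cap Z(\fh_n)$ is equivalent to the paper's dimension count $\dim\fh \in \{\dim V, \dim V+1\}$). The decisive step --- that in the graph case the bracket formula forces $V$ to be isotropic --- is the same computation, phrased directly rather than by contradiction as in the paper.
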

\begin{proof}
	Recall that the canonical symplectic structure of $\C^n$ is (a constant multiple) of $(z',w') \mapsto \im(z'\cdot \overline{w'})$. Hence, the sets listed in the statement are easily seen to be connected subgroups of $\HH_n$.
	
	To prove that we have listed all connected subgroups of $\HH_n$, it is enough to classify all Lie subalgebras of $\fh_n$ and consider the groups generated by their images under the exponential map of $\HH_n$, which in this case is the identity. 

	Let $\fh \subset \fh_n$ be a Lie subalgebra, and let us denote by $V$ and $V_0$ the projections of $\fh$ onto $\C^n$ and $\R$. Recall that $\fh_n = \C^n \times \R$ as a vector space. Hence, we have $\fh \subset V \times \R$ and
	\[
		\dim V \leq \dim \fh \leq \dim V + 1.
	\]
	If $\dim \fh = \dim V + 1$, then we necessarily have $\fh = V \times \R$, which is clearly a Lie subalgebra of $\fh_n$. After applying the exponential map (the identity map) we obtain a subgroup of the form (1) from the statement. 
	
	We now assume that $\dim \fh = \dim V$. We note that $\{0\} \times \R \not\subset \fh$. Otherwise, the projection of $\fh$ onto $V$ yields an isomorphism $\fh/(\{0\} \times \R) \simeq V$, which is a contradiction. Hence, there exists $f : V \rightarrow \R$ linear such that $\fh = \mathrm{Graph}(f)$. If $V$ is not an isotropic subspace of $\C^n$, then there exist $(z',f(z')), (w',f(w')) \in \fh$ such that $\im(z'\cdot \overline{w'}) \not= 0$. By Proposition~\ref{prop:Heisenberg-properties} we have $(0,4\im(z'\cdot \overline{w'})) \in \fh$, whose last coordinate is non-zero. This implies that $\{0\} \times \R \subset \fh$, which is a contradiction. Hence, $V$ is necessarily an isotropic subspace of $\C^n$. The image of $\fh = \mathrm{Graph}(f)$ under the exponential map (the identity as noted above) is now a subgroup of the form (2) from the statement.
\end{proof}

\begin{remark}\label{rmk:Heisenberg-subgroups}
	In the list of subgroups obtained in Proposition~\ref{prop:Heisenberg-subgroups}, case (1) allows $V$ to be either isotropic or non-isotropic. However, in both cases (1) and (2), the latter alternative determines whether the subgroup is Abelian or not. More precisely, it is easy to check that for both types of subgroups considered in Proposition~\ref{prop:Heisenberg-subgroups} the subgroup is Abelian if and only if $V$ is an isotropic subspace of $\C^n$. As a particular case, $\R^n \times \R$ yields the subgroup whose action on $D_{n+1}$ corresponds to the nilpotent MASG considered in \cite{QVUnitBall1,QVUnitBall2}. See also Remark~\ref{rmk:Hn-NilpotentAction}.
\end{remark}

The next result is a consequence of Theorem~\ref{thm:Heisenberg-moment-map} and Proposition~\ref{prop:moment-map-subgroups}. Also note that the particular cases can be considered because of Proposition~\ref{prop:Heisenberg-subgroups} and are obtained from an easy computation. In the rest of this work for every integer $1 \leq \ell \leq n-1$, inducing the decomposition $\C^n = \C^{n-\ell} \times \C^\ell$, we will write every $z' \in \C^n$ as $z' = (z_{(1)}, z_{(2)})$ where $z_{(1)} \in \C^{n-\ell}$ and~$z_{(2)} \in \C^\ell$.

\begin{corollary}\label{cor:subgroups-moment-map}
	Let $H \subset \HH_n$ be a connected subgroup. If $\pi_{\fh} : \fh_n \rightarrow \fh$ is the orthogonal projection for the natural inner product of $\fh_n = \C^n \times \R$, then the map $\mu^H = \pi_{\fh} \circ \mu^{\HH_n} : D_{n+1} \rightarrow \fh$ is a moment map for the $H$-action on $D_{n+1}$. In particular, we have the following moment maps.
	\begin{enumerate}
		\item For the center $Z(\HH_n)$:
			\[
				\mu^{Z(\HH_n)}(z) = 
					-\frac{1}{2(\im(z_{n+1}) - |z'|^2)}.
			\]
		\item For $H_{\R} = \R^{n+1}$:
			\[
				\mu^{H_{\R}}(z) = 
					-\frac{(-4\im(z'), 1)}{2(\im(z_{n+1}) - |z'|^2)}.
			\]
		\item For $H_{i\R} = i\R^n \times \R$:
			\[
				\mu^{H_{i\R}}(z) = 
					-\frac{(4\re(z'), 1)}{2(\im(z_{n+1}) - |z'|^2)}.
			\]
		\item For $1 \leq \ell \leq n-1$ and $H_{\ell,\R} = \C^{n-\ell} \times \R^\ell \times \R$:
			\[
				\mu^{H_{\ell,\R}}(z) = 
					-\frac{\big(4z_{(1)}, -4\im(z_{(2)}), 1\big)}%
						{2(\im(z_{n+1}) - |z'|^2)}.
			\]
		\item For $1 \leq \ell \leq n-1$ and $H_{\ell,i\R} = \C^{n-\ell} \times i\R^\ell \times \R$:
		\[
			\mu^{H_{\ell,i\R}}(z) = 
				-\frac{\big(4z_{(1)}, 4\re(z_{(2)}), 1\big)}%
					{2(\im(z_{n+1}) - |z'|^2)}.
		\]
	\end{enumerate}
\end{corollary}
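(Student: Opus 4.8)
The plan is to deduce the whole statement from Theorem~\ref{thm:Heisenberg-moment-map} together with Proposition~\ref{prop:moment-map-subgroups}, so that no essentially new moment-map computation is needed. For the general assertion I would invoke Proposition~\ref{prop:moment-map-subgroups} with the ambient group taken to be $\HH_n$, its moment map the map $\mu^{\HH_n}$ of Theorem~\ref{thm:Heisenberg-moment-map}, and the subgroup taken to be $H$. Since the inner product fixed on $\fh_n$ is the canonical one of $\R^{2n+1}$ (see Remark~\ref{rmk:rho-for-Heisenberg}) and $\pi_{\fh} : \fh_n \to \fh$ is exactly the orthogonal projection appearing in that proposition, we obtain at once that $\mu^H = \pi_{\fh}\circ\mu^{\HH_n}$ is a moment map for the $H$-action. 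That each of the five groups listed is a bona fide connected Lie subgroup follows from Proposition~\ref{prop:Heisenberg-subgroups}: the center is $\{0\}\times\R$, while $H_{\R}$, $H_{i\R}$, $H_{\ell,\R}$, $H_{\ell,i\R}$ are all of the form $V\times\R$ for $V = \R^n$, $i\R^n$, $\C^{n-\ell}\times\R^\ell$, $\C^{n-\ell}\times i\R^\ell$, respectively.

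What remains is purely computational. Because the scalar $-1/(2(\im(z_{n+1})-|z'|^2))$ in $\mu^{\HH_n}(z) = -(4iz',1)/(2(\im(z_{n+1})-|z'|^2))$ is independent of the direction in $\fh_n$ and $\pi_{\fh}$ is real-linear, the scalar factors out and it suffices to project the single vector $(4iz',1)$ onto each $\fh$. The central coordinate $1$ lies in $\{0\}\times\R\subset\fh$ for every subgroup on the list, hence is fixed by $\pi_{\fh}$; all the work is in projecting $4iz'\in\C^n$ onto the real subspace $V$. The one point to keep straight is how multiplication by $i$ interacts with these real projections: writing $z_j=\re(z_j)+i\im(z_j)$ gives $4iz_j=-4\im(z_j)+4i\,\re(z_j)$, so the component of $4iz'$ along real directions is $-4\im(z')$ and its component along imaginary directions is $4\,\re(z')$.

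Granting this, each case reads off immediately. Projecting onto $\{0\}\times\R$ annihilates $4iz'$ and gives (1). For $H_{\R}$ we retain the real part $-4\im(z')$, yielding (2); for $H_{i\R}$ we retain the imaginary part $4\,\re(z')$, yielding (3). Cases (4) and (5) simply combine these behaviours: on the full complex block $\C^{n-\ell}$ the projection is the identity so the $z_{(1)}$-part of $4iz'$ survives, while on the $\R^\ell$ (respectively $i\R^\ell$) factor one keeps $-4\im(z_{(2)})$ (respectively $4\,\re(z_{(2)})$), which are exactly the displayed formulas.

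I do not expect a genuine obstacle, since the result is a direct consequence of two already established facts. The only place that demands attention is the bookkeeping with multiplication by $i$ described above, together with the choice of orthonormal identification of each target Lie algebra $\fh$ with a standard Euclidean space: it is this choice---amounting to multiplying the surviving complex block by $-i$---that lets one record $4iz_{(1)}$ as $4z_{(1)}$ in cases (4) and (5). No new idea beyond these identifications is involved.
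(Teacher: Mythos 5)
Your proposal is correct and follows essentially the same route as the paper, which likewise obtains the corollary by combining Theorem~\ref{thm:Heisenberg-moment-map} with Proposition~\ref{prop:moment-map-subgroups} (using Proposition~\ref{prop:Heisenberg-subgroups} to confirm the listed groups are connected Lie subgroups) and then carrying out the orthogonal projections as an ``easy computation.'' Your observation that the literal projection of $(4iz',1)$ has first block $4iz_{(1)}$ in cases (4) and (5), so that the displayed $4z_{(1)}$ rests on a tacit orthogonal relabelling of the complex block by multiplication by $-i$, correctly pinpoints the one delicate bookkeeping issue the paper leaves implicit.
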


\begin{remark}\label{rmk:subgroups-moment-map}
	As a consequence of Corollary~\ref{cor:subgroups-moment-map} we observe that for every connected Lie subgroup $H \subset \HH_n$ there always exists a moment map for the $H$-action on $D_{n+1}$.
\end{remark}

\section{Toeplitz operators and moment maps}
\label{sec:Toeplitz-moment}
\subsection{Moment map symbols}
We recall and apply to our particular setup the following notion already considered in \cite{QSMomentMapJFA}.

\begin{definition}\label{def:moment-map-symbol}
	Let $H \subset \HH_n$ be a connected Lie subgroup with a moment map $\mu^H : D_{n+1} \rightarrow \fh$. A symbol $a \in L^\infty(D_{n+1})$ will be called a moment map symbol for the $H$-action or a $\mu^H$-symbol if there exists a measurable function $f$ defined on the image of $\mu^H$ such that $a = f \circ \mu^H$.
\end{definition}

\begin{remark}\label{rmk:moment-map-symbol}
	Theorem~\ref{thm:Heisenberg-moment-map} and Corollary~\ref{cor:subgroups-moment-map} yield explicit formulas for moment maps for the Heisenberg group $\HH_n$ and several of its subgroups. In the rest of this work, we will use the notation introduced in Corollary~\ref{cor:subgroups-moment-map} for the subgroups considered therein. It is interesting to note that some simple coordinates changes in the target of moment maps allow to consider equivalent characterizations of the corresponding symbols. For example, a function $a \in L^\infty(D_{n+1})$ is a $\mu^{Z(\HH_n)}$-symbol if and only if there is some essentially bounded measurable function $\widetilde{a}$ such that $a(z) = \widetilde{a}(\im(z_{n+1}) - |z'|^2)$ for almost every $z \in D_{n+1}$. Similarly, such a function $a$ is a $\mu^{H_{\ell,\R}}$-symbol if and only if there is an essentially bounded measurable function $\widetilde{a}$ such that $a(z) = \widetilde{a}\big(z_{(1)}, \im(z_{(2)}), \im(z_{n+1}) - |z'|^2\big)$ for almost every $z \in D_{n+1}$. We will freely use this simplified alternative expressions whenever it is useful while being careful to clarify any non-trivial claim.
\end{remark}

An interesting feature of moment maps is that, in some cases, they satisfy invariance conditions stronger than those required in its definition. The next result will be an important example.

\begin{proposition}\label{prop:Hn-orbits-moment-map-center}
	The moment map $\mu^{Z(\HH_n)}$ computed in Corollary~\ref{cor:subgroups-moment-map} is $\HH_n$-invariant. Furthermore, the sets of $\HH_n$-orbits in $D_{n+1}$ and the level sets of the moment map $\mu^{Z(\HH_n)}$ for the $\HH_n$-action are the same. More precisely, for every $X \in \mu^{Z(\HH_n)}(D_{n+1})$ there exists $z \in D_{n+1}$ such that
	\[
		\big(\mu^{Z(\HH_n)}\big)^{-1}(X) = \HH_n z,
	\]
	the $\HH_n$-orbit of $z$, and this exhaust the family of all $\HH_n$-orbits. In particular, for any given function $a \in L^\infty(D_{n+1})$ the following conditions are equivalent.
	\begin{enumerate}
		\item $a$ is a $\mu^{Z(\HH_n)}$-symbol.
		\item $a$ is $\HH_n$-invariant.
	\end{enumerate}
\end{proposition}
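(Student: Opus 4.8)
The plan is to establish the three assertions in the stated order, deriving the symbol equivalence from the identification of $\HH_n$-orbits with level sets of $\mu^{Z(\HH_n)}$.

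First, the $\HH_n$-invariance of $\mu^{Z(\HH_n)}$ is immediate: by Corollary~\ref{cor:subgroups-moment-map}(1) the map $\mu^{Z(\HH_n)}$ depends on $z$ only through the quantity $\im(z_{n+1}) - |z'|^2$, which Proposition~\ref{prop:Hn-action} shows is $\HH_n$-invariant. Hence $\mu^{Z(\HH_n)}(h\cdot z) = \mu^{Z(\HH_n)}(z)$ for all $h \in \HH_n$ and all $z \in D_{n+1}$. Note that this is a genuine strengthening of the equivariance guaranteed by the moment map axioms, since a priori $\mu^{Z(\HH_n)}$ is only required to be equivariant under $Z(\HH_n)$.

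Next, to identify orbits with level sets, the invariance already gives one inclusion: every $\HH_n$-orbit is contained in a single level set of $\mu^{Z(\HH_n)}$. The substance is the reverse inclusion, namely that $\HH_n$ acts transitively on each level set. Fixing $z, w \in D_{n+1}$ with $\mu^{Z(\HH_n)}(z) = \mu^{Z(\HH_n)}(w)$, equivalently $\im(z_{n+1}) - |z'|^2 = \im(w_{n+1}) - |w'|^2$, I would solve the equation $(v',t)\cdot z = w$ directly. The first coordinate forces $v' = w' - z'$, and the second forces $t = w_{n+1} - z_{n+1} - 2i z'\cdot\overline{v'} - i|v'|^2$. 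The only nontrivial point is to verify that this $t$ is real. Computing $\im(t)$ via $\im(i\zeta) = \re(\zeta)$ and substituting $v' = w' - z'$ collapses, after the routine identity $2\re(z'\cdot\overline{v'}) + |v'|^2 = |w'|^2 - |z'|^2$, to $\im(t) = (\im(w_{n+1}) - |w'|^2) - (\im(z_{n+1}) - |z'|^2) = 0$. Thus $(v',t) \in \HH_n$ and carries $z$ to $w$, which proves transitivity. Since any $X$ in the image of $\mu^{Z(\HH_n)}$ has nonempty level set, choosing any $z$ there yields $\big(\mu^{Z(\HH_n)}\big)^{-1}(X) = \HH_n z$, and as $X$ ranges over the image these exhaust all $\HH_n$-orbits.

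Finally, for the symbol equivalence, the implication (1)$\Rightarrow$(2) is immediate: if $a = f \circ \mu^{Z(\HH_n)}$, then the invariance of $\mu^{Z(\HH_n)}$ forces $a(h\cdot z) = a(z)$. For (2)$\Rightarrow$(1), an $\HH_n$-invariant $a$ is constant along $\HH_n$-orbits, which by the preceding step are exactly the level sets of $\mu^{Z(\HH_n)}$; hence $a$ factors as $a = f \circ \mu^{Z(\HH_n)}$ for a uniquely determined $f$ on the image. To guarantee that $f$ is measurable I would exhibit a continuous section of $\mu^{Z(\HH_n)}$, for instance $s \mapsto (0, is)$, and set $f$ to be $a$ precomposed with it; the transitivity established above ensures this reproduces $a$. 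The main obstacle I anticipate is not the transitivity computation, which is routine once one commits to solving $(v',t)\cdot z = w$, but rather the careful handling of the almost-everywhere caveats in the (2)$\Rightarrow$(1) direction, since $a \in L^\infty(D_{n+1})$ and invariance holds only up to null sets; the explicit reformulation from Remark~\ref{rmk:moment-map-symbol}, namely that $\mu^{Z(\HH_n)}$-symbols are precisely the functions of $\im(z_{n+1}) - |z'|^2$, is the cleanest way to discharge this point.
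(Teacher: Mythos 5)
Your proposal is correct and follows essentially the same route as the paper: the invariance of $\mu^{Z(\HH_n)}$ is read off from its formula together with Proposition~\ref{prop:Hn-action}, and transitivity of $\HH_n$ on each level set is established by producing the explicit group element (the paper writes down a manifestly real $t$ and verifies $(\zeta,t)\cdot w = z$, whereas you solve $(v',t)\cdot z = w$ and check $\im(t)=0$ via the same identity $2\re(z'\cdot\overline{v'})+|v'|^2 = |w'|^2-|z'|^2$ --- an equivalent computation). Your added care about measurability and the almost-everywhere caveat in the direction (2)$\Rightarrow$(1) is a point the paper's proof leaves implicit (``the last part of the statement is now clear''), and your appeal to the transversal $s \mapsto (0',is)$ and Remark~\ref{rmk:moment-map-symbol} discharges it properly.
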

\begin{proof}
	By the formula for $\mu^{Z({\HH_n})}$ from  Corollary~\ref{cor:subgroups-moment-map} and by Proposition~\ref{prop:Hn-action} its expression is $\HH_n$-invariant, and so it follows that every level set of this function is a union of $\HH_n$-orbits and that every $\HH_n$-orbit is contained in a level set. Hence, it is enough to prove that every level set of $\mu^{Z(\HH_n)}$ consists of a single $\HH_n$-orbit.
	
	Let $z, w \in D_{n+1}$ be such that $\mu^{Z(\HH_n)}(z) = \mu^{Z(\HH_n)}(w)$, which is equivalent to
	\[
		\im(z_{n+1}) - |z'|^2 = \im(w_{n+1}) - |w'|^2.
	\]
	If we choose $(\zeta,t) \in \HH_n$ where 
	\[
		\zeta = z' - w', \quad 
			t = \re(z_{n+1}) - \re(w_{n+1}) 
					+ 2 \im(w'\cdot \overline{(z' - w')}),
	\]
	then we can compute as follows
	\begin{align*}
		(\zeta,t)\cdot w 
			=&\; \big(z', w_{n+1} 
					+ \re(z_{n+1}) - \re(w_{n+1}) 
						+ 2 \im(w'\cdot \overline{(z' - w')}) \\
				&+ 2i w'\cdot \overline{(z' - w')} 
					+ i |z' - w'|^2\big)  \\
			=&\; \big(z', \re(z_{n+1}) + i\im(w_{n+1})  
					+ 2 \im(w'\cdot \overline{z'})   \\
				&+ 2i w'\cdot \overline{z'} - 2i|w'|^2
					+ i|z'|^2 + i|w'|^2 - 2i\re(w'\cdot\overline{z'})
					\big) \\
			=&\; \big(z', \re(z_{n+1}) + i(\im(w_{n+1}) - |w'|^2)
				+ i|z'|^2 \big) \\
			=&\; \big(z', \re(z_{n+1}) + i \im (z_{n+1}) \big) 
			= z,
	\end{align*}
	where we have used in the second to last line the identity that comes from the fact that $z$ and $w$ lie in the same level set. This completes the proof of our claim on level sets and orbits. The last part of the statement is now clear.
\end{proof}

\begin{remark}\label{rmk:Hn-orbits-boundary}
	The proof of Proposition~\ref{prop:Hn-orbits-moment-map-center} allows to improve the conclusions of Proposition~\ref{prop:Hn-action}. From the former it follows that the $\HH_n$-orbits in $\C^{n+1}$ are precisely the level sets of the map $z \mapsto \im(z_{n+1}) - |z'|^2$ defined on $\C^{n+1}$. Since the $\HH_n$-action clearly has no fixed points it follows that the group $\HH_n$ can be identified with any such orbit. We also note that the $\HH_n$-orbit defined by the condition $\im(z_{n+1}) = |z'|^2$ is precisely the topological boundary of $D_{n+1}$. For this reason the Heisenberg group $\HH_n$ is sometimes thought of as the topological boundary of the Siegel domain $D_{n+1}$.
\end{remark}

Through the examples considered above, we observe that there are two types of symbols that are useful. We fix a notation for them in the next definition.

\begin{definition}\label{def:two-types-symbols}
	Let $H \subset \HH_n$ be a connected Lie subgroup. We will denote by $L^\infty(D_{n+1})^H$ the space of $H$-invariant symbols and by $L^\infty(D_{n+1})^{\mu^H}$ the space of $\mu^H$-symbols, where $\mu^H$ is a moment map for the $H$-action on $D_{n+1}$.
\end{definition}

\begin{remark}\label{rmk:two-types-symbols}
	With the notation introduced in Definition~\ref{def:two-types-symbols}, the second claim from Proposition~\ref{prop:Hn-orbits-moment-map-center} can be subsumed in the following expression
	\[
		L^\infty(D_{n+1})^{\HH_n} = L^\infty(D_{n+1})^{\mu^{Z(\HH_n)}}.
	\]
	However, it is not known to the authors whether similar formulas hold for other subgroups $H \subset \HH_n$. More precisely and within our setup, we can ask whether a given invariance condition is equivalent to being a function of a moment map (for a possible different group) and conversely. Such problems, although interesting, are beyond the scope of this work.
\end{remark}

\subsection{Group-moment coordinates for $\HH_n$}
\label{subsec:group-moment-coordinates}
We will now introduce a set of coordinates on $D_{n+1}$ obtained from the $\HH_n$-action and (a subset of) its moment map. This will turn out to be very useful in the description of Toeplitz operators with $\HH_n$-invariant symbols acting on $\cA^2_\lambda(D_{n+1})$.

From now on, we will consider the function $H : D_{n+1} \rightarrow \R_+$ defined by
\[
	H(z) = \frac{1}{\im(z_{n+1}) - |z'|^2}. 
\]
It follows from Theorem~\ref{thm:Heisenberg-moment-map} and Corollary~\ref{cor:subgroups-moment-map} that $H$ is, up to a constant, the last coordinate of the moment map for the $\HH_n$-action and the moment map of the $Z(\HH_n)$-action. Since the function $H$ has as level sets the $\HH_n$-orbits (see Proposition~\ref{prop:Hn-orbits-moment-map-center}), the values of $H$ and the $\HH_n$-orbits can be considered as complementary values that may allow to build coordinates for $D_{n+1}$. To achieve this we will introduce maps $\sigma : \R_+ \rightarrow D_{n+1}$ and $\rho : D_{n+1} \rightarrow \HH_n$, so that $\R_+$ can be seen as a coordinate submanifold of $D_{n+1}$ and so that the elements of $D_{n+1}$ have $\HH_n$-coordinates. For this to hold, we impose the conditions
\[
	H(\sigma(r)) = r, \quad z = \rho(z)\cdot \sigma(H(z)),
\]
for all $r \in \R_+$ and $z \in D_{n+1}$, in order to make $H$ and $\sigma$ compatible as coordinates and involve the $\HH_n$-action, respectively. It is easy to very that the functions defined~by
\[
	\sigma(r) = \Big(0',\frac{i}{r}\Big), \quad \rho(z) = (z',\re(z_{n+1})),
\]
satisfy the conditions required above. From now on, we will consider $\sigma$ and $\rho$ so defined. The next result shows that these functions can be put together to obtain a coordinate system for $D_{n+1}$. It turns out that the weighted measures on $D_{n+1}$ are particularly simple in these coordinates.

\begin{proposition}\label{prop:group-moment-coordinates}
	The functions given by
	\begin{align*}
		\kappa : &\;\HH_n \times \R_+ \longrightarrow D_{n+1} \\
		&\kappa(w',t,r) = (w',t)\cdot \sigma(r) 
			= \Big(w', t + \frac{i}{r} + i|w'|^2\Big) \\
		\tau : &\;D_{n+1} \longrightarrow \HH_n \times \R_+ \\
		&\tau(z) = (\rho(z), H(z)) 
			= \Big(z', \re(z_{n+1}), \frac{1}{\im(z_{n+1}) - |z'|^2}\Big),
	\end{align*}
	are smooth and inverse of each other. In particular, they are both diffeomorphisms. Furthermore, the push-forward under $\tau$ of the weighted measure $v_\lambda$ is the measure on $\HH_n \times \R_+$ given by
	\[
		\dif \nu_\lambda(w',t,r) 
			= \frac{c_\lambda}{4r^{\lambda + 2}} 
				\dif w' \dif t \dif r,
	\]
	for every $\lambda > -1$.
\end{proposition}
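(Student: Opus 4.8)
The plan is to verify the two claims separately: first that $\kappa$ and $\tau$ are mutually inverse smooth maps, and second that $\tau_* v_\lambda = \nu_\lambda$. For the first claim, smoothness of both maps is immediate from their explicit polynomial-and-rational formulas (with $r > 0$ and $\im(z_{n+1}) - |z'|^2 > 0$ ensuring we never divide by zero), so the only real content is the inverse relation. First I would substitute $\tau$ into $\kappa$ and vice versa and simplify. Concretely, to check $\kappa \circ \tau = \mathrm{id}$, I would apply $\kappa$ to $\tau(z) = (z', \re(z_{n+1}), H(z))$ and use the formula $\kappa(w',t,r) = (w', t + \frac{i}{r} + i|w'|^2)$; the first component returns $z'$ immediately, and the last component becomes $\re(z_{n+1}) + i(\im(z_{n+1}) - |z'|^2) + i|z'|^2 = z_{n+1}$, as desired. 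The reverse composition $\tau \circ \kappa = \mathrm{id}$ is an equally direct substitution using $H(\sigma(r)) = r$ and $\rho((w',t)\cdot\sigma(r)) = (w',t)$, which follow from the defining conditions imposed on $\sigma$ and $\rho$.

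For the measure computation, the strategy is to pull the measure $v_\lambda$ back through the diffeomorphism $\kappa$ (equivalently, compute the pushforward under $\tau = \kappa^{-1}$) via the change-of-variables formula. Writing $z = \kappa(w',t,r)$, I would compute the Jacobian of $\kappa$ with respect to the real coordinates $(w', t, r)$ against the real coordinates on $\C^{n+1} = \C^n \times \C$. The key observation that keeps this tractable is that $\kappa$ is affine in $(w',t)$ for fixed $r$: the first $n$ complex coordinates of $z$ equal $w'$ outright, $\re(z_{n+1}) = t$, and $\im(z_{n+1}) = \frac{1}{r} + |w'|^2$. Thus the real Jacobian matrix is block-triangular once one orders the coordinates appropriately, and the only off-diagonal dependence (the $|w'|^2$ term in $\im(z_{n+1})$) sits in a position that does not affect the determinant because the $t$-to-$\re(z_{n+1})$ entry is $1$ and the $w'$ block is the identity.

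The essential computation is therefore the single entry $\partial(\im z_{n+1})/\partial r = -1/r^2$, which yields $|\det D\kappa| = 1/r^2$, so that $\dif z = \frac{1}{r^2}\,\dif w'\,\dif t\,\dif r$ under $\kappa$. It then remains to substitute the weight factor: since $\im(z_{n+1}) - |z'|^2 = 1/r$ along the image, the density $\frac{c_\lambda}{4}(\im(z_{n+1}) - |z'|^2)^\lambda\,\dif z$ becomes $\frac{c_\lambda}{4} r^{-\lambda} \cdot r^{-2}\,\dif w'\,\dif t\,\dif r = \frac{c_\lambda}{4 r^{\lambda+2}}\,\dif w'\,\dif t\,\dif r$, which is exactly $\nu_\lambda$. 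I expect the main obstacle to be bookkeeping the Jacobian carefully, namely confirming that the $|w'|^2$ cross-term genuinely drops out of the determinant; the cleanest way to dispatch this is to note that $\kappa$ factors as the composition of the $r$-dependent map $(w',t,r) \mapsto (w', t, \frac{1}{r} + |w'|^2)$ followed by the identification of $(w', \re, \im)$ with $z$, and the former has a triangular derivative whose diagonal entries in the $(w',t)$-directions are all $1$. With the orientation and positivity of all factors accounted for, the stated formula follows.
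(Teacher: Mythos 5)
Your proposal is correct and follows essentially the same route as the paper: direct substitution to verify that $\kappa$ and $\tau$ are mutually inverse, then a change-of-variables Jacobian computation (exploiting the triangular structure so the $|w'|^2$ cross-term drops out) giving $\dif z = r^{-2}\,\dif w'\,\dif t\,\dif r$, followed by substituting $\im(z_{n+1})-|z'|^2 = 1/r$ into the weight. The only cosmetic difference is that you compute $\det(\dif\kappa)$ directly, whereas the paper computes $\det(\dif\tau) = r^2$ in rectangular coordinates and inverts it; the two are trivially equivalent.
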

\begin{proof}
	A straightforward computation shows that both functions are indeed inverses of each other, and so they yield a set of coordinates. It is also easy to show using rectangular coordinates that we have
	\[
		|\det(\dif \tau_{(x,y)})| = \frac{1}{(y_{n+1} - |x'|^2 - |y'|^2)^2}
			= r^2,
	\]
	where $z' = x' + iy'$ and $z_{n+1} = x_{n+1} + i y_{n+1}$. Note that for the last identity we have used the change of coordinates corresponding to $\kappa$ and $\tau$. We conclude that the Lebesgue measure $\dif z$ on $D_{n+1}$ in the coordinates $(w',t,r)$ is given by
	\begin{align*}
		\dif z &= |\det(\dif \kappa_{(w',t,r)})| \dif w' \dif t \dif r \\
			&= |\det(\dif \tau_{\kappa(w',t,r)})|^{-1} \dif w' \dif t \dif r
			= \frac{1}{r^2} \dif w' \dif t \dif r.
	\end{align*}
	On the other hand, in terms of the coordinates $(w',t,r)$, the weight applied to the Lebesgue measure to obtain $v_\lambda$ is given by
	\[
		\frac{c_\lambda}{4} (\im(z_{n+1}) - |z'|^2)^\lambda = 
			\frac{c_\lambda}{4r^\lambda}.
	\]
	Hence, the formula in the statement is now clear.
\end{proof}

\begin{remark}\label{rmk:group-moment-coordinates}
	The coordinates on $D_{n+1}$ given by $\kappa$ and $\tau$ in Proposition~\ref{prop:group-moment-coordinates} are particularly well-suited to the $\HH_n$-action and a moment map of the $Z(\HH_n)$-action. By its very definition, the map $\kappa$ is $\HH_n$-equivariant for the action on the domain on the first factor only and the last coordinate of $\tau$ is basically a moment map for the $Z(\HH_n)$-action, the center of $\HH_n$. For this reason, in the rest of these work we will refer to the coordinates defined by $\kappa$ and $\tau$ as the \textbf{group-moment coordinates} associated to the $\HH_n$-action on $D_{n+1}$.
\end{remark}

From now on and with the notation of Proposition~\ref{prop:group-moment-coordinates}, we will denote by $U_0$ the map given by the assignment
\[
	f \longmapsto f\circ \kappa,
\]
that takes complex-valued functions defined on $D_{n+1}$ onto corresponding ones defined on $\HH_n \times \R_+$. In other words, $U_0$ is the change of coordinates, from $D_{n+1}$ to $\HH_n \times \R_+$, corresponding to the group-moment coordinates associated to $\HH_n$. We observe that this assignment does not require any further assumption on the functions involved and that its inverse $U_0^{-1}$ is given by
\[
	f \longmapsto f\circ \tau.
\]
Both maps have some important properties. First, they map smooth functions onto smooth functions. Secondly, the restriction of $U_0$ to $v_\lambda$-square-integrable functions yields a unitary map $L^2(D_{n+1}, v_\lambda) \rightarrow L^2(\HH_n \times \R_+, \nu_\lambda)$ that we will denote with the same symbol. Both of these claims are consequence of Proposition~\ref{prop:group-moment-coordinates}.

The next result computes the Cauchy-Riemann equations on $D_{n+1}$ with respect to the group-moment coordinates associated to $\HH_n$.

\begin{lemma}\label{lem:CR-U0}
	For a smooth function $\varphi : \HH_n \times \R_+ \rightarrow \C$, where $\HH_n \times \R_+$ has coordinates denoted by $(w',t,r)$, the following sets of equations are equivalent.
	\begin{enumerate}
		\item For every $j = 1, \dots, n+1$:
			\[
				U_0 \frac{\partial}{\partial \overline{z}_j} 
					U_0^{-1} \varphi = 0.
			\]
		\item For every $j = 1, \dots, n$:
			\[
				\bigg(\frac{\partial}{\partial \overline{w}_j}
					+ w_j r^2 \frac{\partial}{\partial r}\bigg) 
					\varphi = 0
			\]
			and
			\[
				\bigg(i r^2 \frac{\partial}{\partial r} 
					- \frac{\partial}{\partial t} \bigg)
					\varphi = 0.
			\]
	\end{enumerate}
\end{lemma}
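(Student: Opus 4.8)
The plan is to compute the conjugated operators $U_0\frac{\partial}{\partial\overline{z}_j}U_0^{-1}$ directly via the multivariable Wirtinger chain rule, using the explicit formula for the inverse coordinate map $\tau$ from Proposition~\ref{prop:group-moment-coordinates}, and then to read off the two families of equations. First I would unwind the definitions: since $U_0^{-1}\varphi = \varphi\circ\tau$ and $U_0 f = f\circ\kappa$, the operator acts on a smooth $\varphi$ on $\HH_n\times\R_+$ by
\[
	U_0\frac{\partial}{\partial\overline{z}_j}U_0^{-1}\varphi
		= \Bigl(\frac{\partial}{\partial\overline{z}_j}(\varphi\circ\tau)\Bigr)\circ\kappa.
\]
Because $\kappa$ and $\tau$ are mutually inverse diffeomorphisms, this is a genuine first-order operator in the coordinates $(w',t,r)$, and it suffices to apply the chain rule to $\varphi\circ\tau$ and then rewrite the coefficients in the $(w',t,r)$ variables using $z_j=w_j$ and $\im(z_{n+1})-|z'|^2=1/r$.

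The key step is to compute the Wirtinger derivatives of the three coordinate functions appearing in $\tau$, namely $w_k=z_k$, $t=\re(z_{n+1})=\tfrac12(z_{n+1}+\overline{z}_{n+1})$, and $r=H(z)=(\im(z_{n+1})-|z'|^2)^{-1}$. Writing $\eta=\im(z_{n+1})-|z'|^2=\tfrac{1}{2i}(z_{n+1}-\overline{z}_{n+1})-\sum_k z_k\overline{z}_k$, so that $r=\eta^{-1}$, I would record $\partial\eta/\partial\overline{z}_j=-z_j$ for $j\le n$ and $\partial\eta/\partial\overline{z}_{n+1}=i/2$, whence $\partial r/\partial\overline{z}_j=-r^2\,\partial\eta/\partial\overline{z}_j$ produces exactly the factors $r^2 z_j = r^2 w_j$ (for $j\le n$) and $-ir^2/2$ (for $j=n+1$). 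The derivatives of $t$ are immediate. The multivariable Wirtinger chain rule
\[
	\frac{\partial}{\partial\overline{z}_j}(\varphi\circ\tau)
		=\sum_k\frac{\partial\varphi}{\partial\overline{w}_k}
			\frac{\partial\overline{w}_k}{\partial\overline{z}_j}
		+\frac{\partial\varphi}{\partial t}\frac{\partial t}{\partial\overline{z}_j}
		+\frac{\partial\varphi}{\partial r}\frac{\partial r}{\partial\overline{z}_j}
\]
(the holomorphic terms $\partial w_k/\partial\overline{z}_j$ vanish) then assembles everything. For $j=1,\dots,n$ only the $k=j$ term of the first sum together with the $r$-term survive, yielding $\bigl(\partial/\partial\overline{w}_j + w_j r^2\,\partial/\partial r\bigr)\varphi$; for $j=n+1$ all $\overline{w}_k$-terms drop out and one is left with $\tfrac12\bigl(\partial/\partial t - ir^2\,\partial/\partial r\bigr)\varphi$. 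Since a nonzero scalar does not affect the vanishing condition, the $n+1$ equations in (1) are equivalent to the $n$ equations plus the single equation in (2), which establishes the claimed equivalence.

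The computation carries no conceptual obstacle; the only care needed is the bookkeeping of the mixed real--complex chain rule on $\HH_n\times\R_+$, whose coordinates combine the complex variables $w'$ with the real variables $t$ and $r$. The cleanest route, and the one I would follow, is to keep $w'$ complex throughout and exploit the identity $\partial r/\partial\overline{z}_j=-r^2\,\partial\eta/\partial\overline{z}_j$, which is precisely what makes the factor $r^2$ appear uniformly and collapses both families of equations into the stated form.
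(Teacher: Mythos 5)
Your proposal is correct and takes essentially the same approach as the paper: both proofs compute $\frac{\partial}{\partial \overline{z}_j}(\varphi\circ\tau)$ by the Wirtinger chain rule applied to the coordinate change $\tau$ (with the key derivatives $\partial r/\partial\overline{z}_j = r^2 z_j$ for $j\le n$ and $\partial r/\partial\overline{z}_{n+1} = -ir^2/2$), and then apply $U_0$ to read off the two families of equations up to nonzero scalar factors. The only cosmetic difference is that the paper regards $\HH_n\times\R_+$ as an open subset of $\C^{n+1}$ via $(w',t,r)\mapsto(w',t+ir)$ and writes the result in terms of $\partial/\partial w_{n+1}$ and $\partial/\partial\overline{w}_{n+1}$, whereas you keep $t$ and $r$ as real variables throughout; the two computations coincide.
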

\begin{proof}
	Let us consider $\HH_n \times \R_+$ as an open subset of $\C^{n+1}$ through the assignment
	\[
		(w',t,r) \longmapsto (w', t+ir).
	\]
	Then, a straightforward application of the chain rule for the Wirtinger partial derivatives allows us to obtain the following formulas
	\begin{align*}
		\bigg(\frac{\partial}{\partial \overline{z}_j} 
			U_0^{-1} \varphi\bigg)&(z) =
				\frac{\partial \varphi}{\partial \overline{w}_j}(\tau(z)) \\
				&+ \frac{iz_j}{(\im(z_{n+1})-|z'|^2)^2}
				\bigg(
					\frac{\partial \varphi}{\partial w_{n+1}}(\tau(z))
					- \frac{\partial \varphi}{\partial \overline{w}_{n+1}}
					(\tau(z))
				\bigg) \\
		\bigg(\frac{\partial}{\partial \overline{z}_{n+1}} 
				U_0^{-1} \varphi\bigg)&(z) =
				\frac{1}{2}\bigg(
					\frac{\partial \varphi}{\partial w_{n+1}}(\tau(z))
					+ \frac{\partial \varphi}{\partial
						 \overline{w}_{n+1}}(\tau(z))
				\bigg) \\
				&+ \frac{1}{2(\im(z_{n+1})-|z'|^2)^2}\bigg(
					\frac{\partial \varphi}{\partial w_{n+1}}(\tau(z))
					- \frac{\partial \varphi}{\partial
						 \overline{w}_{n+1}}(\tau(z))
				\bigg),
	\end{align*}
	where $j = 1, \dots, n$. The equivalence is now obtained applying $U_0$.	
\end{proof}

We now consider the usual Fourier transform
\[
	\mfF(f)(\xi) = \frac{1}{\sqrt{2\pi}}	
		\int_\R f(t) e^{-i\xi t} \dif t,
\]
and define the unitary map $U_1 = I \otimes \mfF \otimes I$ acting on $L^2(\HH_n \times \R_+, \nu_\lambda)$ that applies the transform $\mfF$ on the variable $t$. We recall that $\HH_n \times \R_+$ has coordinates denoted by $(w',t,r)$. Then, by the usual properties of the Fourier transform the set of equations from Lemma~\ref{lem:CR-U0} are equivalent to
\begin{align}
	\bigg(\frac{\partial}{\partial \overline{w}_j}
	+ w_j r^2 \frac{\partial}{\partial r}\bigg) 
	\varphi &= 0 \quad j=1, \dots, n, \label{eq:CR-U1U0} \\
	\bigg(r^2 \frac{\partial}{\partial r} -\xi\bigg)
	\varphi &= 0. \notag
\end{align}
We now describe the general solution of these equations.

\begin{lemma}\label{lem:CR-U1}
	A function $\varphi : \HH_n \times \R_+ \rightarrow \C$ satisfies equations \eqref{eq:CR-U1U0} if and only if there is a function $\psi : \C^n \times \R \rightarrow \C$ holomorphic in its variable $w' \in \C^n$ such that
	\[
		\varphi(w',\xi,r) = e^{-\xi|w'|^2} e^{-\frac{\xi}{r}} \psi(w',\xi),
	\]
	for every $(w',\xi,r) \in \HH_n \times \R_+$.
\end{lemma}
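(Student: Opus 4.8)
The plan is to integrate the two families of partial differential equations in \eqref{eq:CR-U1U0} separately and then combine the results. Treating $\xi \in \R$ and the coordinates $w' \in \C^n$, $r \in \R_+$ as independent variables, I first examine the single equation
\[
	\bigg(r^2 \frac{\partial}{\partial r} - \xi\bigg) \varphi = 0,
\]
which at fixed $w'$ is an ordinary differential equation in $r$. Rewriting it as $\partial_r \varphi = (\xi/r^2)\varphi$ and integrating in $r$ (noting that $\int r^{-2}\,\dif r = -1/r$), I obtain that the general solution has the form $\varphi(w',\xi,r) = e^{-\xi/r} \, g(w',\xi)$ for some function $g$ that is independent of $r$. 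The factor $e^{-\xi/r}$ is exactly the $r$-dependence appearing in the statement.

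Next I substitute this form into the remaining $n$ equations. Since $e^{-\xi/r}$ does not depend on $w_j$ or $\overline{w}_j$, the operator $\partial/\partial\overline{w}_j$ acts only on $g$, while the operator $w_j r^2 \partial/\partial r$ acting on $e^{-\xi/r}$ produces a factor $w_j \xi$ (because $r^2 \partial_r e^{-\xi/r} = \xi e^{-\xi/r}$). Thus each equation reduces, after dividing out the nonvanishing factor $e^{-\xi/r}$, to
\[
	\bigg(\frac{\partial}{\partial \overline{w}_j} + \xi w_j\bigg) g = 0,
	\quad j = 1,\dots,n.
\]
This is a first-order linear system in the antiholomorphic derivatives. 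The idea is to remove the inhomogeneous term by an integrating factor: writing $g(w',\xi) = e^{-\xi |w'|^2} \psi(w',\xi)$ and using that $\partial/\partial\overline{w}_j$ applied to $e^{-\xi|w'|^2}$ yields $-\xi w_j e^{-\xi|w'|^2}$ (since $|w'|^2 = \sum_k w_k \overline{w}_k$), the system collapses to $e^{-\xi|w'|^2}\,\partial\psi/\partial\overline{w}_j = 0$ for each $j$. Hence $\psi$ is annihilated by every $\partial/\partial\overline{w}_j$, which is precisely the statement that $\psi$ is holomorphic in the variable $w' \in \C^n$ (with $\xi$ as a parameter).

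Combining the two steps gives $\varphi(w',\xi,r) = e^{-\xi|w'|^2} e^{-\xi/r} \psi(w',\xi)$ with $\psi$ holomorphic in $w'$, establishing the forward direction; the converse is a direct verification that a function of this form satisfies \eqref{eq:CR-U1U0}, which follows by reversing the same computations. I expect no serious obstacle here, since both equation families are essentially linear first-order equations that integrate explicitly. The only point requiring a little care is the order of the two integrations and the justification that the ``constant of integration'' in the $r$-equation may be taken as a smooth (indeed holomorphic-in-$w'$) function of the remaining variables; this is handled by treating $\varphi$ as smooth throughout and noting that the separation of variables respects the remaining derivatives. The main conceptual content is the recognition that the Gaussian integrating factor $e^{-\xi|w'|^2}$ converts the inhomogeneous $\overline{\partial}$-type system into the standard Cauchy–Riemann equations, thereby exhibiting the Fock-space weight that will feature in the subsequent direct integral decomposition.
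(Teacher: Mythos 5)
Your proposal is correct and follows essentially the same route as the paper: the paper defines $\psi = \varphi\, e^{\xi|w'|^2} e^{\xi/r}$ in one step and verifies $\partial\psi/\partial r = 0$ and $\partial\psi/\partial\overline{w}_j = 0$ using equations \eqref{eq:CR-U1U0}, which is exactly your integrating-factor computation, merely carried out simultaneously rather than sequentially (first in $r$, then in $\overline{w}_j$). The converse direction is likewise handled in the paper by the same direct Leibniz-rule verification you indicate.
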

\begin{proof}
	If $\varphi$ has the expression in terms of $\psi$ as in the statement, then by Leibniz rule we have
	\begin{align*}
		\frac{\partial \varphi}{\partial \overline{w}_j}(w',\xi,r)
			&= -\xi w_j \varphi(w',\xi,r) \\
		\frac{\partial \varphi}{\partial r}(w',\xi,r)
			&= \frac{\xi}{r^2} \varphi(w',\xi,r),
	\end{align*}
	where $j = 1, \dots, n$. And these are clearly equivalent to equations \eqref{eq:CR-U1U0}.
	
	Conversely, let us assume that $\varphi$ is a solution to \eqref{eq:CR-U1U0} and let us consider the function $\psi : \HH_n \times \R \rightarrow \C$ given by
	\[
		\psi(w',\xi,r) = \varphi(w',\xi,r) e^{\xi|w'|^2} e^{\frac{\xi}{r}}.
	\]
	It is enough to prove that $\psi$ is independent of $r$ and holomorphic in $w'$. For this we compute
	\[
		\frac{\partial \psi}{\partial r}(w',\xi,r) = 
			\frac{\partial \varphi}{\partial r}(w',\xi,r) 
				e^{\xi|w'|^2} e^{\frac{\xi}{r}}
			- \frac{\xi}{r^2} \varphi(w',\xi,r) e^{\xi|w'|^2} 
				e^{\frac{\xi}{r}} = 0,
	\]
	which vanishes because of the last equation from \eqref{eq:CR-U1U0}. We also have
	\begin{align*}
		\frac{\partial \psi}{\partial \overline{w}_j}(w',\xi,r) 
			&= \frac{\partial \varphi}{\partial \overline{w}_j}(w',\xi,r)
					e^{\xi|w'|^2} e^{\frac{\xi}{r}}
				+ \xi w_j \varphi(w',\xi,r) e^{\xi|w'|^2} e^{\frac{\xi}{r}} \\
			&= -w_j r^2 \frac{\partial \varphi}{\partial r}(w',\xi,r)
					e^{\xi|w'|^2} e^{\frac{\xi}{r}}
				+ \xi w_j \varphi(w',\xi,r) e^{\xi|w'|^2} e^{\frac{\xi}{r}} 
				= 0,
	\end{align*}
	where we have used again equations \eqref{eq:CR-U1U0}.
\end{proof}

\begin{remark}\label{rmk:group-moment-coord}
	In this subsection we have introduced two important constructions. Firstly, the change of variable given by the group-moment coordinates. Secondly, a Fourier transform on the (last) real variable of the Heisenberg group. These lead us to obtain a characterization of holomorphic functions on $D_{n+1}$ as suitable functions on $\HH_n \times \R_+$. Lemma~\ref{lem:CR-U1} yields the main result with this respect.
\end{remark}

\subsection{Toeplitz operators with $\HH_n$-invariant symbols}
\label{subsec:Toeplitz-HHn-invariant}
We will now consider Toeplitz operators whose symbols are $\HH_n$-invariant. Recall that Proposition~\ref{prop:Hn-orbits-moment-map-center} shows that such symbols may equivalently be described as $\mu^{\Z(\HH_n)}$-symbols (see also Remark~\ref{rmk:two-types-symbols}).

Building from the results in the previous subsection, we need to consider the square-integrability of the solutions to the equations \eqref{eq:CR-U1U0} obtained in Lemma~\ref{lem:CR-U1}. To achieve this, we introduce a functional renormalization in the next result.

\begin{lemma}\label{lem:Vlambda}
	Let us consider the measure given by
	\[
		\dif \eta (w',\xi) =
		\bigg(\frac{2\xi}{\pi}\bigg)^n e^{-2\xi|w'|^2}\dif w' \dif \xi,
	\]	
	on $\C^n \times \R_+$ and the measure $\nu_\lambda$ on $\HH_n \times \R_+$ defined in Proposition~\ref{prop:group-moment-coordinates}. Then, the map
	\begin{align*}
		V_\lambda : L^2(\C^n \times \R_+, \eta)
 			&\longrightarrow L^2(\HH_n \times \R_+, \nu_\lambda) \\
		(V_\lambda\psi)(w',\xi,r) &= 
			2\sqrt{\frac{\pi(2\xi)^{\lambda+n+1}}%
				{\Gamma(\lambda + n + 2)}}
					e^{-\xi|w'|^2} e^{-\frac{\xi}{r}}
					 \chi_{\R_+}(\xi) \psi(w',\xi),
	\end{align*}
	is a well-defined isometry whose adjoint operator is given by 
	\[
		(V_\lambda^*\varphi)(w',\xi) = 	
			\frac{\sqrt{(2\xi)^{\lambda-n+1}\Gamma(\lambda+n+2)}}%
				{2\sqrt{\pi}\Gamma(\lambda + 1)}
					e^{\xi|w'|^2}
				\int_0^\infty
					\frac{\varphi(w',\xi,r)
						e^{-\frac{\xi}{r}} \dif r}{r^{\lambda+2}}.
	\]
\end{lemma}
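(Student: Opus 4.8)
The plan is to verify the two assertions directly: that $V_\lambda$ preserves norms (hence is a well-defined isometry) and that the displayed formula is its Hilbert-space adjoint. Throughout I treat the target as the space of functions $\varphi(w',\xi,r)$ on $\C^n \times \R \times \R_+$ that are square-integrable against $\frac{c_\lambda}{4r^{\lambda+2}}\dif w' \dif \xi \dif r$, i.e.\ the measure $\nu_\lambda$ from Proposition~\ref{prop:group-moment-coordinates} with its post-Fourier middle variable written $\xi$; the image of $V_\lambda$ is supported on $\xi > 0$ because of the factor $\chi_{\R_+}(\xi)$.

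First I would establish the isometry. Writing out $\|V_\lambda\psi\|^2$ against $\nu_\lambda$ and pulling out the factors independent of $r$, the entire computation reduces to the single one-dimensional integral $\int_0^\infty e^{-2\xi/r} r^{-(\lambda+2)}\dif r$. The substitution $u = 1/r$ converts this into $\int_0^\infty e^{-2\xi u} u^\lambda \dif u = \Gamma(\lambda+1)(2\xi)^{-(\lambda+1)}$, a Gamma integral valid for $\xi > 0$ and $\lambda > -1$. Substituting back and simplifying the powers of $2\xi$ (the factor $(2\xi)^{\lambda+n+1}$ from the normalizing constant cancels against $(2\xi)^{\lambda+1}$, leaving $(2\xi)^n$), the scalar constants collapse via $c_\lambda = \Gamma(\lambda+n+2)/(\pi^{n+1}\Gamma(\lambda+1))$ to produce exactly $(2\xi/\pi)^n e^{-2\xi|w'|^2}$, which is the density of $\eta$. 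Hence $\|V_\lambda\psi\|_{\nu_\lambda}^2 = \|\psi\|_\eta^2$, giving both finiteness (well-definedness) and the isometry property at once.

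For the adjoint I would compute $\langle V_\lambda\psi, \varphi\rangle_{\nu_\lambda}$ and rearrange it into the form $\langle\psi, V_\lambda^*\varphi\rangle_\eta$. Inserting the definition of $V_\lambda\psi$ and using Fubini to carry out the $r$-integration against $\overline{\varphi(w',\xi,r)}$ (which, unlike in the norm computation, must stay inside the integral since $\varphi$ depends on $r$), one isolates the coefficient multiplying $\psi(w',\xi)$. Dividing out the density $(2\xi/\pi)^n e^{-2\xi|w'|^2}$ of $\eta$ to read off $\overline{(V_\lambda^*\varphi)}$, the exponential ratio $e^{-\xi|w'|^2}/e^{-2\xi|w'|^2} = e^{\xi|w'|^2}$ emerges, and the remaining scalar simplifies (again through $c_\lambda$ and the same cancellation of $2\xi$-powers, now yielding $(2\xi)^{(\lambda-n+1)/2}$) to the displayed prefactor $\sqrt{\pi(2\xi)^{\lambda-n+1}\Gamma(\lambda+n+2)}/(2\pi\Gamma(\lambda+1))$. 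Since this scalar is real, conjugating back recovers precisely the stated formula for $V_\lambda^*$.

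I expect the main obstacle to be bookkeeping rather than anything conceptual: keeping the several square-root normalizations, the competing powers of $2\xi$, and the Gamma-factor identity aligned so that the constants cancel exactly in both computations. A secondary technical point is justifying Fubini in the adjoint step; this is cleanest if performed first for $\psi$ and $\varphi$ in a dense class of compactly supported functions (where bounding $e^{-\xi/r}r^{-(\lambda+2)}$ gives absolute integrability) and then extended to all of $L^2$ by continuity, using the isometry bound for $V_\lambda$ already established in the first part.
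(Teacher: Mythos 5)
Your proposal is correct and follows essentially the same route as the paper: a direct computation of $\|V_\lambda\psi\|^2$ against $\nu_\lambda$, reduced via $u = 1/r$ to the Gamma integral $\int_0^\infty e^{-2\xi u}u^\lambda \dif u = \Gamma(\lambda+1)(2\xi)^{-(\lambda+1)}$ so that the constants collapse through $c_\lambda$ onto the density of $\eta$, followed by a Fubini rearrangement of $\langle V_\lambda\psi,\varphi\rangle_{\nu_\lambda}$ to read off $V_\lambda^*$ from the $\eta$-inner product, using that the resulting scalar prefactor is real. Your added care about justifying Fubini on a dense class is a sound (if minor) refinement that the paper leaves implicit.
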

\begin{proof}
	For a given $\psi \in L^2(\C^n \times \R_+, \eta)$ we compute as follows.
	\begin{align*}
		\|V_\lambda\psi\|^2 
			=&\; \int_{\C^n \times \R_+^2} 
				\frac{4\pi (2\xi)^{\lambda+n+1}}%
					{\Gamma(\lambda + n + 2)}
					e^{-2\xi|w'|^2} e^{-\frac{2\xi}{r}}
						|\psi(w',\xi)|^2
					\frac{c_\lambda}{4r^{\lambda + 2}} 
						\dif w' \dif \xi \dif r \\
			=&\; \frac{\pi c_\lambda}%
				{\Gamma(\lambda + n + 2)}
				\int_{\C^n \times \R_+}
					(2\xi)^{\lambda+n+1}
					e^{-2\xi|w'|^2} |\psi(w',\xi)|^2 \times \\
			&\times \bigg(\int_0^\infty
					e^{-\frac{2\xi}{r}}
					\frac{1}{r^{\lambda + 2}} \dif r
					\bigg) \dif w' \dif \xi \\
			=&\; \frac{1}{\pi^n \Gamma(\lambda + 1)}
				\int_{\C^n \times \R_+}
					(2\xi)^{\lambda+n+1}
					e^{-2\xi|w'|^2} |\psi(w',\xi)|^2 
					\frac{\Gamma(\lambda + 1)}{(2\xi)^{\lambda + 1}}
					\dif w' \dif \xi \\
			=&\; \int_0^\infty
				\bigg(
					\int_{\C^n} |\psi(w',\xi)|^2 
						\bigg(\frac{2\xi}{\pi}\bigg)^n
						e^{-2\xi|w'|^2} \dif w'
				\bigg) \dif \xi,
	\end{align*}
	and so the first claim has been proved. If we now also choose a function $\varphi \in L^2(\HH_n \times \R_+, \nu_\lambda)$, then we have
	\begin{align*}
		\langle V_\lambda \psi, &\varphi \rangle = \\
			=&\; \int_{\C^n \times \R_+^2}
				2\sqrt{
					\frac{\pi(2\xi)^{\lambda+n+1}}%
					{\Gamma(\lambda+n+2)}
					}
					e^{-\xi|w'|^2} e^{-\frac{\xi}{r}}
				\frac{\psi(w',\xi)
					\overline{\varphi(w',\xi,r)} 
					c_\lambda}{4r^{\lambda + 2}} 
					\dif w' \dif \xi \dif r \\
			=&\; \int_{\C^n \times \R_+}
				\psi(w',\xi)
				\bigg(
					\sqrt{
					\frac{\pi(2\xi)^{\lambda+n+1}}{\Gamma(\lambda+n+2)}
					} e^{\xi|w'|^2}
					\int_0^\infty
					\frac{\overline{\varphi(w',\xi,r)}
						e^{-\frac{\xi}{r}} \dif r}{r^{\lambda+2}}
				\bigg) \times \\
			&\times 
				\frac{\Gamma(\lambda + n + 2)}{2\pi^{n+1}\Gamma(\lambda + 1)} 
					e^{-2\xi|w'|^2}
					\dif w' \dif \xi \\
			=&\; \int_{\C^n \times \R_+}
				\psi(w',\xi)
					\bigg(
					\frac{\sqrt{(2\xi)^{\lambda-n+1}\Gamma(\lambda+n+2)}}%
						{2\sqrt{\pi}\Gamma(\lambda + 1)}
						 e^{\xi|w'|^2} \times \\
			&\times \int_0^\infty
					\frac{\overline{\varphi(w',\xi,r)}
						e^{-\frac{\xi}{r}} \dif r}{r^{\lambda+2}}
					\bigg) 
				\bigg(\frac{2\xi}{\pi}\bigg)^n e^{-2\xi|w'|^2}
				\dif w' \dif \xi, \\
	\end{align*}
	which yields the required expression for the adjoint operator $V_\lambda^*$.
\end{proof}

\begin{remark}\label{rmk:Vlambda}
	We observe that the map $V_\lambda$ from Lemma~\ref{lem:Vlambda} is defined using functions of the form considered in Lemma~\ref{lem:CR-U1}, except for the holomorphicity condition on the variable $w' \in \C^n$. In other words, Lemma~\ref{lem:Vlambda} and the map $V_\lambda$ take into account only the last equation from \eqref{eq:CR-U1U0}. The role of Lemma~\ref{lem:Vlambda} is to renormalize the functions with respect to the variable $\xi$ to obtain square-integrable functions and an isometric assignment from them. The next step is to consider the holomorphicity condition on the variable $w' \in \C^n$. For this we will make use of direct integrals of Hilbert spaces, specifically weighted Fock spaces. For the basic properties and definitions associated to the former we refer to \cite[Chapter~14]{KRvolII}. As for the latter, $\cF^2_\xi(\C^n)$ will denote the Fock space over $\C^n$ with weight $\xi > 0$, which is defined as the (closed) subspace of holomorphic functions belonging to $L^2(\C^n, (\xi/\pi)^n e^{-\xi|w'|^2} \dif w')$. 
\end{remark}

\begin{lemma}\label{lem:Wlambda-directint-Fock}
	The map
	\[
		W_\lambda : \int_{\R_+}^\oplus \cF^2_{2\xi}(\C^n) \dif \xi
			\longrightarrow L^2(\HH_n \times \R_+, \nu_\lambda),
	\]
	defined as the restriction of $V_\lambda$ given in Lemma~\ref{lem:Vlambda} (to the direct integral of the family $(\cF^2_{2\xi}(\C^n))_{\xi \in \R_+}$ over $\R_+$ with the Lebesgue measure) is an isometry with image $\big(U_1 U_0 \big)\big(\cA^2_\lambda(D_{n+1})\big)$.
\end{lemma}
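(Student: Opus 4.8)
The plan is to separate the statement into two independent assertions: that $W_\lambda$ is an isometry, and that its image is exactly $(U_1 U_0)(\cA^2_\lambda(D_{n+1}))$. The isometry claim will be immediate. Disintegrating the measure $\eta$ over $\R_+$ exhibits $L^2(\C^n\times\R_+,\eta)$ as the direct integral of the fiber spaces $L^2(\C^n, (2\xi/\pi)^n e^{-2\xi|w'|^2}\dif w')$, and since the fiber measure is precisely the one defining $\cF^2_{2\xi}(\C^n)$, the direct integral $\int_{\R_+}^\oplus \cF^2_{2\xi}(\C^n)\dif\xi$ is a closed subspace of $L^2(\C^n\times\R_+,\eta)$. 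As $W_\lambda$ is the restriction of the isometry $V_\lambda$ of Lemma~\ref{lem:Vlambda} to this subspace, it is itself an isometry, so the real content is the identification of the image.

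For the image I would first obtain an intrinsic description of $(U_1 U_0)(\cA^2_\lambda(D_{n+1}))$. Since $U_0$ and $U_1$ are unitary and $\cA^2_\lambda(D_{n+1})$ consists of the $L^2(v_\lambda)$ functions annihilated by every $\partial/\partial\overline{z}_j$, Lemma~\ref{lem:CR-U0} together with the intertwining of $\partial/\partial t$ with multiplication by $i\xi$ under $\mfF$ identifies $(U_1 U_0)(\cA^2_\lambda(D_{n+1}))$ with the set of $\varphi\in L^2(\HH_n\times\R_+,\nu_\lambda)$ satisfying equations \eqref{eq:CR-U1U0}. By Lemma~\ref{lem:CR-U1}, each such $\varphi$ has the form $\varphi(w',\xi,r)=e^{-\xi|w'|^2}e^{-\xi/r}\psi_0(w',\xi)$ with $\psi_0$ holomorphic in $w'$; this is exactly the shape of the functions $V_\lambda\psi$ up to the scalar renormalization $c(\xi)=2\sqrt{\pi(2\xi)^{\lambda+n+1}/\Gamma(\lambda+n+2)}$.

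To finish I would prove the two inclusions. For $(U_1 U_0)(\cA^2_\lambda(D_{n+1}))\subseteq\operatorname{image}(W_\lambda)$, I take $\varphi$ of the above form and set $\psi(w',\xi)=\psi_0(w',\xi)/c(\xi)$, so that $V_\lambda\psi=\varphi$; the isometry of $V_\lambda$ then forces $\psi\in L^2(\C^n\times\R_+,\eta)$, and fiberwise holomorphicity places $\psi$ in the Fock direct integral, whence $\varphi=W_\lambda\psi$. Conversely, for $\operatorname{image}(W_\lambda)\subseteq(U_1 U_0)(\cA^2_\lambda(D_{n+1}))$, any $W_\lambda\psi$ satisfies \eqref{eq:CR-U1U0}, so pulling it back through $U_1^{-1}$ and $U_0^{-1}$ produces a distributional solution of the Cauchy-Riemann system on $D_{n+1}$ lying in $L^2(v_\lambda)$; ellipticity (Weyl's lemma) upgrades it to a genuine holomorphic function, hence an element of $\cA^2_\lambda(D_{n+1})$.

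The main obstacle, and the step needing the most care, will be the passage between the $L^2$ category and the pointwise/smooth statements of Lemmas~\ref{lem:CR-U0} and \ref{lem:CR-U1}: one must justify that the Cauchy-Riemann system survives the Fourier transform in the distributional sense and then invoke elliptic regularity to turn weak solutions into holomorphic functions. A reassuring point to check along the way is that the cutoff $\chi_{\R_+}(\xi)$ in $V_\lambda$ is forced rather than imposed: evaluating $\int_0^\infty e^{-2\xi/r} r^{-(\lambda+2)}\dif r$ by the substitution $u=1/r$ gives $\Gamma(\lambda+1)/(2\xi)^{\lambda+1}$ for $\xi>0$ and diverges for $\xi\le 0$, so square-integrability in $r$ automatically annihilates the nonpositive frequencies, a Paley-Wiener phenomenon reflecting holomorphicity on the Siegel domain.
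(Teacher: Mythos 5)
Your proposal is correct and follows essentially the same route as the paper: realize the Fock direct integral as the closed subspace of $L^2(\C^n\times\R_+,\eta)$ of fiberwise holomorphic functions, characterize $\big(U_1U_0\big)\big(\cA^2_\lambda(D_{n+1})\big)$ via Lemmas~\ref{lem:CR-U0} and \ref{lem:CR-U1}, and match it against the image of $V_\lambda$ restricted to that subspace, with the cutoff $\chi_{\R_+}(\xi)$ forced by square-integrability exactly as the paper also notes. The only difference is one of rigor: you make explicit (distributional Cauchy-Riemann equations after the Fourier transform, then elliptic regularity to recover genuine holomorphy) the passage between the $L^2$ setting and the pointwise statements of those lemmas, a step the paper's proof treats as immediate.
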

\begin{proof}
	We observe that the subspace of functions $\psi \in L^2(\C^n \times \R_+, \eta)$ holomorphic in the variable $w' \in \C^n$ is a closed subspace and (the) direct integral of the family $(\cF^2_{2\xi}(\C^n))_{\xi \in \R_+}$ over $\R_+$ with the Lebesgue measure. To see this, we note first that for such a function $\psi$, Fubini's theorem yields
	\[
		\int_{\R_+}
			\bigg(
				\int_{\C^n} |\psi(w',\xi)|^2 
				\bigg(\frac{2\xi}{\pi}\bigg)^n
				e^{-2\xi|w'|^2} \dif w'
			\bigg) \dif \xi =
		\int_{\C^n \times \R_+} |\psi|^2 \dif \eta
			< \infty
	\]
	which implies that $\psi(\cdot,\xi) \in \cF^2_{2\xi}(\C^n)$, for almost every $\xi \in \R_+$. This same formula and elementary convergence theorems can be used to prove that the space of functions $\psi$, as described above, is closed. It is also a simple exercise to prove that such space is in fact the direct integral claimed to be.
	
	On the other hand, the (closed) subspace $\big(U_1 U_0 \big)\big(\cA^2_\lambda(D_{n+1})\big)$ of the Hilbert space $L^2(\HH_n\times \R_+, \nu_\lambda)$ consists precisely of functions on $\HH_n \times \R_+$ of the form
	\[
		(w',\xi,r) \longmapsto
			e^{-\xi|w'|^2} e^{-\frac{\xi}{r}} \psi(w',\xi)
	\]
	such that $\psi$ is holomorphic in $w' \in \C^n$ and square-integrable with respect to $\eta$. This claim is a consequence of the definitions of $U_0$ and $U_1$ as well as Lemmas~\ref{lem:CR-U0} and \ref{lem:CR-U1}. Also note that the computations from Lemma~\ref{lem:Vlambda} show that one must restrict those functions in the variable $\xi$ from $\R$ to $\R_+$ to ensure the square-integrability. The operator $V_\lambda$ from Lemma~\ref{lem:Vlambda} has considered such restriction in its definition. Hence, $V_\lambda$ yields the same family of functions since it just introduces a renormalization on the variable $\xi$. The only condition not considered by $V_\lambda$ is the holomorphicity in the variable $w' \in \C^n$, which is however achieved by $W_\lambda$. Hence we conclude that indeed the image of $W_\lambda$ is $\big(U_1 U_0 \big)\big(\cA^2_\lambda(D_{n+1})\big)$. This discussion can be summarized in the following diagram
	\[
		\xymatrix{
		L^2(D_{n+1},v_\lambda) \ar[rrr]^{U_1 U_0} 
			&&& L^2(\HH_n \times \R_+, \nu_\lambda)
			& L^2(\C^n \times \R_+, \eta) \ar[l]_{V_\lambda} \\
		\cA^2_\lambda(D_{n+1}) \ar[u] 
			\ar[rrr]^{U_1 U_0|_{\cA^2_\lambda(D_{n+1})}}
			&&& L^2(\HH_n \times \R_+, \nu_\lambda) \ar@{=}[u]
			& \displaystyle\int_{\R_+}^\oplus \cF^2_{2\xi}(\C^n) 
				\dif \xi \ar[u] \ar[l]_{W_\lambda} 
		}
	\]
	where the extreme left and right vertical arrows are inclusions and the horizontal arrows in the second row are the restriction of the corresponding ones in the first row. We have proved above that in the diagram the operators $U_1 U_0|_{\cA^2_\lambda(D_{n+1})}$ and $W_\lambda$	have the same image. Finally, $W_\lambda$ is the restriction of an isometry and so it is itself an isometry.
\end{proof}

The next result provides a description of the weighted Bergman spaces that will allow us to diagonalize Toeplitz operator with $\HH_n$-invariant symbols. 

\begin{theorem}\label{thm:Rlambda-Ulambda}
	With the operators $U_0$, $U_1$ and $W_\lambda$ considered above, let us define the operator
	\[
		R_\lambda = W_\lambda^* U_1 U_0 : 
			L^2(D_{n+1}, v_\lambda)
			\longrightarrow
			\int_{\R_+}^\oplus \cF^2_{2\xi}(\C^n) \dif \xi.			
	\]
	Then, $R_\lambda$ is a surjective partial isometry with initial space $\cA^2_\lambda(D_{n+1})$. In other words, we have
	\[
		R_\lambda R_\lambda^* = I, \quad
		R_\lambda^* R_\lambda = B_{n+1,\lambda},
	\]
	the identity on the direct integral above and the Bergman projection from $L^2(D_{n+1},v_\lambda)$ onto $\cA^2_\lambda(D_{n+1})$, respectively. In particular, the restriction
	\[
		U_\lambda = R_\lambda|_{\cA^2_\lambda(D_{n+1})} : 
			\cA^2_\lambda(D_{n+1}) \longrightarrow
			\int_{\R_+}^\oplus \cF^2_{2\xi}(\C^n) \dif \xi,
	\]
	is a unitary map.
\end{theorem}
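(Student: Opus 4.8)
The plan is to treat the statement as an essentially formal consequence of the two structural facts established just above: that $U_1 U_0$ is a unitary map from $L^2(D_{n+1}, v_\lambda)$ onto $L^2(\HH_n \times \R_+, \nu_\lambda)$ (being a composition of the unitary change of coordinates $U_0$ with the unitary $U_1 = I \otimes \mfF \otimes I$), and that, by Lemma~\ref{lem:Wlambda-directint-Fock}, $W_\lambda$ is an isometry whose image is exactly $(U_1 U_0)\big(\cA^2_\lambda(D_{n+1})\big)$. Writing $\Phi = U_1 U_0$ for brevity, I would first record the standard relations $\Phi \Phi^* = \Phi^* \Phi = I$ coming from unitarity of $\Phi$, together with $W_\lambda^* W_\lambda = I$ and $W_\lambda W_\lambda^* = P$, where $P$ is the orthogonal projection of $L^2(\HH_n \times \R_+, \nu_\lambda)$ onto the image of $W_\lambda$. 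These last two are precisely the defining relations of an isometry and of its range projection.

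For the first identity I would compute directly
\[
	R_\lambda R_\lambda^* = W_\lambda^* \Phi \Phi^* W_\lambda = W_\lambda^* W_\lambda = I,
\]
where the middle step uses the unitarity of $\Phi$. This gives the asserted identity on the direct integral. For the second identity I would write
\[
	R_\lambda^* R_\lambda = \Phi^* W_\lambda W_\lambda^* \Phi = \Phi^* P \Phi.
\]
Since $P$ is a self-adjoint idempotent and $\Phi$ is unitary, $\Phi^* P \Phi$ is again a self-adjoint idempotent, hence an orthogonal projection. Its range is $\Phi^*\big(\operatorname{ran} P\big) = \Phi^{-1}\big((U_1 U_0)(\cA^2_\lambda(D_{n+1}))\big) = \cA^2_\lambda(D_{n+1})$, where I invoke the identification of the image of $W_\lambda$ from Lemma~\ref{lem:Wlambda-directint-Fock} and the fact that $\Phi^* = \Phi^{-1}$. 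Therefore $\Phi^* P \Phi$ is the orthogonal projection of $L^2(D_{n+1}, v_\lambda)$ onto its closed holomorphic subspace $\cA^2_\lambda(D_{n+1})$, which is by definition the Bergman projection $B_{n+1,\lambda}$.

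With both relations in hand, $R_\lambda$ is a partial isometry (its coisometry relation $R_\lambda R_\lambda^* = I$ and the fact that $R_\lambda^* R_\lambda$ is a projection suffice): its initial space equals $\operatorname{ran}(R_\lambda^* R_\lambda) = \cA^2_\lambda(D_{n+1})$, while $R_\lambda R_\lambda^* = I$ shows it maps onto the whole direct integral. Consequently the restriction $U_\lambda = R_\lambda|_{\cA^2_\lambda(D_{n+1})}$ acts isometrically on this initial space and surjects onto the target, so it is unitary. I expect no serious obstacle here, since all the genuine analytic content — the explicit group-moment change of variables, the Fourier transform on the central variable, and the computation identifying the image of $W_\lambda$ — has already been absorbed into $U_0$, $U_1$, and Lemma~\ref{lem:Wlambda-directint-Fock}. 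The only point that deserves care is the identification of $\Phi^* P \Phi$ with $B_{n+1,\lambda}$, which comes down to verifying that the range of this projection is precisely $\cA^2_\lambda(D_{n+1})$ and then appealing to the defining property of the Bergman projection as the orthogonal projection onto the holomorphic subspace.
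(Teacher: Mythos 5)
Your proposal is correct and follows essentially the same route as the paper: both arguments rest on the unitarity of $U_1U_0$ together with Lemma~\ref{lem:Wlambda-directint-Fock} (that $W_\lambda$ is an isometry with image $(U_1U_0)\big(\cA^2_\lambda(D_{n+1})\big)$), and differ only in presentation, since you verify the identities $R_\lambda R_\lambda^* = I$ and $R_\lambda^* R_\lambda = B_{n+1,\lambda}$ by direct operator algebra with the range projection $W_\lambda W_\lambda^*$, while the paper phrases the same facts in the language of initial and final spaces of partial isometries.
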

\begin{proof}
	From Lemma~\ref{lem:Wlambda-directint-Fock} we know that $W_\lambda$ is an isometry and so it is a partial isometry with initial space its domain and final space its image. Hence, $W_\lambda^*$ is a surjective partial isometry and $R_\lambda$ is a surjective partial isometry as well. Note that the latter claim follows from the fact that $U_0$ and $U_1$ are unitary maps. The surjectivity of $R_\lambda$ shows that its final space is the whole target, and this implies that $R_\lambda R_\lambda^* = I$, the identity on the direct integral from the statement.
	
	Let us now consider the following maps
	\[
		\xymatrix{
			L^2(D_{n+1},v_\lambda) \ar[rr]^{U_1 U_0} 
				& &
			L^2(\HH_n \times \R_+,\nu_\lambda) 
			\ar@/_/[rr]_{W_\lambda^*}
				& & 
			\displaystyle\int_{\R_+}^\oplus \cF^2_{2\xi}(\C^n) \dif \xi
			\ar@/_/[ll]_{W_\lambda} 
		}.
	\]
	From this diagram and Lemma~\ref{lem:Wlambda-directint-Fock} we conclude that $R_\lambda$ restricted to $\cA^2_\lambda(D_{n+1})$ is a unitary map onto the target direct integral. This implies that the initial space of $R_\lambda$ as a partial isometry is precisely $\cA^2_\lambda(D_{n+1})$, and so we conclude that $R_\lambda^* R_\lambda = B_{n+1,\lambda}$, the projection onto such initial space. In particular, $U_\lambda$ is a unitary map.
\end{proof}

We now obtain the diagonalization of Toeplitz operators with $\HH_n$-invariant symbols using the partial isometries constructed above from the group-moment coordinates.

\begin{theorem}\label{thm:Heisenberg-Toeplitz-multiplier}
	Let us consider $a \in L^\infty(D_{n+1})$ an $\HH_n$-invariant symbol and let $\widetilde{a} \in L^\infty(\R_+)$ be a function such that $a(z) = \widetilde{a}(\im(z_{n+1}) - |z'|^2)$, for almost every $z \in D_{n+1}$. Then, for the unitary map $U_\lambda$ defined in Theorem~\ref{thm:Rlambda-Ulambda} we have
	\[
		U_\lambda T^{(\lambda)}_a U_\lambda^* = 	M_{\gamma_{\widetilde{a},\lambda}},
	\]
	where $M_{\gamma_{\widetilde{a},\lambda}}$ is the multiplier operator acting on the direct integral of the family of Fock spaces $(\cF^2_{2\xi}(\C^n))_{\xi \in \R_+}$ over $\R_+$ with the Lebesgue measure and  $\gamma_{\widetilde{a},\lambda} \in L^\infty(\R_+)$ is the function given by
	\begin{equation}\label{eq:gamma-for-Heisenberg}
		\gamma_{\widetilde{a},\lambda}(\xi) 
			= \frac{(2\xi)^{\lambda+1}}{\Gamma(\lambda +1)}
				\int_0^\infty\widetilde{a}(r) e^{-2\xi r} r^\lambda \dif r
			= \frac{\xi^{\lambda+1}}{\Gamma(\lambda +1)}
				\int_0^\infty\widetilde{a}\bigg(\frac{s}{2}\bigg) e^{-\xi s} s^\lambda \dif s,	
	\end{equation}
	for almost every $\xi \in \R_+$.
\end{theorem}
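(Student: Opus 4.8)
The plan is to reduce the conjugated Toeplitz operator to a multiplier on the direct integral by pushing the multiplication operator $M_a$ through the chain of unitaries $U_0$, $U_1$ and the isometry $W_\lambda$. First I would record that, since $R_\lambda = W_\lambda^* U_1 U_0$ satisfies $R_\lambda R_\lambda^* = I$ and $R_\lambda^* R_\lambda = B_{n+1,\lambda}$ by Theorem~\ref{thm:Rlambda-Ulambda}, the inverse of the unitary $U_\lambda$ is exactly $R_\lambda^*$ (its range lies in $\cA^2_\lambda(D_{n+1})$ because $R_\lambda^* = B_{n+1,\lambda} R_\lambda^*$) and moreover $R_\lambda B_{n+1,\lambda} = R_\lambda$. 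Combining this with $T^{(\lambda)}_a = B_{n+1,\lambda} M_a|_{\cA^2_\lambda(D_{n+1})}$ and the fact that $R_\lambda^*\varphi$ is holomorphic, hence a legitimate input for $M_a$, gives the identity
\[
	U_\lambda T^{(\lambda)}_a U_\lambda^* = R_\lambda M_a R_\lambda^* = W_\lambda^* U_1 U_0\, M_a\, U_0^{-1} U_1^{-1} W_\lambda,
\]
so the whole problem becomes the computation of the right-hand side.

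Next I would track what $M_a$ becomes after each stage. Because $a$ is $\HH_n$-invariant with $a(z) = \widetilde{a}(\im(z_{n+1}) - |z'|^2)$, and the last coordinate of $\tau$ is $H(z) = 1/(\im(z_{n+1}) - |z'|^2) = r$, the conjugate $U_0 M_a U_0^{-1}$ is multiplication by the function $(w',t,r)\mapsto \widetilde{a}(1/r)$ on $L^2(\HH_n\times\R_+,\nu_\lambda)$. Since this multiplier depends only on $r$, it commutes with $U_1 = I\otimes\mfF\otimes I$, which acts solely in the variable $t$; hence $U_1 U_0 M_a U_0^{-1} U_1^{-1}$ is again multiplication by $\widetilde{a}(1/r)$, now in the coordinates $(w',\xi,r)$. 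The task therefore reduces to evaluating $W_\lambda^* M_{\widetilde{a}(1/r)} W_\lambda$.

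The computational heart of the proof is this last sandwich, which I would carry out using the explicit formulas for $V_\lambda$ and $V_\lambda^*$ from Lemma~\ref{lem:Vlambda}, recalling via Lemma~\ref{lem:Wlambda-directint-Fock} that $W_\lambda$ is the restriction of $V_\lambda$. Applying $W_\lambda$ to a section $\psi$ produces the factor $e^{-\xi|w'|^2}e^{-\xi/r}\psi(w',\xi)$ times a $\xi$-dependent normalizing constant; multiplying by $\widetilde{a}(1/r)$ and then applying $V_\lambda^*$, the Gaussian factors $e^{\pm\xi|w'|^2}$ cancel and the $w'$-dependence factors out, leaving $\psi(w',\xi)$ multiplied by the scalar
\[
	\frac{(2\xi)^{\lambda+1}}{\Gamma(\lambda+1)}\int_0^\infty \frac{\widetilde{a}(1/r)\,e^{-2\xi/r}}{r^{\lambda+2}}\,\dif r,
\]
where the two prefactors telescope to $(2\xi)^{\lambda+1}/\Gamma(\lambda+1)$ after cancelling the $\Gamma(\lambda+n+2)$, the powers of $\pi$, and the powers of $(2\xi)$, while the two copies of $e^{-\xi/r}$ combine into $e^{-2\xi/r}$. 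The substitution $s = 1/r$ then turns this integral into $\int_0^\infty \widetilde{a}(s) e^{-2\xi s} s^\lambda\,\dif s$, yielding exactly $\gamma_{\widetilde{a},\lambda}(\xi)$ as in \eqref{eq:gamma-for-Heisenberg}; the second displayed form there follows from the further substitution $s = 2r$. Since this scalar acts fiberwise by multiplication on each $\cF^2_{2\xi}(\C^n)$, the resulting operator is precisely the multiplier $M_{\gamma_{\widetilde{a},\lambda}}$ on the direct integral.

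I expect the main obstacle to be bookkeeping rather than anything conceptual: correctly matching the normalizing constants of $V_\lambda$ and $V_\lambda^*$ so that they collapse to $(2\xi)^{\lambda+1}/\Gamma(\lambda+1)$, and verifying that $\gamma_{\widetilde{a},\lambda}\in L^\infty(\R_+)$ so that $M_{\gamma_{\widetilde{a},\lambda}}$ is genuinely bounded. The latter follows from the estimate $|\gamma_{\widetilde{a},\lambda}(\xi)|\le \|\widetilde{a}\|_\infty$, obtained by replacing $\widetilde{a}$ with its supremum and using $\int_0^\infty e^{-2\xi r} r^\lambda\,\dif r = \Gamma(\lambda+1)/(2\xi)^{\lambda+1}$. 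A secondary technical point is the use of Fubini's theorem to pull the $r$-integration through the adjoint, which is justified by the boundedness of $\widetilde{a}$ together with the integrability already established in Lemma~\ref{lem:Vlambda}.
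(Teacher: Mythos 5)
Your proposal is correct and follows essentially the same route as the paper's own proof: reduce to $R_\lambda M_a R_\lambda^* = W_\lambda^* U_1 U_0 M_a U_0^* U_1^* W_\lambda$ via the partial-isometry identities of Theorem~\ref{thm:Rlambda-Ulambda}, use $\HH_n$-invariance to see that $U_0 M_a U_0^*$ is multiplication by $\widetilde{a}(1/r)$ (which commutes with $U_1$), and then evaluate $W_\lambda^* M_{\widetilde{a}} W_\lambda$ explicitly from Lemma~\ref{lem:Vlambda}, with the constants telescoping to $(2\xi)^{\lambda+1}/\Gamma(\lambda+1)$ and the substitution $s = 1/r$ giving \eqref{eq:gamma-for-Heisenberg}. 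The added remarks on $\|\gamma_{\widetilde{a},\lambda}\|_\infty \le \|\widetilde{a}\|_\infty$ and Fubini are sound but not needed beyond what the paper already records.
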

\begin{proof}
	From the properties established in Theorem~\ref{thm:Rlambda-Ulambda} we obtain
	\begin{align*}
		U_\lambda T^{(\lambda)}_a U_\lambda^* 
			&= R_\lambda B_{n+1,\lambda} M_a B_{n+1,\lambda} R_\lambda^* 
			= R_\lambda R_\lambda^* R_\lambda
				M_a R_\lambda^* R_\lambda R_\lambda^* \\
			&= R_\lambda M_a R_\lambda^* 
			= W_\lambda^* U_1 U_0 M_a U_0^* U_1^* W_\lambda.
	\end{align*}
	We observe that the definition of $U_0$ yields
	\[
		U_0 M_a U_0^* = M_{a \circ \kappa},
	\]
	acting on $L^2(\HH_n \times \R_+, \nu_\lambda)$ and we can compute
	\[
		a \circ \kappa(w',t,r) = a((w',t)\cdot \sigma(r))
			= a(\sigma(r)) 
			= a(0',ir^{-1})
			= \widetilde{a}(r^{-1}),
	\]
	for almost every $(w',t,r) \in \HH_n \times \R_+$. We have used the $\HH_n$-invariance of $a$ in the second identity. Note that the latter function does not depend on the variables $(w',t) \in \HH_n$ and so the multiplier operator $M_{a \circ \kappa} = M_{\widetilde{a}}$ commutes with $U_1 = I \otimes \mfF \otimes I$. The reason is that the Fourier transform is applied in the variable $t$ to obtain a new variable that we have denoted by $\xi$. Hence, we arrive to
	\[
		U_\lambda T^{(\lambda)}_a U_\lambda^* 
		= W_\lambda^* M_{\widetilde{a}} W_\lambda,
	\]
	that we now proceed to compute using Lemmas~\ref{lem:Vlambda} and \ref{lem:Wlambda-directint-Fock}. For any given $\psi$ in the direct integral of the family $(\cF^2_{2\xi}(\C^n))_{\xi \in \R_+}$ over $\R_+$ with the Lebesgue measure we have for every $(w',\xi) \in \C^n \times \R_+$
	\begin{align*}
		(W_\lambda^* &M_{\widetilde{a}} W_\lambda)\psi(w',\xi) = \\
			=&\;
				\frac{\sqrt{(2\xi)^{\lambda-n+1}\Gamma(\lambda+n+2)}}%
				{2\sqrt{\pi}\Gamma(\lambda + 1)}
					e^{\xi|w'|^2} \times \\
			&\times		\int_{\R_+} 
				\widetilde{a}(r^{-1})
				2\sqrt{\frac{\pi(2\xi)^{\lambda+n+1}}%
					{\Gamma(\lambda + n + 2)}}
						e^{-\xi|w'|^2} e^{-\frac{\xi}{r}}
						\chi_{\R_+}(\xi) \psi(w',\xi)
						\frac{e^{-\frac{\xi}{r}} \dif r}{r^{\lambda+2}} \\
			=&\;
				\frac{(2\xi)^{\lambda + 1}}{\Gamma(\lambda+1)}
				\bigg(
				\int_{\R_+}
					\frac{\widetilde{a}(r^{-1}) e^{-\frac{2\xi}{r}}}%
							{r^{\lambda+2}} \dif r
				\bigg) \psi(w',\xi),
	\end{align*}
	and the result now follows from some simple changes of variable.
\end{proof}

From now on, if $\cS \subset L^\infty(D_{n+1})$ is a family of symbols, then we will denote by $\cT^{(\lambda)}(\cS)$ the $C^*$-algebra generated by Toeplitz operators, acting on $\cA^2_\lambda(D_{n+1})$ where $\lambda > -1$, with symbols belonging to $\cS$. Let us now apply  Theorem~\ref{thm:Heisenberg-Toeplitz-multiplier} to describe the structure of the $C^*$-algebra generated by Toeplitz operators with $\HH_n$-invariant symbols.

\begin{theorem}\label{thm:Heisenberg-Toeplitz-C*-commutative}
	The $C^*$-algebra $\cT^{(\lambda)}\big(L^\infty(D_{n+1})^{\HH_n}\big)$ is commutative. Furthermore, there exists a unitary map 
	\[
		U_\lambda : \cA^2_\lambda(D_{n+1}) \longrightarrow 
			\int_{\R_+}^\oplus \cF^2_{2\xi}(\C^n) \dif \xi,
	\]
	that satisfies
	\[
		U_\lambda 
			\{ T^{(\lambda)}_a : a \in L^\infty(D_{n+1})^{\HH_n} \}
		U_\lambda^* 
		= \{ M_{\gamma_{\widetilde{a},\lambda}} : \widetilde{a} 
			\in L^\infty(\R_+) \},
	\]
	where $\gamma_{\widetilde{a},\lambda}$ is defined by equation~\eqref{eq:gamma-for-Heisenberg}. Hence, $\cT^{(\lambda)}\big(L^\infty(D_{n+1})^{\HH_n}\big)$ is isomorphic to the $C^*$-subalgebra of $L^\infty(\R_+)$ generated by the functions $\gamma_{\widetilde{a},\lambda}$ given by equation~\eqref{eq:gamma-for-Heisenberg}, where $\widetilde{a} \in L^\infty(\R_+)$. In particular, the isomorphism class of $\cT^{(\lambda)}\big(L^\infty(D_{n+1})^{\HH_n}\big)$ does not depend on the dimension of the Siegel domain~$D_{n+1}$.
\end{theorem}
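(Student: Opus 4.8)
The plan is to build everything on Theorem~\ref{thm:Heisenberg-Toeplitz-multiplier}, which already supplies the simultaneous diagonalization $U_\lambda T^{(\lambda)}_a U_\lambda^* = M_{\gamma_{\widetilde{a},\lambda}}$ for every $\HH_n$-invariant symbol. First I would verify the set equality asserted in the statement. The inclusion $\subseteq$ is immediate: any $a \in L^\infty(D_{n+1})^{\HH_n}$ determines by Proposition~\ref{prop:Hn-orbits-moment-map-center} a function $\widetilde{a} \in L^\infty(\R_+)$ with $a(z) = \widetilde{a}(\im(z_{n+1}) - |z'|^2)$, and Theorem~\ref{thm:Heisenberg-Toeplitz-multiplier} then gives $U_\lambda T^{(\lambda)}_a U_\lambda^* = M_{\gamma_{\widetilde{a},\lambda}}$. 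For the reverse inclusion $\supseteq$, given an arbitrary $\widetilde{a} \in L^\infty(\R_+)$ I would set $a(z) = \widetilde{a}(\im(z_{n+1}) - |z'|^2)$, which is essentially bounded and $\HH_n$-invariant (again by Proposition~\ref{prop:Hn-orbits-moment-map-center}), so that $M_{\gamma_{\widetilde{a},\lambda}}$ arises from a genuine $\HH_n$-invariant symbol. This establishes the displayed equality of sets of operators.

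Next I would deduce commutativity. Conjugation by the unitary $U_\lambda$ is a $*$-isomorphism of $C^*$-algebras, so it suffices to observe that the multiplier operators appearing on the right commute: since each $M_{\gamma_{\widetilde{a},\lambda}}$ acts fiberwise by the scalar $\gamma_{\widetilde{a},\lambda}(\xi)$ on the direct integral, we have $M_{\gamma_1} M_{\gamma_2} = M_{\gamma_1 \gamma_2} = M_{\gamma_2} M_{\gamma_1}$ for any two such symbols. Hence the generated algebra of multipliers is commutative, and pulling this back through $U_\lambda$ shows that $\cT^{(\lambda)}(L^\infty(D_{n+1})^{\HH_n})$ is commutative.

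For the isomorphism, since $\mathrm{Ad}_{U_\lambda}$ is a $*$-isomorphism it carries the $C^*$-algebra generated by $\{T^{(\lambda)}_a : a \in L^\infty(D_{n+1})^{\HH_n}\}$ onto the $C^*$-algebra generated by $\{M_{\gamma_{\widetilde{a},\lambda}} : \widetilde{a} \in L^\infty(\R_+)\}$. It then remains to identify the latter with the $C^*$-subalgebra of $L^\infty(\R_+)$ generated by the functions $\gamma_{\widetilde{a},\lambda}$. The map $\gamma \mapsto M_\gamma$ from bounded measurable functions on $\R_+$ to multipliers on the direct integral is a $*$-homomorphism, and I would check that it is isometric, i.e.\ $\|M_\gamma\| = \|\gamma\|_\infty$. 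The only point requiring care here --- and the closest thing to an obstacle in an otherwise formal argument --- is that the norm of a decomposable multiplier equals the essential supremum of the scalar function only when the fibers are non-degenerate; this holds because each Fock space $\cF^2_{2\xi}(\C^n)$ is nonzero for every $\xi \in \R_+$. Granting this, the $*$-isomorphism restricts to the desired identification of generated $C^*$-algebras.

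Finally, the independence of the dimension is now immediate: the formula \eqref{eq:gamma-for-Heisenberg} for $\gamma_{\widetilde{a},\lambda}$ makes no reference to $n$, so the family $\{\gamma_{\widetilde{a},\lambda} : \widetilde{a} \in L^\infty(\R_+)\}$, and hence the $C^*$-subalgebra of $L^\infty(\R_+)$ it generates, depends only on $\lambda$. Since this subalgebra serves as a common isomorphic model for all $n \geq 1$, the isomorphism class of $\cT^{(\lambda)}(L^\infty(D_{n+1})^{\HH_n})$ is independent of $n$.
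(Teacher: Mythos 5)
Your proof is correct and follows essentially the same route as the paper: both rest entirely on Theorem~\ref{thm:Heisenberg-Toeplitz-multiplier}, conjugate by the unitary $U_\lambda$, and identify the resulting multiplier operators with the $C^*$-subalgebra of $L^\infty(\R_+)$ generated by the functions $\gamma_{\widetilde{a},\lambda}$. The only difference is cosmetic: where the paper invokes the canonical isomorphism (citing \cite{KRvolII}) between the von Neumann algebra of diagonalizable operators on the direct integral and $L^\infty(\R_+)$, you verify the required isometry of $\gamma \mapsto M_\gamma$ directly via the non-vanishing of the Fock-space fibers, and you also spell out the reverse inclusion in the displayed set equality, which the paper leaves implicit.
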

\begin{proof}
	The relationship between the Toeplitz operators operators $T^{(\lambda)}_a$, with $a$ an $\HH_n$-invariant symbol, and the multiplier operators $M_{\gamma_{\widetilde{a},\lambda}}$, with $\gamma_{\widetilde{a},\lambda}$ as in equation~\eqref{eq:gamma-for-Heisenberg}, has already been established in Theorem~\ref{thm:Heisenberg-Toeplitz-multiplier} for the unitary map $U_\lambda$ from its statement. From this it follows immediately that $\cT^{(\lambda)}\big(L^\infty(D_{n+1})^{\HH_n}\big)$ is commutative.
	
	For the same map $U_\lambda$, this also proves that $\cT^{(\lambda)}\big(L^\infty(D_{n+1})^{\HH_n}\big)$ corresponds to a $C^*$-subalgebra of the von Neumann algebra of diagonalizable operators for the direct integral decomposition in the statement. Such von Neumann algebra is canonically isomorphic to $L^\infty(\R_+)$ through the map $\gamma \mapsto M_\gamma$ since the direct integral is taken over $\R_+$ (see \cite[Remark~14.1.7]{KRvolII}). Within this setup, Theorem~\ref{thm:Heisenberg-Toeplitz-multiplier} implies that $\cT^{(\lambda)}\big(L^\infty(D_{n+1})^{\HH_n}\big)$ is isomorphic to the $C^*$-subalgebra of $L^\infty(\R_+)$ generated by the functions $\gamma_{\widetilde{a},\lambda}$ is defined by equation~\eqref{eq:gamma-for-Heisenberg}. This completes the proof of the theorem.
\end{proof}

\begin{remark}\label{rmk:Heisenberg-Toeplitz-C*-commutative}
	We recall that our results are stated for $D_{n+1}$ and that we have assumed $n \geq 1$, so that the lowest dimensional Siegel domain considered up to this point is $D_2$ with the action of the ($3$-dimensional) Heisenberg group $\HH_1$. Hence, we still have to consider $D_1 = \{z \in \C : \im(z) > 0\}$, the upper half-plane in the complex plane, with the action of $\R$ by the horizontal translations $z \mapsto z+t$, defined for $t \in \R$. This $1$-dimensional case has already been studied in \cite{GKVParabolic,HMV2013Vertical,HHM2014VerticalWeighted}, listed in chronological order. Theorem~\ref{thm:Heisenberg-Toeplitz-multiplier}, and a straightforward comparison with these references (see for example \cite[Theorem~2.5]{GKVParabolic}), shows that the exact same family of functions $\gamma_{\widetilde{a},\lambda}$ given by equation~\eqref{eq:gamma-for-Heisenberg} is obtained for all dimensions, including the $1$-dimensional case of the $\R$-action on $D_1$.
	
	The previous observations lead to a number of important conclusions. Firstly, from the viewpoint of $C^*$-algebras generated by Toeplitz operators, the most natural generalization of the $\R$-action on the upper half-plane $D_1$ is the action of the Heisenberg group $\HH_n$ on $D_{n+1}$. The reason is that in all such cases, and by considering Toeplitz operators with invariant symbols, we generate the same $C^*$-algebra up to isomorphism. 
	
	With this respect, a similar situation was observed in \cite{BHVRadial2014,GMVRadial} for the case of the $C^*$-algebra (weighted case and weightless case, respectively) generated by radial Toeplitz operators acting on the $n$-dimensional ball. It was proved on those references that such $C^*$-algebra is independent of $n$. However, in our case we have proved that not only is the $C^*$-algebra $\cT^{(\lambda)}\big(L^\infty(D_{n+1})^{\HH_n}\big)$ independent of $n$, up to isomorphism, but we have also proved that their natural generators, the Toeplitz operators which correspond to the multiplier operators given by \eqref{eq:gamma-for-Heisenberg}, do not actually depend on $n$ either. To the best of our knowledge, such result has not been achieved for radial Toeplitz operators on the unit ball.
	
	Secondly, thanks to the functions obtained in Theorem~\ref{thm:Heisenberg-Toeplitz-multiplier}, the currently known structure of the $C^*$-algebras involved for the $1$-dimensional case (see \cite{HHM2014VerticalWeighted}) allows us to obtain results for arbitrary dimension. We are able to provide below a precise description of the $C^*$-algebra $\cT^{(\lambda)}\big(L^\infty(D_{n+1})^{\HH_n}\big)$ for every $n \geq 1$.
\end{remark}

We recall the notion of continuity that allow us to describe the $C^*$-algebras obtained from $\HH_n$-invariant symbols.

\begin{definition}\label{def:VSO}
	A function $f : \R_+ \rightarrow \C$ is called very slowly oscillating if it is uniformly continuous for the logarithmic metric $d(x,y) = |\log(x) - \log(y)|$ on its domain $\R_+$. The set of all very slowly oscillating functions is denoted by $\VSO(\R_+)$.
\end{definition}

It is a well known fact that $\VSO(\R_+)$ is $C^*$-subalgebra of $C_b(\R_+)$. It turns out that the $C^*$-algebra $\VSO(\R_+)$ yields the description of the $C^*$-algebras generated by Toeplitz operators with $\HH_n$-invariant symbols, independently of $n$ or the weight considered.

\begin{theorem}\label{thm:Toeplitz-Hn-VSO}
	For every $n \geq 1$ and for every weight $\lambda > -1$, the $C^*$-algebra $\cT^{(\lambda)}\big(L^\infty(D_{n+1})^{\HH_n}\big)$ is isomorphic to $\VSO(\R_+)$.
\end{theorem}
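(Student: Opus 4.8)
The plan is to reduce the statement to the already-understood one-dimensional case on the upper half-plane by identifying the generating family of diagonalizing functions. By Theorem~\ref{thm:Heisenberg-Toeplitz-C*-commutative}, the $C^*$-algebra $\cT^{(\lambda)}\big(L^\infty(D_{n+1})^{\HH_n}\big)$ is isomorphic, via the unitary $U_\lambda$ together with the canonical identification of the diagonalizable operators on $\int_{\R_+}^\oplus \cF^2_{2\xi}(\C^n)\dif\xi$ with $L^\infty(\R_+)$, to the $C^*$-subalgebra $\mathcal{B}_\lambda \subset L^\infty(\R_+)$ generated by the functions $\gamma_{\widetilde{a},\lambda}$ from \eqref{eq:gamma-for-Heisenberg} as $\widetilde{a}$ ranges over $L^\infty(\R_+)$. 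Since this isomorphism has already absorbed the dependence on $n$, the problem becomes purely one about the single-variable integral transform $\widetilde{a} \mapsto \gamma_{\widetilde{a},\lambda}$ and the subalgebra of $L^\infty(\R_+)$ that it generates.

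First I would observe that the formula
\[
	\gamma_{\widetilde{a},\lambda}(\xi)
		= \frac{(2\xi)^{\lambda+1}}{\Gamma(\lambda+1)}
			\int_0^\infty \widetilde{a}(r)\, e^{-2\xi r} r^\lambda \dif r
\]
is, up to the harmless rescaling of the spectral variable, exactly the Laplace-type transform that governs the diagonalization of Toeplitz operators with vertical symbols on the weighted Bergman space over the upper half-plane treated in \cite{HMV2013Vertical,HHM2014VerticalWeighted}. A direct comparison of the integral kernels shows that, as $\widetilde{a}$ ranges over all of $L^\infty(\R_+)$, the image of the map $\widetilde{a}\mapsto\gamma_{\widetilde{a},\lambda}$ coincides with the corresponding family of functions in those references; consequently the two $C^*$-subalgebras of $L^\infty(\R_+)$ that these families generate are literally the same subalgebra $\mathcal{B}_\lambda$.

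I would then invoke the main structural result of \cite{HHM2014VerticalWeighted}, which identifies precisely this subalgebra with $\VSO(\R_+)$: each $\gamma_{\widetilde{a},\lambda}$ is bounded and uniformly continuous for the logarithmic metric, giving $\mathcal{B}_\lambda \subseteq \VSO(\R_+)$, while the family is rich enough that the generated algebra exhausts $\VSO(\R_+)$. Combining this identification $\mathcal{B}_\lambda = \VSO(\R_+)$ with the isomorphism furnished by Theorem~\ref{thm:Heisenberg-Toeplitz-C*-commutative} yields $\cT^{(\lambda)}\big(L^\infty(D_{n+1})^{\HH_n}\big) \cong \VSO(\R_+)$ for every $n \geq 1$ and every $\lambda > -1$, as claimed.

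The hard part will be bookkeeping rather than conceptual: one must match conventions between the two settings with care—the normalizing constants, the powers of $2$ in the exponent and prefactor, and the exact parametrization by the weight $\lambda$—so that the assertion ``same family of functions'' is rigorous at the level of generated $C^*$-subalgebras and not a mere superficial resemblance of formulas. The genuinely difficult analysis, namely that such transforms generate exactly $\VSO(\R_+)$ and in particular that the inclusion $\mathcal{B}_\lambda \supseteq \VSO(\R_+)$ holds, is imported wholesale from \cite{HHM2014VerticalWeighted}; our only remaining obligation is to certify that the one-variable transform appearing here is precisely the one analyzed there.
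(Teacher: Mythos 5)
Your proposal is correct and takes essentially the same route as the paper: reduce via Theorem~\ref{thm:Heisenberg-Toeplitz-multiplier} (equivalently Theorem~\ref{thm:Heisenberg-Toeplitz-C*-commutative}) to the $C^*$-subalgebra of $L^\infty(\R_+)$ generated by the functions $\gamma_{\widetilde{a},\lambda}$ of \eqref{eq:gamma-for-Heisenberg}, identify this family with the one arising from vertical symbols on the upper half-plane, and import from \cite[Theorem~2]{HHM2014VerticalWeighted} that this family is dense in $\VSO(\R_+)$. The paper's proof is precisely this two-step argument, so your only added content is the (correct) emphasis on checking that the normalizations match across the two settings.
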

\begin{proof}
	First, Theorem~\ref{thm:Heisenberg-Toeplitz-multiplier} implies that $\cT^{(\lambda)}\big(L^\infty(D_{n+1})^{\HH_n}\big)$ is isomorphic to the $C^*$-subalgebra of $L^\infty(\R_+)$ generated by the family of functions
	\[
		\mathbb{F}_\lambda =
		\{ \gamma_{\widetilde{a},\lambda} : 
				\widetilde{a} \in L^\infty(\R_+) \},
	\]
	where $\gamma_{\widetilde{a},\lambda}$ is given by equation~\eqref{eq:gamma-for-Heisenberg}. Secondly, \cite[Theorem~2]{HHM2014VerticalWeighted} proves that the same set $\mathbb{F}_\lambda$ of functions is dense in $\VSO(\R_+)$. The result follows from these two claims.
\end{proof}

\begin{remark}\label{rmk:Toeplitz-Hn-VSO}
	Building from the observations of Remark~\ref{rmk:Heisenberg-Toeplitz-C*-commutative}, we note that the proof of Theorem~\ref{thm:Toeplitz-Hn-VSO} implies the following interesting, almost canonical, properties satisfied by the family $\mathbb{F}_\lambda$ of functions $\gamma_{\widetilde{a},\lambda}$ defined by \eqref{eq:gamma-for-Heisenberg}.
	\begin{enumerate}
		\item $\mathbb{F}_\lambda$ generates $\VSO(\R_+)$ for every $\lambda > -1$. This is due to \cite[Theorem~2]{HHM2014VerticalWeighted}.
		\item $\mathbb{F}_\lambda$ correspond to the natural generators of $\cT^{(\lambda)}\big(L^\infty(D_{n+1})^{\HH_n}\big)$: the Toeplitz operators acting on $\cA^2_\lambda(D_{n+1})$ with $\HH_n$-invariant symbols. This follows from Theorem~\ref{thm:Heisenberg-Toeplitz-multiplier}.
	\end{enumerate}
	In particular, not only has Theorem~\ref{thm:Toeplitz-Hn-VSO} established that $\cT^{(\lambda)}\big(L^\infty(D_{n+1})^{\HH_n}\big)$ and $\VSO(\R_+)$ are isomorphic for all $n \geq 1$ and $\lambda > -1$. It has also proved that they have corresponding generating sets, one of which is naturally associated to the former.
\end{remark}

\subsection{A proof using the nilpotent MASG}
\label{subsec:nilpotent-MASG}
Let us consider the Abelian subgroup $\R^{n+1} \subset \HH_n$, which corresponds to the so-called nilpotent MASG (see Remark~\ref{rmk:Hn-NilpotentAction}). Then, we have the following obvious result.

\begin{corollary}\label{cor:Heisenberg-inv-nilpMASG-inv}
	For every $n \geq 1$ we have $L^\infty(D_{n+1})^{\HH_n} \subset L^\infty(D_{n+1})^{\R^{n+1}}$. In other words, every $\HH_n$-invariant symbol is symbol invariant under the nilpotent MASG acting on $D_{n+1}$.
\end{corollary}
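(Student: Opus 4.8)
The plan is to observe that this inclusion is nothing more than a restriction-of-invariance statement, which is why it is an immediate corollary. First I would recall from Remark~\ref{rmk:Hn-NilpotentAction} that $\R^{n+1} = \R^n \times \R$ sits inside $\HH_n = \C^n \times \R$ as a closed Lie subgroup, where $\R^n$ is embedded in $\C^n$ as its real points. Accordingly, the $\R^{n+1}$-action on $D_{n+1}$ is by construction the restriction of the $\HH_n$-action furnished by Proposition~\ref{prop:Hn-action}: an element $(x',t) \in \R^{n+1}$ acts through the very same biholomorphism of $D_{n+1}$ that it defines as an element of $\HH_n$.

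The key step is then immediate. If $a \in L^\infty(D_{n+1})^{\HH_n}$, then by definition $a(h \cdot z) = a(z)$ holds for almost every $z \in D_{n+1}$ and every $h \in \HH_n$. Since every element of $\R^{n+1}$ is in particular an element of $\HH_n$, this identity persists for every $h \in \R^{n+1}$, which is exactly the assertion that $a \in L^\infty(D_{n+1})^{\R^{n+1}}$. Taking the union over all such $a$ yields the claimed inclusion $L^\infty(D_{n+1})^{\HH_n} \subset L^\infty(D_{n+1})^{\R^{n+1}}$.

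There is essentially no obstacle to overcome, consonant with the statement being labeled obvious. The only point deserving a word of care is the measure-theoretic phrasing of invariance, since the defining identity holds only almost everywhere; but because the elements of $\R^{n+1}$ act through the same maps as the corresponding elements of $\HH_n$, the relevant null set may be chosen to be the one already furnished by the $\HH_n$-invariance of $a$, so no genuine difficulty arises.
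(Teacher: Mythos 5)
Your proof is correct and is exactly the argument the paper has in mind: since $\R^{n+1}$ is a subgroup of $\HH_n$ acting by the restriction of the same action (Remark~\ref{rmk:Hn-NilpotentAction}), invariance under $\HH_n$ trivially restricts to invariance under $\R^{n+1}$, which is why the paper states the corollary as obvious and gives no proof. Your remark about the almost-everywhere null set is a reasonable extra precaution but introduces no new content.
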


It was proved in \cite{QSMomentMapJFA,QVUnitBall1,QVUnitBall2} the mutual commutativity of Toeplitz operators with nilpotent symbols, i.e.~with $\R^{n+1}$-invariant symbols. Furthermore, these previous works ensure the existence of unitary maps with respect to which the corresponding Toeplitz operators transform into multiplier operators. We now use these previous results to obtain a second proof of the diagonalizing formulas for Toeplitz operators with $\HH_n$-invariant symbols. 

Let us fix a symbol $a \in L^\infty(D_{n+1})^{\HH_n}$ and consider a function $\widetilde{a}$ such that $a(z) = \widetilde{a}(\im(z_{n+1}) - |z'|^2)$, for almost every $z \in D_{n+1}$. Let us now define the function $f : \R^n \times \R_+ \rightarrow \C$ by the expression
\[
	f(u',t) = \widetilde{a}\bigg(\frac{1}{2t}\bigg).
\]
In particular, we also have
\[
	\widetilde{a}(t) = f\bigg(u',\frac{1}{2t}\bigg)
\]
for almost every $u' \in \R^n$ and $t \in \R_+$. From this we conclude that
\begin{align*}
	a(z) &= \widetilde{a}(\im(z_{n+1}) - |z'|^2) \\
	&= f\bigg(-\frac{2\im(z')}{\im(z_{n+1}) - |z'|^2},
		\frac{1}{2(\im(z_{n+1}) - |z'|^2)}
		\bigg),
\end{align*}
for almost every $z \in D_{n+1}$. Note that the set of the first $n$ variables of $f$, which belongs to $\R^n$, can be chosen arbitrarily since this function does not depend on it. Once this expression has been established we can apply \cite[Theorem~7.8]{QSMomentMapJFA} to conclude the existence of a unitary map $U : \cA^2_\lambda(D_{n+1}) \rightarrow L^2(\R^n) \otimes L^2(\R_+)$, independent of the symbol under consideration, such that we have $U T^{(\lambda)}_a U^* = M_{\widehat{\gamma}_{f,\lambda}}$ where the function $\widehat{\gamma}_{f,\lambda}$ is given~by
\begin{align*}
	\widehat{\gamma}_{f,\lambda}&(y',\xi) = \\
	&=\frac{\xi^{\lambda + \frac{n}{2} + 1}}%
		{2^n \pi^{\frac{n}{2}} \Gamma(\lambda + 1)}
		\int_{\R^n \times \R_+}
		\frac{f(u',t) 
			e^{-\frac{\xi}{t} 
				-\sVert[1] \frac{\sqrt{\xi} u'}{2t} 
				- y' \sVert[1]^2}}%
		{t^{\lambda + n + 2}} \dif u' \dif t.
\end{align*}
Next, we replace our choice of $f$ in terms of $\widetilde{a}$ to obtain after some computations the following
\begin{align*}
	\widehat{\gamma}_{f,\lambda}&(y',\xi) = \\
		&=\frac{\xi^{\lambda + \frac{n}{2} + 1}}%
		{2^n \pi^{\frac{n}{2}} \Gamma(\lambda + 1)}
		\int_{\R^n \times \R_+}
		\frac{\widetilde{a}(1/(2t)) 
			e^{-\frac{\xi}{t} 
				-\sVert[1] \frac{\sqrt{\xi} u'}{2t} 
				- y' \sVert[1]^2}}%
		{t^{\lambda + n + 2}} \dif u' \dif t \\
		&=\frac{\xi^{\lambda + \frac{n}{2} + 1}}%
		{2^n \pi^{\frac{n}{2}} \Gamma(\lambda + 1)}
		\int_0^\infty\frac{\widetilde{a}(1/(2t))
			e^{-\frac{\xi}{t}}}%
		{t^{\lambda + n + 2}}
		\bigg(
		\int_{\R^n} e^{-\sVert[1] 
			\frac{\sqrt{\xi} u'}{2t} 
			- y' \sVert[1]^2} \dif u'
		\bigg) \dif t \\
		&=\frac{\xi^{\lambda + \frac{n}{2} + 1}}%
		{2^n \pi^{\frac{n}{2}} \Gamma(\lambda + 1)}
		\int_0^\infty\frac{\widetilde{a}(1/(2t))
			e^{-\frac{\xi}{t}}}%
		{t^{\lambda + n + 2}}
		\bigg(
		\int_{\R^n} e^{-\sVert[1] 
			\frac{\sqrt{\xi} u'}{2t} 
			\sVert[1]^2} \dif u'
		\bigg) \dif t \\
		&=\frac{\xi^{\lambda + \frac{n}{2} + 1}}%
		{2^n \pi^{\frac{n}{2}} \Gamma(\lambda + 1)}
		\int_0^\infty\frac{\widetilde{a}(1/(2t))
			e^{-\frac{\xi}{t}}}%
		{t^{\lambda + n + 2}}
		\bigg(
		\int_{\R^n} e^{-\frac{\xi}{4t^2} |u'|^2} 
		\dif u'
		\bigg) \dif t \\
		&=\frac{\xi^{\lambda + \frac{n}{2} + 1}}%
		{2^n \pi^{\frac{n}{2}} \Gamma(\lambda + 1)}
		\int_0^\infty\frac{\widetilde{a}(1/(2t))
			e^{-\frac{\xi}{t}}}%
		{t^{\lambda + n + 2}}
		\bigg(
		\frac{\pi^\frac{n}{2} (2t)^n}{\xi^\frac{n}{2}}
		\bigg) \dif t \\
		&=\frac{\xi^{\lambda + 1}}%
		{\Gamma(\lambda + 1)}
		\int_0^\infty\frac{\widetilde{a}(1/(2t))
			e^{-\frac{\xi}{t}}}%
		{t^{\lambda + 2}}
		\dif t
		=\frac{\xi^{\lambda + 1}}%
		{\Gamma(\lambda + 1)}
		\int_0^\infty \widetilde{a}\bigg(\frac{s}{2}\bigg) e^{-\xi s}
		s^{\lambda}
		\dif s.
\end{align*}
We note that this function is independent of $y' \in \R^n$, and so once we rename it to $\gamma_{\widetilde{a},\lambda}$ to emphasize its dependence on $\widetilde{a}$ we obtain the same expression given by equation~\eqref{eq:gamma-for-Heisenberg}. 

\begin{remark}\label{rmk:Heisenberg-vs-nilpotentMASG}
	Although the diagonalizing formulas from Theorem~\ref{thm:Heisenberg-Toeplitz-multiplier} can be obtained from the corresponding ones for the nilpotent MASG acting on $D_{n+1}$, there are a number of advantages using our techniques.
	\begin{itemize}
		\item Our computations using the Heisenberg group are more direct and short, even after omitting the computations needed to obtain the result for the nilpotent MASG.
		\item To translate the Cauchy-Riemann equations on $D_{n+1}$ to corresponding ones after changes of coordinates using the nilpotent MASG as found in \cite{QSMomentMapJFA,QVUnitBall1,QVUnitBall2} required to apply Fourier transform on $n+1$ (real) variables. It is remarkable that our method using the Heisenberg group required to apply Fourier transform only over the (last) real variable of $\HH_n$: in some sense, the rest of Cauchy-Riemann equations simply went through since we obtained Fock spaces.
		\item The use of the Heisenberg group and the group-moment coordinates provided a unitary map $U_\lambda$ that uncovers an important fact: the weigthed Bergman spaces on $D_{n+1}$ are the direct integral of the family of all Fock spaces
		\[
			\cA^2_\lambda(D_{n+1}) \simeq 
				\int_{\R_+}^\oplus \cF^2_{2\xi}(\C^n) \dif \xi.
		\]
		This fact was already known (see for example \cite{SNVNilpotent}). However, this work unveils the source of this unitary equivalence: the action of the Heisenberg group on $D_{n+1}$. This is actually related to the fact that, as noted in the previous item, the first $n$ Cauchy-Riemann equations went through.
	\end{itemize}
	In conclusion, the use of group-moment coordinates associated to $\HH_n$ yield interesting formulas and facts while explaining their source.
\end{remark}

\section*{Statements and Declarations}
\textbf{Funding }This research was partially supported by a Conahcyt scholarship held by the first author, by SNI-Conahcyt and by Conahcyt Grants 280732 and 61517. \\
\textbf{Competing interests} The authors have no competing interests to declare that are relevant to the content of this article. \\
\textbf{Data Availability} Data sharing not applicable to this article as no datasets were generated or analyzed during the current study.


\begin{thebibliography}{XX}

\bibitem{AppuhamyLe2016} Appuhamy, Amila and Le, Trieu: \emph{Commutants of Toeplitz operators with separately radial polynomial symbols}. Complex Anal. Oper. Theory 10 (2016), no. 1, 1--12.

\bibitem{AxlerCuckovicRao2000} Axler, Sheldon; \v{C}u\v{c}kovi\'c, \v{Z}eljko and Rao, N. V.: \emph{Commutants of analytic Toeplitz operators on the Bergman space}. Proc. Amer. Math. Soc. 128 (2000), no. 7, 1951--1953.

\bibitem{BHVRadial2014} Bauer, Wolfram; Herrera Ya\~nez, Crispin and Vasilevski, Nikolai: \emph{Eigenvalue characterization of radial operators on weighted Bergman spaces over the unit ball}. Integral Equations Operator Theory 78 (2014), no.2, 271--300.

\bibitem{ChoeKooLee2004}  Choe, Boo Rim; Koo, Hyungwoon and Lee, Young Joo: \emph{Commuting Toeplitz operators on the polydisk}. Trans. Amer. Math. Soc. 356 (2004), no. 5, 1727--1749.

\bibitem{CuckovicLouhichi2008} \v{C}u\v{c}kovi\'c, \v{Z}eljko and Louhichi, Issam: \emph{Finite rank commutators and semicommutators of quasihomogeneous Toeplitz operators}. Complex Anal. Oper. Theory 2 (2008), no. 3, 429--439.

\bibitem{DOQJFA} Dawson, Matthew; \'Olafsson, Gestur and Quiroga-Barranco, Raul: \emph{Commuting   Toeplitz   operators   on   bounded symmetric   domains   and   multiplicity-free restrictions   of   holomorphic   discrete   series}. J. Funct. Anal. 268 (2015), no. 7, 1711--1732.

\bibitem{GKVRadial} Grudsky, S.; Karapetyants, A. and Vasilevski, N.: \emph{Toeplitz operators on the unit ball in $\C^n$ with radial symbols}. J. Operator Theory 49 (2003), no. 2, 325--346.

\bibitem{GKVHyperbolic} Grudsky, S., Karapetyants, A. and Vasilevski, N.: \emph{Dynamics of properties of Toeplitz operators on the upper half-plane: hyperbolic case}. Bol. Soc. Mat. Mexicana (3) 10 (2004), no. 1, 119--138.

\bibitem{GKVParabolic} Grudsky, S., Karapetyants, A. and Vasilevski, N.: \emph{Dynamics of properties of Toeplitz operators on the upper half-plane: parabolic case}. J. Operator Theory 52 (2004), no. 1, 185--214.

\bibitem{GMVRadial} Grudsky, Sergei M.; Maximenko, Egor A. and Vasilevski, Nikolai L.: \emph{Radial Toeplitz operators on the unit ball and slowly oscillating sequences}. Commun. Math. Anal. 14 (2013), no. 2, 77--94.

\bibitem{Helgason} Helgason, Sigurdur: Differential geometry, Lie groups, and symmetric spaces. Corrected reprint of the 1978 original. Graduate Studies in Mathematics, 34. American Mathematical Society, Providence, RI, 2001.

\bibitem{HHM2014VerticalWeighted} Herrera Ya\~nez, Crispin; Hutník, Ondrej and Maximenko, Egor A.: \emph{Vertical symbols, Toeplitz operators on weighted Bergman spaces over the upper half-plane and very slowly oscillating functions}. C. R. Math. Acad. Sci. Paris 352 (2014), no. 2, 129–132.

\bibitem{HMV2013Vertical} Herrera Ya\~nez, Crispin; Maximenko, Egor A. and Vasilevski, Nikolai: \emph{Vertical Toeplitz operators on the upper half-plane and very slowly oscillating functions}. Integral Equations Operator Theory 77 (2013), no. 2, 149--166.

\bibitem{KRvolII} Kadison, Richard V. and Ringrose, John R.: Fundamentals of the theory of operator algebras. Vol. II. Advanced theory. Corrected reprint of the 1986 original. Graduate Studies in Mathematics, 16. American Mathematical Society, Providence, RI, 1997. 

\bibitem{KnappBeyond2nd} Knapp, Anthony W.: Lie groups beyond an introduction. Second edition. Progress in Mathematics, 140. Birkh\"auser Boston, Inc., Boston, MA, 2002.

\bibitem{KNII} Kobayashi, Shoshichi and Nomizu, Katsumi: Foundations of differential geometry. Vol. II. Reprint of the 1969 original. Wiley Classics Library. A Wiley-Interscience Publication. John Wiley \& Sons, Inc., New York, 1996.

\bibitem{KorenblumZhu1995} Korenblum, Boris and Zhu, Ke He: \emph{An application of Tauberian theorems to Toeplitz operators}. J. Operator Theory 33 (1995), no. 2, 353--361.

\bibitem{Le2017} Le, Trieu: \emph{Commutants of separately radial Toeplitz operators in several variables}. J. Math. Anal. Appl. 453 (2017), no. 1, 48--63.

\bibitem{Mok} Mok, Ngaiming: Metric rigidity theorems on Hermitian locally symmetric manifolds. Series in Pure Mathematics, 6. World Scientific Publishing Co., Inc., Teaneck, NJ, 1989. 

\bibitem{QSMomentMapJFA} Quiroga-Barranco, Raul and Sanchez-Nungaray, Armando: \emph{Moment maps of Abelian groups and commuting Toeplitz operators acting on the unit ball}. J. Funct. Anal. 281 (2021), no. 3, Paper No. 109039, 50 pp.

\bibitem{QVUnitBall1} Quiroga-Barranco, Raul and Vasilevski, Nikolai: \emph{Commutative $C^*$-algebras of Toeplitz operators on the unit ball. I. Bargmann-type transforms and spectral representations of Toeplitz operators.} Integral Equations Operator Theory 59 (2007), no. 3, 379--419.

\bibitem{QVUnitBall2} Quiroga-Barranco, Raul and Vasilevski, Nikolai: \emph{Commutative $C^*$-algebras of Toeplitz operators on the unit ball. II. Geometry of the level sets of symbols.} Integral Equations Operator Theory 60 (2008), no. 1, 89--132.

\bibitem{SNVNilpotent} Sanchez-Nungaray, Armando and Vasilevski, Nikolai: \emph{Commutative algebras of Toeplitz operators on a Siegel domain associated with the nilpotent group of its biholomorphisms}. Recent trends in operator theory and partial differential equations, 275--300. Operator Theory: Advances and Applications, 258. Birkh\"auser/Springer, Cham, 2017.

\bibitem{UpmeierToepBook} Upmeier, Harald: Toeplitz operators and index theory in several complex variables. Operator Theory: Advances and Applications, 81. Birkh\"auser Verlag, Basel, 1996.

\end{thebibliography}
\end{document}